\begin{document}

\section{Introduction}

This paper continues our study of the family of Barnes beta probability\footnote{We use ``probability distribution'', ``law'', and ``random variable'' interchangeably in this paper.} distributions \(\beta_{M,N}\) on the unit interval \((0,1]\) that we initiated in \cite{Me} in the special case of \(M=N=2\) and developed in \cite{Me13} for general \(M\leq N\), \(M,N\in\mathbb{N}.\) Our primary motivation for introducing and studying Barnes beta distributions was their natural appearance in the meromorphic extension of the Selberg integral as a function of its dimension that we found in \cite{Me4} and \cite{Me} in our work on the law of the total mass of the limit lognormal stochastic measure on the unit interval. Aside from this particular application, they possess special properties that connect them with two areas of current interest in probability theory. In fact, the defining property of \(\beta_{M,N}\) is that its Mellin transform is given by a product of ratios of Barnes multiple gamma functions. Moreover, $-\log\beta_{M,N}$ is infinitely divisible on the non-negative real line. Hence, on the one hand, the study of this family of probability distributions is part of a general study of infinite divisibility in the context of special functions of analytic number theory complementing \cite{BiaPitYor}, \cite{Jacod}, \cite{LagRains}, \cite{NikYor}. On the other hand, \(\beta_{M,N}\) provides a natural generalization of Dufresne distributions,  confer \cite{ChaLet}, \cite{Duf10}, whose Mellin transform is of the form of a product of ratios of Euler's gamma functions, which corresponds to $M=1$ in our case. In addition to Barnes beta distributions being at the confluence of these two areas of probability, they have a rich mathematical structure that is of independent interest. Specifically, the Mellin transform of \(\beta_{M,N}\) satisfies several symmetries relating $\beta_{M,N}$ to $\beta_{M,N-1},$ $\beta_{M-1,N},$ and $\beta_{M-1,N-1}.$ It also satisfies a functional equation that leads to a remarkable infinite factorization of the Mellin transform (Shintani factorization) and to finite product formulas for the moments of \(\beta_{M,N}\) extending Selberg's formula.
The pole structure of the Mellin transform is complicated and depends on the rationality of the coefficients of \(\beta_{M,N}.\) In addition, the Laplace transform of \(\beta_{M,N}\) is given by an infinite series that gives an interesting generalization of the confluent hypergeometric function.

The contribution of this paper is threefold. First, we advance the general theory of Barnes beta distributions. We show that they possess a scaling invariance and give a new derivation of the Shintani factorization of the Mellin transform. We also show that this factorization implies a second infinite factorization of the Mellin transform (Barnes factorization), interpret it probabilistically,  
and give novel explicit formulas for the integral moments and mass at 1 of \(\beta_{M,N}\) as an application. 
In the special case of $M=N=2$ we prove that a certain product of ratios of double gamma functions is the Mellin transform of a probability
distribution, which factorizes into a product of \(\beta_{2,2}^{-1}s\) and a lognormal. This result gives a new and simpler approach to our key result in \cite{Me} on the extension of the Selberg integral as a function of its dimension to the Mellin transform of a probability distribution (conjectured to be the law of the total mass of the limit lognormal stochastic measure on the unit interval). 
We also show that our scaling invariance implies the invariance of the Mellin transform that was first noted in \cite{FLDR} and give it a novel probabilistic formulation.

Second, we study in three ways the relationship between the Barnes $G$ function,
Selberg integral, and Riemann xi function. This subject is of substantial interest in light of the work of \cite{KS} and, more recently, \cite{YK} that related these objects conjecturally. Our contribution is to show that Barnes beta distributions provide natural links that connect these objects. Specifically, first, by taking
a weak limit of the probability distribution underlying the Selberg integral
(confer above), we show that the Mellin transform of the resulting distribution
(conjectured to be the law of the derivative martingale, confer \cite{barraletal}, \cite{dupluntieratal}) is given by a product of ratios of Barnes $G$ functions. Second, we factorize this distribution into a product of a lognormal, \(\beta^{-1}_{2, 2}\), Pareto, and Fr\'echet distributions and observe that the L\'evy density of \(-\log\beta_{2, 2}\) is given by the Laplace transform of the $C_2$ distribution, confer \cite{BiaPitYor}, which is known to be directly linked to the xi function. In particular, this observation allows us to compute the cumulants of a class of \(-\log\beta_{2, 2}\) distributions in terms of the values of the xi function at the integers. Third,
motivated by \cite{BiaPitYor}, we introduce a new family of infinitely divisible distributions (extending the $S_2$ distribution, confer \cite{BiaPitYor}) and express it as the logarithm of a limit of Barnes beta distributions of the form
\(\beta_{2M,3M},\) \(M\rightarrow\infty,\) using Jacobi's triple product. We relate the Mellin transform of this family to the xi function by a functional equation thereby deriving a new approximation for xi.

Third, we contribute to the theory of Barnes multiple gamma functions. In the framework of the remarkable approach to multiple gamma functions due to Ruijsenaars \cite{Ruij}, we give new proofs of the scaling invariance, multiplication relation, and Shintani identity for these functions that play a key role in the proofs of our results on Barnes beta distributions. While these results are known in the classical case of the original Barnes normalization of gamma functions, confer \cite{mBarnes}, \cite{KataOhts}, \cite{Kuz}, and \cite{Shintani}, our formulation of them using the modern normalization is new. The classical normalization also unnecessarily complicated the proofs, while the approach of Ruijsenaars, which is based on his Malmst\'en-type formula for the log-gamma function, allows us to give elementary proofs that reduce to simple properties of multiple Bernoulli polynomials. In particular, our statement of the Shintani identity is simpler and more general  
than what is available in the existing literature. 

The plan of this paper is as follows. In Section 2 we give a brief review of Barnes multiple gamma functions based on \cite{Ruij} followed by our formulation of their scaling invariance, multiplication relation, and Shintani factorization. In Section 3 we give a review of the key properties of Barnes beta distributions that we established in \cite{Me13}. In Section 4 we state and prove our new results on general Barnes beta distributions and in Section 5 on the Selberg integral distribution. In Section 6 we study the critical case. In Section 7 we derive our approximation of the Riemann xi function. In Section 8 we give the proofs of the three properties of Barnes multiple gamma functions that are stated in Section 2. Section 9 concludes.

\section{A Review of Multiple Gamma Functions}

Let $f(t)$ be of the Ruijsenaars class, \emph{i.e.} analytic for
$\Re(t)>0$ and at $t=0$ and of at worst polynomial growth as
$t\rightarrow \infty,$ confer \cite{Ruij}, Section 2. The main
example that corresponds to the case of Barnes multiple gamma
functions is
\begin{equation}\label{fdef}
f(t) = t^M \prod\limits_{j=1}^M (1-e^{-a_j t})^{-1}
\end{equation}
for some integer $M\geq 0$ and parameters $a_j>0,$ $j=1\cdots M.$
For concreteness, the reader can assume with little loss of
generality that $f(t)$ is defined by \eqref{fdef}. Slightly
modifying the definition in \cite{Ruij}, we define multiple
Bernoulli polynomials by
\begin{equation}\label{Bdefa}
B^{(f)}_{m}(x) \triangleq \frac{d^m}{dt^m}|_{t=0} \bigl[f(t)
e^{-xt}\bigr]
\end{equation}
and in the case of \eqref{fdef} denote them by \(B_{M,m}(x\,|\,a).\)
The generalized zeta function is defined by
\begin{equation}\label{zdef}
\zeta^{(f)}_M(s, \,w) \triangleq \frac{1}{\Gamma(s)} \int\limits_0^\infty
t^{s-1} e^{-wt}\,f(t) \,\frac{dt}{t^M}, \,\,\Re(s)>M,\,\Re(w)>0.
\end{equation}
It is shown in \cite{Ruij} that $\zeta^{(f)}_M(s, \,w)$ has an analytic
continuation to a function that is meromorphic in $s\in\mathbb{C}$
with simple poles at $s=1, 2, \cdots M.$ The generalized log-gamma
function is then defined by
\begin{equation}\label{Ldef}
L^{(f)}_M(w) \triangleq \partial_s \zeta^{(f)}_M(s, \,w)|_{s=0}, \,\,\Re(w)>0.
\end{equation}
It can be analytically continued to a function that is holomorphic
over $\mathbb{C}-(-\infty, 0].$ The key result of \cite{Ruij} that
we need is summarized in the following theorem.
\begin{theorem}[Ruijsenaars]\label{R}
$L^{(f)}_M(w)$ satisfies the Malmst\'en-type formula for $\Re(w)>0,$
\begin{equation}\label{key}
L^{(f)}_M(w) = \int\limits_0^\infty \frac{dt}{t^{M+1}} \Bigl(
e^{-wt}\,f(t) - \sum\limits_{k=0}^{M-1} \frac{t^k}{k!}\,B^{(f)}_k(w)
- \frac{t^M\,e^{-t}}{M!}\, B^{(f)}_M(w)\Bigr).
\end{equation}
\end{theorem}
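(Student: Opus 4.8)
The plan is to compute the analytic continuation of $\zeta^{(f)}_M(s,w)$ near $s=0$ directly from \eqref{zdef}, extract $L^{(f)}_M(w)=\partial_s\zeta^{(f)}_M(s,w)|_{s=0}$, and match it against the right-hand side of \eqref{key}; the only non-formal ingredient is the classical integral representation of the Euler--Mascheroni constant $\gamma$. Fix $\Re(w)>0$ and write $r(t)$ for the function $e^{-wt}f(t)-\sum_{k=0}^{M-1}\frac{t^{k}}{k!}B^{(f)}_{k}(w)-\frac{t^{M}e^{-t}}{M!}B^{(f)}_{M}(w)$, i.e.\ $t^{M+1}$ times the integrand of \eqref{key}. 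Two elementary observations come first. Since $f$ is analytic at $t=0$, definition \eqref{Bdefa} gives $e^{-wt}f(t)=\sum_{k\ge0}\frac{B^{(f)}_{k}(w)}{k!}t^{k}$ near $0$, so all powers of degree $\le M$ cancel in $r(t)$ (replacing $t^{M}$ by $t^{M}e^{-t}$ perturbs only orders $>M$), whence $r(t)=O(t^{M+1})$ as $t\to0^{+}$; and as $t\to\infty$, $r(t)/t^{M+1}$ is a sum of exponentially small terms and of $\sum_{k=0}^{M-1}\frac{B^{(f)}_{k}(w)}{k!}t^{k-M-1}=O(t^{-2})$. Hence the integral in \eqref{key} converges absolutely; abbreviate $I_{0}=\int_{0}^{1}t^{-M-1}r(t)\,dt$ and $I_{\infty}=\int_{1}^{\infty}t^{-M-1}e^{-wt}f(t)\,dt$.

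Next I would split \eqref{zdef} at $t=1$. The tail $\int_{1}^{\infty}t^{s-M-1}e^{-wt}f(t)\,dt$ is entire and equals $I_{\infty}$ at $s=0$. In the head $\int_{0}^{1}$, valid a priori for $\Re(s)>M$, substitute $e^{-wt}f(t)=r(t)+\sum_{k=0}^{M-1}\frac{B^{(f)}_{k}(w)}{k!}t^{k}+\frac{B^{(f)}_{M}(w)}{M!}t^{M}e^{-t}$ and integrate term by term, using $\int_{0}^{1}t^{s-M-1+k}\,dt=\frac1{s-M+k}$ (a simple pole at $s=M-k\in\{1,\dots,M\}$ only), $\int_{0}^{1}t^{s-M-1}r(t)\,dt$ holomorphic for $\Re(s)>-1$ with value $I_{0}$ at $s=0$, and $\int_{0}^{1}t^{s-1}e^{-t}\,dt=\frac1s+\int_{0}^{1}t^{s-1}(e^{-t}-1)\,dt$, whose continuation has a simple pole of residue $1$ at $s=0$ and constant term $-\int_{0}^{1}\frac{1-e^{-t}}{t}\,dt$. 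Collecting terms, $\Gamma(s)\zeta^{(f)}_M(s,w)$ has near $s=0$ the Laurent expansion $\frac{B^{(f)}_{M}(w)/M!}{s}+D+O(s)$ with $D=\sum_{k=0}^{M-1}\frac{B^{(f)}_{k}(w)}{k!\,(k-M)}-\frac{B^{(f)}_{M}(w)}{M!}\int_{0}^{1}\frac{1-e^{-t}}{t}\,dt+I_{0}+I_{\infty}$. Multiplying by $1/\Gamma(s)=s+\gamma s^{2}+O(s^{3})$ gives $\zeta^{(f)}_M(s,w)=\frac{B^{(f)}_{M}(w)}{M!}+\bigl(D+\gamma\,\frac{B^{(f)}_{M}(w)}{M!}\bigr)s+O(s^{2})$, hence $L^{(f)}_M(w)=D+\gamma B^{(f)}_{M}(w)/M!$, that is,
\[
L^{(f)}_M(w)=\sum_{k=0}^{M-1}\frac{B^{(f)}_{k}(w)}{k!\,(k-M)}+\frac{B^{(f)}_{M}(w)}{M!}\Bigl(\gamma-\int_{0}^{1}\frac{1-e^{-t}}{t}\,dt\Bigr)+I_{0}+I_{\infty}.
\]

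Finally I would expand the right-hand side of \eqref{key}, namely $\int_{0}^{\infty}t^{-M-1}r(t)\,dt$, by the same split at $t=1$, now using $\int_{1}^{\infty}t^{k-M-1}\,dt=\frac1{M-k}$ for $0\le k<M$; this yields
\[
\int_{0}^{\infty}\frac{r(t)}{t^{M+1}}\,dt=\sum_{k=0}^{M-1}\frac{B^{(f)}_{k}(w)}{k!\,(k-M)}-\frac{B^{(f)}_{M}(w)}{M!}\int_{1}^{\infty}\frac{e^{-t}}{t}\,dt+I_{0}+I_{\infty}.
\]
Comparing the two displays, \eqref{key} is equivalent to $\gamma-\int_{0}^{1}\frac{1-e^{-t}}{t}\,dt=-\int_{1}^{\infty}\frac{e^{-t}}{t}\,dt$, i.e.\ to the standard identity $\gamma=\int_{0}^{1}\frac{1-e^{-t}}{t}\,dt-\int_{1}^{\infty}\frac{e^{-t}}{t}\,dt$. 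The routine points to verify along the way are the legitimacy of the term-by-term integrations, the stated holomorphy, and differentiating the continuation in $s$ at $s=0$ --- all consequences of local uniform convergence. I expect the main difficulty to be purely the bookkeeping around $s=0$: one must correctly combine the pole of $\Gamma(s)$ there with the pole of the regularized $\zeta$-integral, and in particular treat the $e^{-t}$-regularization of the degree-$M$ Taylor term consistently on $(0,1)$ and on $(1,\infty)$, since a slip there shifts the answer by a multiple of $\gamma$. (As a consistency check, both sides satisfy $\partial_w(\cdot)_M=-(\cdot)_{M-1}$, using $\partial_w B^{(f)}_{m}(w)=-m\,B^{(f)}_{m-1}(w)$ and $\partial_w\zeta^{(f)}_M(s,w)=-\zeta^{(f)}_{M-1}(s,w)$, which reduces the $w$-dependence to $M=0$, a Frullani integral.)
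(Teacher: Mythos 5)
The paper itself offers no proof of Theorem \ref{R}: it is stated as a quoted result of Ruijsenaars \cite{Ruij}, and the proofs in Section 8 only \emph{use} \eqref{key}. So there is no in-paper argument to compare against; what you have written is a self-contained reconstruction, and it is correct. Your route is essentially the classical one behind the Malmst\'en-type formula: split the Mellin integral at $t=1$, subtract the degree-$M$ Taylor data of $e^{-wt}f(t)$ (with the $e^{-t}$-regularized top term) on $(0,1)$, continue each piece analytically to a neighbourhood of $s=0$, and cancel the simple pole of $\Gamma(s)\zeta^{(f)}_M(s,w)$ against $1/\Gamma(s)=s+\gamma s^2+O(s^3)$. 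I checked the bookkeeping: the Laurent constant $D$, the value $\zeta^{(f)}_M(0,w)=B^{(f)}_M(w)/M!$, the evaluation $\int_1^\infty t^{k-M-1}dt=1/(M-k)$, and the final reduction of \eqref{key} to the standard identity $\gamma=\int_0^1\frac{1-e^{-t}}{t}\,dt-\int_1^\infty\frac{e^{-t}}{t}\,dt$ are all right, and the convergence statements ($r(t)=O(t^{M+1})$ at $0$, $O(t^{M-1})$ at $\infty$) justify the splittings. The only loose point is the closing parenthetical: $\partial_w\zeta^{(f)}_M(s,w)=-\zeta^{(f)}_{M-1}(s,w)$ involves a mild abuse of notation, since in the paper's convention the function $f$ in \eqref{fdef} carries its own index $M$ tied to the subscript; but this is only an optional consistency check and does not affect the argument.
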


In the special case of the function $f(t)$ being defined by
\eqref{fdef},\footnote{Whenever $f(t)$ is defined by
\eqref{fdef} or \eqref{fgdef} below, the index $M$ in \eqref{fdef} is always the same as the subscript $M$ in \eqref{zdef} and \eqref{Ldef}.} the generalized zeta and log-gamma functions, which in this case we denote by \(\zeta_M\bigl(s,\,w\,|\,a\bigr)\) and \(L_M(w\,|\,a)\) respectively, have important additional properties. It is not difficult to show that
\eqref{zdef} becomes
\begin{equation}\label{mzdef}
\zeta_M\bigl(s,\,w\,|\,a\bigr) =
\sum\limits_{k_1,\cdots,k_M=0}^\infty \bigl(w+k_1 a_1+\cdots+k_M
a_M\bigr)^{-s},\,\,\Re(s)>M,\,\Re(w)>0,
\end{equation}
for $a=(a_1,\cdots, a_M),$ which is the formula given originally by
Barnes \cite{mBarnes} for the multiple zeta function.
Now, following \cite{Ruij}, define the Barnes multiple gamma function by
\begin{equation}\label{mgamma}
\Gamma_M(w\,|\,a) \triangleq \exp\bigl(L_M(w\,|\,a)\bigr).
\end{equation}
It follows from \eqref{mzdef} and \eqref{mgamma} that
$\Gamma_M(w\,|\,a)$ satisfies the fundamental functional equation
\begin{equation}\label{feq}
\Gamma_{M}(w\,|\,a) =
\Gamma_{M-1}(w\,|\,\hat{a}_i)\,\Gamma_M\bigl(w+a_i\,|\,a\bigr),\,i=1\cdots
M, \,\,M=1, 2, 3\cdots,
\end{equation}
$\hat{a}_i = (a_1,\cdots, a_{i-1},\,a_{i+1},\cdots, a_{M}),$ and
$\Gamma_0(w) = 1/w,$ which is also due to \cite{mBarnes}.
By iterating \eqref{feq} one sees that $\Gamma_M(w\,|\,a)$ is
meromorphic over $\mathbb{C}$ having no zeroes and poles at
\begin{equation}\label{poles}
w=-(k_1 a_1+\cdots + k_M a_M),\; k_1\cdots k_M\in\mathbb{N},
\end{equation}
with multiplicity equal the number of $M-$tuples $(k_1, \cdots,
k_M)$ that satisfy \eqref{poles}.

We conclude our review of the Barnes functions by relating the
general results to the classical case of Euler's gamma and Hurwitz's
zeta functions. Following \cite{Ruij}, we have the identities
\begin{gather}
\Gamma_1(w\,|\,a) = \frac{a^{w/a-1/2}}{\sqrt{2\pi}} \,\Gamma(w/a), \label{gamma1}\\
\zeta_1(s, w\,|\,a) = a^{-s}\zeta(s,\,w/a).
\end{gather}

We now proceed to state our results. The proofs are deferred to Section 8 for reader's convenience.
\begin{theorem}[Scaling invariance]\label{scaling}
Let \(\Re(w)>0,\) \(\kappa>0\) and \((\kappa\,a)_i\triangleq\kappa\,a_i,\;i=1\cdots M.\)
\begin{equation}\label{scale}
\Gamma_M(\kappa w\,|\,\kappa a) = \kappa^{-B_{M,M}(w\,|\,a)/M!}\,\Gamma_M(w\,|\,a).
\end{equation}
\end{theorem}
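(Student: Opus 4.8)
The plan is to compute $L_M(\kappa w\,|\,\kappa a)$ directly from Ruijsenaars' Malmst\'en-type formula \eqref{key}, perform a single change of variable, and compare the result term-by-term with $L_M(w\,|\,a)$. Two elementary scaling identities for the data entering \eqref{key} are needed. First, the function $f$ of \eqref{fdef} built from $\kappa a$ satisfies $f_{\kappa a}(t)=\kappa^{-M}f_a(\kappa t)$, which is immediate since $f(t)=\prod_{j=1}^M \tfrac{t}{1-e^{-a_jt}}$. Second, the multiple Bernoulli polynomials scale as $B_{M,m}(\kappa w\,|\,\kappa a)=\kappa^{\,m-M}\,B_{M,m}(w\,|\,a)$; this follows by inserting the first identity into the generating-function form $f(t)e^{-xt}=\sum_{m\ge0}\tfrac{t^m}{m!}B^{(f)}_m(x)$ implied by \eqref{Bdefa} and matching Taylor coefficients at $t=0$. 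In particular the top polynomial is scale-invariant: $B_{M,M}(\kappa w\,|\,\kappa a)=B_{M,M}(w\,|\,a)$.

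Next I would write out \eqref{key} for $L_M(\kappa w\,|\,\kappa a)$ and substitute $u=\kappa t$, so that $\tfrac{dt}{t^{M+1}}=\kappa^{M}\tfrac{du}{u^{M+1}}$. Using the two scaling identities, the leading term $e^{-\kappa w t}f_{\kappa a}(t)$ and every subtracted polynomial term $\tfrac{t^k}{k!}B_{M,k}(\kappa w\,|\,\kappa a)$, $k=0,\dots,M-1$, transform into exactly the corresponding terms of the integrand defining $L_M(w\,|\,a)$: all compensating powers of $\kappa$ cancel against the Jacobian factor $\kappa^{M}$. The sole exception is the regularizing term $\tfrac{t^M e^{-t}}{M!}B_{M,M}(\kappa w\,|\,\kappa a)$: the Bernoulli polynomial is invariant and $t^M$ rescales correctly, but the fixed exponential $e^{-t}$ turns into $e^{-u/\kappa}$ rather than $e^{-u}$. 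Subtracting the two integral representations kills everything except this mismatch, leaving
\[
L_M(\kappa w\,|\,\kappa a)-L_M(w\,|\,a)=\frac{B_{M,M}(w\,|\,a)}{M!}\int_0^\infty\frac{e^{-u}-e^{-u/\kappa}}{u}\,du=-\frac{B_{M,M}(w\,|\,a)}{M!}\log\kappa
\]
by Frullani's integral. Exponentiating through \eqref{mgamma} gives \eqref{scale}.

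The routine checks are that the difference of the two integrands is genuinely integrable on $(0,\infty)$, so that the subtraction is legitimate (near $u=0$ the $1/u$ singularity is cancelled by $e^{-u}-e^{-u/\kappa}=O(u)$, and there is exponential decay at $\infty$), and that the substitution $u=\kappa t$ is valid on the half-plane $\Re(w)>0$ where \eqref{key} holds. The only genuine obstacle is bookkeeping: one must track the powers of $\kappa$ arising simultaneously from $f_{\kappa a}$, from each $B_{M,k}(\kappa w\,|\,\kappa a)$, and from the Jacobian, and confirm that they cancel in every term except the last. Once the Bernoulli scaling law $B_{M,m}(\kappa w\,|\,\kappa a)=\kappa^{m-M}B_{M,m}(w\,|\,a)$ is established this is mechanical, and the appearance of $B_{M,M}$ — the unique polynomial left ``uncompensated'' because it is scale-invariant — is precisely what produces the exponent in \eqref{scale}.
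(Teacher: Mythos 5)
Your proposal is correct and follows essentially the same route as the paper's proof: the two scaling identities $f(t\,|\,\kappa a)=\kappa^{-M}f(\kappa t\,|\,a)$ and $B_{M,m}(\kappa w\,|\,\kappa a)=\kappa^{m-M}B_{M,m}(w\,|\,a)$, the substitution $t'=\kappa t$ in the Malmst\'en-type formula \eqref{key}, and Frullani's integral to extract the $-\log\kappa$ from the mismatched regularizing term. Your write-up simply makes explicit the bookkeeping that the paper leaves to the reader.
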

\begin{remark}
In the case of classical normalization, this result appears to be due to \cite{KataOhts}.
It was re-discovered in \cite{Kuz} in the special case of $M=2.$
\end{remark}
\begin{theorem}[Barnes multiplication]\label{multiplic}
Let $\Re(w)>0$ and $k=1,2,3,\cdots.$
\begin{equation}
\Gamma_M(kw\,|\,a) = k^{-B_{M,
M}(kw\,|\,a)/M!}\,\prod\limits_{p_1,\cdots,p_M=0}^{k-1}\Gamma_M\Bigl(w+\frac{\sum_{j=1}^M
p_j a_j}{k}\,|\,a\Bigr).
\end{equation}
\end{theorem}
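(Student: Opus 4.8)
The plan is to obtain the multiplication formula from two ingredients: a splitting of the lattice $\mathbb{N}^M$ that underlies the Barnes multiple zeta function, and the scaling invariance of Theorem~\ref{scaling}. Throughout I fix an integer $k\geq1$, take $\Re(w)>0$, and abbreviate $a/k\triangleq(a_1/k,\cdots,a_M/k)$.

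\textbf{Step 1: a lattice identity for $\zeta_M$.} First I would note that for $\Re(s)>M$ the series \eqref{mzdef} converges absolutely, so it may be rearranged freely. Every $m=(m_1,\cdots,m_M)\in\mathbb{N}^M$ has a unique representation $m_j=p_j+kn_j$ with $p_j\in\{0,\cdots,k-1\}$ and $n_j\in\mathbb{N}$; since $w+\tfrac1k\sum_j p_j a_j+\sum_j n_j a_j=w+\tfrac1k\sum_j m_j a_j$, summing \eqref{mzdef} over $p_1,\cdots,p_M$ yields
\begin{equation*}
\sum_{p_1,\cdots,p_M=0}^{k-1}\zeta_M\Bigl(s,\,w+\tfrac1k\sum_{j=1}^M p_j a_j\,|\,a\Bigr)=\sum_{m\in\mathbb{N}^M}\Bigl(w+\tfrac1k\sum_{j=1}^M m_j a_j\Bigr)^{-s}=\zeta_M\bigl(s,\,w\,|\,a/k\bigr),
\end{equation*}
all terms being legitimate because $\Re\bigl(w+\tfrac1k\sum_j p_j a_j\bigr)>0$. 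By Ruijsenaars' continuation (quoted just before \eqref{mzdef}) both sides extend to functions meromorphic in $s$ whose only poles lie in $\{1,\cdots,M\}$, so the identity holds on $\mathbb{C}\setminus\{1,\cdots,M\}$; in particular it is an identity of functions holomorphic near $s=0$. I would then apply $\partial_s|_{s=0}$ (which commutes with the finite sum over $p$), use $L_M=\partial_s\zeta_M|_{s=0}$ from \eqref{Ldef}, and exponentiate via \eqref{mgamma}, to arrive at
\begin{equation}\label{multlattice}
\prod_{p_1,\cdots,p_M=0}^{k-1}\Gamma_M\Bigl(w+\frac{\sum_{j=1}^M p_j a_j}{k}\,|\,a\Bigr)=\Gamma_M\bigl(w\,|\,a/k\bigr).
\end{equation}

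\textbf{Step 2: scaling, and conclusion.} Next I would apply Theorem~\ref{scaling} with $\kappa=1/k$ to $\Gamma_M(kw\,|\,a)$, which (since $\tfrac1k(kw)=w$ and $\tfrac1k a=a/k$) reads
\begin{equation*}
\Gamma_M\bigl(w\,|\,a/k\bigr)=\Gamma_M\Bigl(\tfrac1k(kw)\,|\,\tfrac1k a\Bigr)=k^{B_{M,M}(kw\,|\,a)/M!}\,\Gamma_M(kw\,|\,a).
\end{equation*}
Substituting this into \eqref{multlattice} and solving for $\Gamma_M(kw\,|\,a)$ gives precisely the asserted identity.

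\textbf{Anticipated obstacle.} There is no deep difficulty here; the two points that need care are the term-by-term rearrangement in Step~1 (immediate from absolute convergence on $\Re(s)>M$) and the legitimacy of differentiating the identity at $s=0$, which rests on Ruijsenaars' meromorphic continuation confining all poles to $\{1,\cdots,M\}$. If one wished to avoid Theorem~\ref{scaling}, one could instead feed the generating-function identity $\sum_p f(t)\,e^{-(w+\frac1k\sum_j p_j a_j)t}=\widetilde f(t)\,e^{-wt}$, where $\widetilde f$ is \eqref{fdef} with $a$ replaced by $a/k$, into the Malmst\'en-type formula \eqref{key}; the $M$ subtraction terms are then controlled by the homogeneity $B_{M,m}(\kappa w\,|\,\kappa a)=\kappa^{m-M}B_{M,m}(w\,|\,a)$ of the multiple Bernoulli polynomials together with the value $\zeta_M(0,w\,|\,a)=B_{M,M}(w\,|\,a)/M!$, but the route above is shorter.
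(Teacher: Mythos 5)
Your proposal is correct, and its route differs from the one taken in the paper. You work at the level of the Barnes series \eqref{mzdef}: the unique decomposition $m_j=p_j+kn_j$ gives $\sum_{p}\zeta_M\bigl(s,w+\tfrac1k\sum_j p_ja_j\,|\,a\bigr)=\zeta_M(s,w\,|\,a/k)$ for $\Re(s)>M$, you transport this through Ruijsenaars' continuation to $s=0$ to obtain $\prod_p\Gamma_M\bigl(w+\tfrac1k\sum_j p_ja_j\,|\,a\bigr)=\Gamma_M(w\,|\,a/k)$, and you then produce the prefactor $k^{-B_{M,M}(kw|a)/M!}$ by invoking Theorem~\ref{scaling} with $\kappa=1/k$ (legitimate and non-circular, since the paper proves Theorem~\ref{scaling} independently). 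The paper instead never leaves the Malmst\'en representation \eqref{key}: it sums the formulas for $L_M$ over $p_1,\cdots,p_M$, controls the subtraction terms with the finite identity $\sum_p B_{M,m}\bigl(w+\tfrac1k\sum_j p_ja_j\,|\,a\bigr)=k^{M-m}B_{M,m}(kw\,|\,a)$ (a direct consequence of \eqref{Bdefa}), and extracts the $\log k$ via Frullani's integral, in effect redoing the rescaling step by the change of variables rather than citing Theorem~\ref{scaling}. Your version buys a transparent structural reading -- multiplication is exactly lattice refinement composed with rescaling -- at the modest cost of needing the series representation \eqref{mzdef} and an analytic-continuation/identity-theorem argument to reach $s=0$; the paper's version is self-contained within the integral-representation framework and uniform in method with its proofs of Theorems~\ref{scaling} and \ref{ShinFactor}, requiring only elementary properties of the multiple Bernoulli polynomials. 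The alternative you sketch in your closing remark (feeding the summed integrand into \eqref{key} and using the homogeneity of $B_{M,m}$) is essentially the paper's argument.
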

\begin{remark}
In the classical case, this result is due to \cite{mBarnes}.
\end{remark}
\begin{theorem}[Shintani factorization]\label{ShinFactor}
Let $g(t)$ be a function of Ruijsenaars class. Define a sequence of functions \(f(t)\) that are parameterized by \(M\in\mathbb{N}\) and $a_j>0$
\begin{equation}\label{fgdef}
f(t)\triangleq  \frac{t^M}{\prod\limits_{j=1}^M 1-e^{-a_j t}}\,g(t)
\end{equation}
and denote the corresponding gamma function by \(\Gamma^{(f)}_M(w\,|\,a)\) and Bernoulli polynomials by \(B^{(f)}_{M,m}(x\,|\,a),\) \(a=(a_1\cdots a_M).\)
Let the function \(\Psi^{(f)}_{M+1}(x,y\,|\,a)\) 
be defined for \(\Re(x),\Re(y)>0\) by
\begin{align}
\Psi^{(f)}_{M+1}(x,y\,|\,a) & \triangleq \int\limits_0^\infty \frac{dt}{t^{M+1}}\Bigl[\sum\limits_{m=0}^M \frac{t^m}{m!}B^{(f)}_{M,m}(x\,|\,a)e^{-yt} - \sum\limits_{m=0}^{M-1}\frac{t^m}{m!}B^{(f)}_{M,m}(x+y\,|\,a) - \nonumber \\ & - \frac{t^M}{M!}B^{(f)}_{M,M}(x+y\,|\,a)e^{-t}\Bigr]+\frac{1}{y(M+1)!}B^{(f)}_{M,M+1}(x\,|\,a). \label{Psidef}
\end{align}
Given arbitrary $x>0,$ there exists a function $\phi^{(f)}_{M+1}\bigl(w,x\,|\,a, a_{M+1}\bigr)$ such that
\begin{align}
\Gamma^{(f)}_{M+1}\bigl(w\,|\,a,a_{M+1}\bigr) = & \prod\limits_{k=1}^\infty \frac{\Gamma^{(f)}_M(w+ka_{M+1}\,|\,a)}{\Gamma^{(f)}_M(x+ka_{M+1}\,|\,a)}e^{\Psi^{(f)}_{M+1}(x,ka_{M+1}\,|\,a)-\Psi^{(f)}_{M+1}(w,ka_{M+1}\,|\,a)} \star \nonumber \\
&\star\exp{\bigl(\phi^{(f)}_{M+1}(w,x\,|\,a, a_{M+1})\bigr)}\, \Gamma^{(f)}_{M}(w\,|\,a). \label{generalfactorization}
\end{align}
\(\Psi^{(f)}_{M+1}(w,y\,|\,a)\) and \(\phi^{(f)}_{M+1}(w,x\,|\,a, a_{M+1})\) are polynomials in $w$ of degree $M+1.$
\end{theorem}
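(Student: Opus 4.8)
The plan is to reduce everything to Ruijsenaars's Malmst\'en formula (Theorem~\ref{R}). Write $f(t)$ for the function~\eqref{fgdef} at level $M$ with parameters $a=(a_1,\dots,a_M)$; the analogous level-$(M+1)$ function with parameters $(a,a_{M+1})$, which I denote $f_{M+1}(t)$, is then $\frac{t}{1-e^{-a_{M+1}t}}f(t)=t\,f(t)\sum_{k\ge 0}e^{-ka_{M+1}t}$. This forces $\zeta^{(f)}_{M+1}(s,w\,|\,a,a_{M+1})=\sum_{k\ge 0}\zeta^{(f)}_M(s,w+ka_{M+1}\,|\,a)$ for $\Re(s)>M+1$; since the right-hand side telescopes under $w\mapsto w+a_{M+1}$, meromorphic continuation and differentiation at $s=0$ give the functional equation $L^{(f)}_{M+1}(w\,|\,a,a_{M+1})-L^{(f)}_M(w\,|\,a)=L^{(f)}_{M+1}(w+a_{M+1}\,|\,a,a_{M+1})$, which accounts for the isolated factor $\Gamma^{(f)}_M(w\,|\,a)$ in~\eqref{generalfactorization}. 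The heart of the matter is the identity obtained by writing the Malmst\'en formula for $L^{(f)}_M(w+ka_{M+1}\,|\,a)$ and the definition~\eqref{Psidef} of $\Psi^{(f)}_{M+1}$ side by side: all the degree-$\le M$ Taylor-in-$t$ subtractions cancel term by term, so that, with $R_M(t,w)\triangleq f(t)e^{-wt}-\sum_{m=0}^M\frac{t^m}{m!}B^{(f)}_{M,m}(w\,|\,a)=O(t^{M+1})$ and $T_k(w)\triangleq L^{(f)}_M(w+ka_{M+1}\,|\,a)-\Psi^{(f)}_{M+1}(w,ka_{M+1}\,|\,a)$,
\begin{equation*}
T_k(w)=\int_0^\infty\frac{dt}{t^{M+1}}\,e^{-ka_{M+1}t}\,R_M(t,w)-\frac{B^{(f)}_{M,M+1}(w\,|\,a)}{k\,a_{M+1}(M+1)!};
\end{equation*}
in other words $\Psi^{(f)}_{M+1}$ is engineered precisely so that the generic factor $\Gamma^{(f)}_M(w+ka_{M+1})\,e^{-\Psi^{(f)}_{M+1}(w,ka_{M+1})}$ takes this form.

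The main step is then to show the product in~\eqref{generalfactorization} converges, i.e.\ that $\sum_{k\ge 1}\bigl(T_k(w)-T_k(x)\bigr)$ converges. Summing the displayed identity over $k\le K$ replaces $e^{-ka_{M+1}t}$ by $(1-e^{-Ka_{M+1}t})/(e^{a_{M+1}t}-1)$; since $\frac{1}{t^{M+1}(e^{a_{M+1}t}-1)}\bigl(R_M(t,w)-R_M(t,x)\bigr)$ has a simple pole at $t=0$ with residue $c_{M+1}(w)-c_{M+1}(x)$, where $c_{M+1}(w)\triangleq B^{(f)}_{M,M+1}(w\,|\,a)/(a_{M+1}(M+1)!)$, the cutoff factor $(1-e^{-Ka_{M+1}t})$ turns the integral into $(c_{M+1}(w)-c_{M+1}(x))\log K+O(1)$ by the exponential-integral asymptotic $\int_0^{\epsilon}\frac{1-e^{-Ka_{M+1}t}}{t}\,dt=\log K+\gamma+O(1)$, which exactly cancels the $-\bigl(c_{M+1}(w)-c_{M+1}(x)\bigr)\sum_{k\le K}1/k$ term; the remainder converges. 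The single rational summand $B^{(f)}_{M,M+1}(w\,|\,a)/(y(M+1)!)$ in~\eqref{Psidef} is exactly what makes this cancellation possible, and this convergence analysis is the step I expect to be the principal obstacle: one must track how the polynomial-in-$t$ subtractions built into $\Psi^{(f)}_{M+1}$ simultaneously regularize each integral $\int_0^\infty t^{-M-1}e^{-ka_{M+1}t}R_M(t,w)\,dt$ at $t=0$ while their resummation over $k$ manufactures a $t\to 0$ singularity whose logarithmic divergence must be absorbed by the harmonic partial sum.

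With convergence established I would set $\phi^{(f)}_{M+1}(w,x\,|\,a,a_{M+1})\triangleq L^{(f)}_{M+1}(w\,|\,a,a_{M+1})-L^{(f)}_M(w\,|\,a)-\sum_{k\ge 1}\bigl(T_k(w)-T_k(x)\bigr)$, so that~\eqref{generalfactorization} holds by construction after exponentiation; it remains to prove the polynomiality. For $\Psi^{(f)}_{M+1}(w,y\,|\,a)$ this is immediate: power by power in $w$ its integrand is $O(t^{M+1})$ at $t=0$ (the cancellation above is a polynomial identity in $w$) and $O(t^{M-1})$ as $t\to\infty$, so one integrates term by term to get a polynomial of degree $\le M$, to which the explicit degree-$(M+1)$ summand is added. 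For $\phi$ I would feed the functional equation of the first paragraph back into Malmst\'en via $e^{-(w+a_{M+1})t}f_{M+1}(t)=\beta(t)\bigl(\sum_{m=0}^M\frac{t^m}{m!}B^{(f)}_{M,m}(w\,|\,a)+R_M(t,w)\bigr)$ with $\beta(t)\triangleq t/(e^{a_{M+1}t}-1)$, and cancel the logarithmically divergent (suitably $\delta$-regularized) $R_M$-integral against the identical one buried inside $\sum_{k\ge 1}(T_k(w)-T_k(x))$; what remains is, up to a quantity depending only on $x$, a $\delta$-regularized integral of a manifestly polynomial-in-$w$ integrand of degree $\le M+1$ plus the degree-$(M+1)$ polynomial $-c_{M+1}(w)\log a_{M+1}$, so that $\phi$ is a polynomial of degree $M+1$ in $w$. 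This last identification is routine once the bookkeeping of the regularized integrals is organised as above, all of their non-polynomial $w$-dependence having been arranged to cancel.
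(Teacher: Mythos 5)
Your proposal is correct and follows essentially the same route as the paper: the identity $T_k(w)=\int_0^\infty t^{-M-1}e^{-ka_{M+1}t}R_M(t,w)\,dt - B^{(f)}_{M,M+1}(w\,|\,a)/(k\,a_{M+1}(M+1)!)$, the harmonic-sum/Frullani absorption of the $\log K$ divergence with dominated convergence for the remainder, and the identification of $\phi$ by cancelling the common non-polynomial integral $\int_0^\infty \frac{R_M(t,w)\,dt}{t^{M+1}(e^{a_{M+1}t}-1)}$ are exactly the content of Lemma \ref{mainlemma} (where that integral is packaged, with its convergent subtraction, as $\chi^{(f)}_{M+1}$) and of the paper's proof via $P^{(f)}_{M+1}=L^{(f)}_{M+1}-\chi^{(f)}_{M+1}-L^{(f)}_M$ in \eqref{Pdef}--\eqref{phiformula}. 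The only cosmetic differences are that the paper establishes polynomiality of $\Psi^{(f)}_{M+1}$ through the explicit closed form \eqref{Psiorig} rather than your coefficient-wise integrability argument, and works with the convergent $\chi^{(f)}_{M+1}$ instead of cutoff-regularized divergent integrals.
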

\begin{remark}
The case of $g(t)=1,$ which corresponds to Barnes multiple gamma functions, 
was first treated using classical normalization in \cite{Shintani} for $M=1$ and in \cite{KataOhts} in general. We gave a new proof for $M=1$  along the lines of the approach taken in this paper (using the classical normalization) in \cite{Me}. In all of these papers the variable $x$ was taken  to be $a_1$. The equivalence of \eqref{Psidef} and the definition of \(\Psi^{(f)}_{M+1}(x,y\,|\,a)\) in \cite{KataOhts} is shown in Section 8, where we also give an explicit formula for the function \(\phi^{(f)}_{M+1}(w,x\,|\,a,a_{M+1}).\)
\end{remark}

\section{A Review of Barnes Beta Distributions}
In this section we will give a review of key results on the existence and properties of Barnes beta distributions that we established in \cite{Me13}.
Let $\lbrace b_k\rbrace,$ $k\in\mathbb{N}$ be a sequence of positive
real numbers and $N, M\in\mathbb{N}.$ Let the symbol $\sum_{k_1<\cdots<k_p=1}^N$ denote the sum
over all indices $k_i=1\cdots N,$ $i=1\cdots p,$ satisfying
$k_1<\cdots<k_p.$ Define the action of the combinatorial operator $\mathcal{S}_N$ on a function \(h(x)\) by
\begin{definition}\label{Soperator}
\begin{equation}\label{S}
(\mathcal{S}_Nh)(q\,|\,b) \triangleq \sum\limits_{p=0}^N (-1)^p
\sum\limits_{k_1<\cdots<k_p=1}^N
h\bigl(q+b_0+b_{k_1}+\cdots+b_{k_p}\bigr).
\end{equation}
\end{definition}
\noindent In other words, in \eqref{S} the action of $\mathcal{S}_N$
is defined as an alternating sum over all combinations of $p$
elements for every $p=0\cdots N.$ Given a function $f(t)$ of
Ruijsenaars class, confer Section 2, such that $f(t)>0$ for $t\geq
0,$ let $L^{(f)}_M(w)$ be the corresponding generalized log-gamma function
defined in \eqref{Ldef}. The main example is the function $f(t)$ in
\eqref{fdef} so that $L^{(f)}_M(w)=L_M(w\,|\,a)$ is the Barnes multiple
log-gamma function.
\begin{definition}\label{bdef}
Given $q\in\mathbb{C}-(-\infty, -b_0],$ let
\begin{equation}\label{eta}
\eta^{(f)}_{M,N}(q\,|\,b) \triangleq \exp\Bigl(\bigl(\mathcal{S}_N
L^{(f)}_M\bigr)(q\,|\,b) - \bigl(\mathcal{S}_N L^{(f)}_M\bigr)(0\,|\,b)\Bigr).
\end{equation}
\end{definition}
The function $\eta^{(f)}_{M,N}(q\,|\,b)$ is holomorphic over
$q\in\mathbb{C}-(-\infty, -b_0]$ and equals a product of ratios of
generalized gamma functions by construction.
\begin{theorem}[Existence]\label{main}
Given $M, N\in\mathbb{N}$ such that $M\leq N,$ the function
$\eta^{(f)}_{M,N}(q\,|\,b)$ is the Mellin transform of a probability
distribution on $(0, 1].$ Denote it by $\beta^{(f)}_{M, N}(b).$ Then,
\begin{equation}
{\bf E}\bigl[\beta^{(f)}_{M, N}(b)^q\bigr] = \eta^{(f)}_{M, N}(q\,|\,b),\;
\Re(q)>-b_0.
\end{equation}
The distribution $-\log\beta^{(f)}_{M, N}(b)$ is infinitely divisible on
$[0, \infty)$ and has the L\'evy-Khinchine decomposition for $\Re(q)<b_0,$
\begin{equation}\label{LKH}
{\bf E}\Bigl[\exp\bigl(-q\log\beta^{(f)}_{M, N}(b)\bigr)\Bigr] =
\exp\Bigl(\int\limits_0^\infty (e^{tq}-1) e^{-b_0
t}\prod\limits_{j=1}^N (1-e^{-b_j t}) \frac{f(t)}{t^{M+1}} dt\Bigr).
\end{equation}
\end{theorem}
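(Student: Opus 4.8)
The plan is to compute $\log\eta^{(f)}_{M,N}(q\,|\,b)$ explicitly as a single absolutely convergent integral by feeding Ruijsenaars' Malmst\'en-type formula \eqref{key} into the definition \eqref{eta}, to recognize the result as the Laplace exponent of a driftless subordinator, and then to read off all three assertions of the theorem. Write $S_c$ for the shift operator $h(w)\mapsto h(w+c)$; the point of departure is the factorization $\mathcal{S}_N=S_{b_0}\prod_{j=1}^N(1-S_{b_j})$, which follows by expanding the product and comparing with \eqref{S}.

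First I would apply $\mathcal{S}_N$ to each term of \eqref{key}. Two elementary facts drive the computation: (i) $\mathcal{S}_N\bigl[e^{-wt}\bigr](q)=e^{-b_0t}\prod_{j=1}^N(1-e^{-b_jt})\,e^{-qt}$, since each factor $1-S_{b_j}$ multiplies $e^{-wt}$ by $1-e^{-b_jt}$; and (ii) $\prod_{j=1}^N(1-S_{b_j})$ lowers the degree of a polynomial by $N$, hence annihilates every polynomial of degree $<N$ and sends a polynomial of degree $N$ to a constant. Because the Bernoulli polynomials $B^{(f)}_k(w)$ in \eqref{key} have degree $k\le M\le N$, the terms with $k\le M-1$ are killed outright, and the degree-$M$ term is killed when $M<N$ and reduced to a $q$-independent constant when $M=N$; that constant cancels against the subtraction of $\bigl(\mathcal{S}_N L^{(f)}_M\bigr)(0\,|\,b)$ in \eqref{eta}. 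Interchanging the finite sum $\mathcal{S}_N$ with the integral (legitimate because, for $\Re(q)>-b_0$, every shift $q+b_0+b_{k_1}+\cdots$ has positive real part so each of the finitely many Malmst\'en integrals converges, and after cancellation the combined integrand is absolutely integrable), I would arrive at
\[
\log\eta^{(f)}_{M,N}(q\,|\,b)=\int_0^\infty(e^{-qt}-1)\,e^{-b_0t}\prod_{j=1}^N(1-e^{-b_jt})\,\frac{f(t)}{t^{M+1}}\,dt,\qquad\Re(q)>-b_0.
\]

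Next I would check that $K(t)\triangleq e^{-b_0t}\prod_{j=1}^N(1-e^{-b_jt})f(t)/t^{M+1}$ is a genuine L\'evy density on $(0,\infty)$: it is non-negative because $f(t)>0$ and $b_j>0$, and $\int_0^\infty\min(1,t)K(t)\,dt<\infty$. The bound at infinity is immediate from the factor $e^{-b_0t}$ and the polynomial growth of $f$; the bound at the origin is exactly where $M\le N$ is used, since $f(t)\prod_{j=1}^N(1-e^{-b_jt})\sim f(0)\bigl(\prod_{j=1}^N b_j\bigr)t^N$ as $t\to0$, so $t\,K(t)=O(t^{\,N-M})$ with $N-M\ge 0$. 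The displayed formula is therefore the Laplace exponent (in the variable $q$) of a driftless subordinator with L\'evy measure $K(t)\,dt$, so there is a non-negative infinitely divisible random variable $X$ with ${\bf E}[e^{-qX}]=\eta^{(f)}_{M,N}(q\,|\,b)$ for $\Re(q)>-b_0$; setting $\beta^{(f)}_{M,N}(b)\triangleq e^{-X}$ gives a variable on $(0,1]$ with the asserted Mellin transform, and the L\'evy--Khinchine representation \eqref{LKH} for $-\log\beta^{(f)}_{M,N}(b)=X$ is precisely the displayed identity with $q\mapsto-q$, valid for $\Re(q)<b_0$ by the exponential decay of $K$ at rate $b_0$.

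The main obstacle is the bookkeeping in the first step. One has to be sure the polynomial corrections in \eqref{key} are annihilated by $\mathcal{S}_N$ modulo constants — which needs the precise degrees of the $B^{(f)}_{M,m}$ from \eqref{Bdefa} together with the degree-lowering property of $\prod_{j=1}^N(1-S_{b_j})$ — and, more delicately, that the \emph{individual} integrals appearing before one combines them do converge at $t=0$. In the borderline case $M=N$ this forces the $t^N$-coefficients of $f(t)\prod_{j=1}^N(1-e^{-b_jt})$ and of $\tfrac{t^N}{N!}\,\mathcal{S}_N B^{(f)}_{M,M}$ to match, which in turn reduces to extracting the leading coefficient $B^{(f)}_{M,M}(w)=f(0)(-w)^M+\cdots$ from \eqref{Bdefa} and evaluating $\prod_{j=1}^N(1-S_{b_j})w^N=(-1)^N N!\,b_1\cdots b_N$. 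Once these routine but slightly fiddly verifications are carried out, the identification with a subordinator — and hence existence of $\beta^{(f)}_{M,N}(b)$, its moment formula, and the infinite divisibility of its negative logarithm — is automatic.
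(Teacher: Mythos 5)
Your argument is correct and is essentially the paper's own route: Theorem \ref{main} is stated here as a review and its proof is deferred to \cite{Me13}, where it is obtained exactly as you do, by applying $\mathcal{S}_N$ to Ruijsenaars' formula \eqref{key}, using that $\mathcal{S}_N$ annihilates polynomials of degree $<N$ (the content of Lemma \ref{MyLemma}), and recognizing $\log\eta^{(f)}_{M,N}(q\,|\,b)$ as the Laplace exponent of a subordinator with L\'evy density $e^{-b_0 t}\prod_{j=1}^N(1-e^{-b_j t})f(t)/t^{M+1}$. The genuinely delicate points — the degree count for $B^{(f)}_k$, the borderline case $M=N$ where the surviving constant cancels against $(\mathcal{S}_N L^{(f)}_M)(0\,|\,b)$, integrability at $t=0$ via $N\geq M$, and the extension of the Laplace-exponent identity to $\Re(q)>-b_0$ from the exponential decay rate $b_0$ — are all correctly identified and handled in your write-up.
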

\begin{corollary}[Structure]\label{Structure}
$\log\beta^{(f)}_{M,N}(b)$ is absolutely continuous if and only if $M=N.$ If $M<N,$
$-\log\beta^{(f)}_{M,N}(b)$ is compound Poisson and
\begin{subequations}
\begin{align}
{\bf P}\bigl[\beta^{(f)}_{M,N}(b)=1\bigr] & =
\exp\Bigl(-\int\limits_0^\infty e^{-b_0 t}\prod\limits_{j=1}^N
(1-e^{-b_j t}) \frac{f(t)}{t^{M+1}} dt\Bigr), \label{Pof1} \\
& = \exp\bigl(-(\mathcal{S}_N L^{(f)}_M)(0\,|\,b)\bigr). \label{Pof12}
\end{align}
\end{subequations}
\end{corollary}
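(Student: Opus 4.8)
The plan is to read everything off the L\'evy--Khinchine representation in Theorem~\ref{main}: $-\log\beta^{(f)}_{M,N}(b)$ is the time-one value of a driftless subordinator with L\'evy density $\rho(t)=e^{-b_0 t}\prod_{j=1}^N(1-e^{-b_j t})\,f(t)/t^{M+1}$ on $(0,\infty)$, and there is no Gaussian part. Two features of $\rho$ govern the argument. First, since $f$ is of Ruijsenaars class with $f>0$ on $[0,\infty)$ we have $f(0)>0$, and as $t\to0^+$, $\rho(t)\sim C\,t^{\,N-M-1}$ with $C=b_1\cdots b_N\,f(0)>0$. Second, because $\prod_j(1-e^{-b_jt})\le1$, $b_0>0$, and $f$ has at most polynomial growth, $\rho$ decays exponentially at $+\infty$; in particular $\rho$ is integrable on $(\delta,\infty)$ for every $\delta>0$.

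Suppose first $M<N$. Then $N-M-1\ge0$, so $\rho$ is bounded near $0$ and $\lambda:=\int_0^\infty\rho(t)\,dt<\infty$: the L\'evy measure is finite, so $-\log\beta^{(f)}_{M,N}(b)$ is compound Poisson with jump rate $\lambda$, whence $\mathbf P[-\log\beta^{(f)}_{M,N}(b)=0]=e^{-\lambda}$, which is \eqref{Pof1}. To obtain \eqref{Pof12} I would identify $\lambda=(\mathcal S_N L^{(f)}_M)(0\,|\,b)$ from the Malmst\'en formula \eqref{key}. Writing $\mathcal S_N=\bigl(\prod_{j=1}^N(I-T_{b_j})\bigr)\circ T_{b_0}$, where $T_a$ is the shift $h(\cdot)\mapsto h(\cdot+a)$, the operator $\mathcal S_N$ annihilates every polynomial in $w$ of degree $\le N-1$. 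The integral in \eqref{key} is absolutely convergent (the subtracted Bernoulli terms are exactly the Taylor correction making the integrand $O(t^{M+1})$ at $0$), so the finite alternating sum $\mathcal S_N$ passes inside; the corrections $B^{(f)}_k(w)$, $0\le k\le M$, all have degree $\le M\le N-1$ and are killed, while $\mathcal S_N\bigl(f(t)e^{-wt}\bigr)=f(t)e^{-(w+b_0)t}\prod_{j=1}^N(1-e^{-b_jt})$; evaluating at $w=0$ yields $(\mathcal S_N L^{(f)}_M)(0\,|\,b)=\int_0^\infty e^{-b_0t}\prod_{j=1}^N(1-e^{-b_jt})f(t)\,t^{-M-1}\,dt=\lambda$, as needed.

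Now suppose $M=N$. Then $\rho(t)\sim C/t$ near $0$, so $\int_0^\delta\rho=\infty$ and the L\'evy measure is infinite; in particular $-\log\beta^{(f)}_{M,M}(b)$ has no atom at $0$, i.e. $\mathbf P[\beta^{(f)}_{M,M}(b)=1]=0$. This alone does not give absolute continuity --- \textbf{and this is the step I expect to be the main obstacle}, since an infinitely divisible law with infinite L\'evy measure can still be singular. I would overcome it by extracting an explicit absolutely continuous summand: using that $\rho$ is continuous and strictly positive on $(0,\infty)$, that $t\rho(t)\to C>0$ as $t\to0^+$, and that (for $f$ as in \eqref{fdef}, where $t\rho(t)\sim e^{-b_0t}$ as $t\to\infty$, and more generally whenever $\rho$ does not decay faster than some exponential) one can choose $0<c'<C$ and $b>b_0$ with $\rho(t)-c'e^{-bt}/t\ge0$ for all $t>0$. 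Splitting the L\'evy measure as $\rho(t)\,dt=\bigl(c'e^{-bt}/t\bigr)dt+\bigl(\rho(t)-c'e^{-bt}/t\bigr)dt$ exhibits $-\log\beta^{(f)}_{M,M}(b)$ as the independent sum of a $\mathrm{Gamma}(c',b)$ variable (whose L\'evy density is $c'e^{-bt}/t$, by the Frullani integral) and another nonnegative infinitely divisible variable; convolution with the absolutely continuous Gamma law makes $-\log\beta^{(f)}_{M,M}(b)$, hence $\log\beta^{(f)}_{M,M}(b)$ and $\beta^{(f)}_{M,M}(b)$, absolutely continuous.

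Finally, the ``only if'' direction is immediate: if $\log\beta^{(f)}_{M,N}(b)$ is absolutely continuous then $\mathbf P[\beta^{(f)}_{M,N}(b)=1]=0$, which by the $M<N$ computation above forces $M\not<N$, hence $M=N$. The delicate point throughout is the absolute-continuity half of the $M=N$ case; everything else is a routine reading of Theorem~\ref{main} combined with the difference-operator identity for $\mathcal S_N$.
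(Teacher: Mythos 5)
Your proposal cannot be compared line-by-line with a proof in this paper, because Corollary \ref{Structure} is stated in Section 3 as a review of results established in \cite{Me13}; no proof is reproduced here. Judged on its own terms, your reading of Theorem \ref{main} is the right one and most of the argument is sound: identifying the L\'evy density $\rho(t)=e^{-b_0t}\prod_{j=1}^N(1-e^{-b_jt})f(t)/t^{M+1}$, the asymptotics $\rho(t)\sim b_1\cdots b_N f(0)\,t^{N-M-1}$ at the origin, the conclusion that for $M<N$ the L\'evy measure is finite so $-\log\beta^{(f)}_{M,N}(b)$ is compound Poisson with atom $e^{-\lambda}$ at $0$, and the identification $\lambda=(\mathcal{S}_N L^{(f)}_M)(0\,|\,b)$ by pushing the finite difference operator $\mathcal{S}_N$ through the absolutely convergent Malmst\'en integral \eqref{key} and killing the Bernoulli corrections (degree $\le M\le N-1$) exactly as in Lemma \ref{MyLemma}. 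This is precisely the mechanism the paper itself uses elsewhere (e.g.\ in the proofs of Theorems \ref{barnesbetascaling} and \ref{NewShinProof}), and it yields \eqref{Pof1} and \eqref{Pof12} correctly; the ``only if'' direction is likewise fine.

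The one genuine soft spot is the absolute-continuity half of the $M=N$ case, and you partially flagged it yourself. Your Gamma-extraction needs a global bound $\rho(t)\ge c'e^{-bt}/t$, which holds for the Barnes case \eqref{fdef} (there $t\rho(t)\sim e^{-b_0t}$ at infinity), but the corollary is stated for an arbitrary positive $f$ of Ruijsenaars class, and such $f$ may decay super-exponentially (e.g.\ $f(t)=e^{-t^2}$ is analytic, positive, and of at worst polynomial growth), in which case no Gamma subordinate can be split off globally and your argument as written does not close. The standard repair is to drop the explicit decomposition and invoke the classical criterion (Tucker; see Sato, \emph{L\'evy Processes and Infinitely Divisible Distributions}, Theorem 27.7): an infinitely divisible law whose L\'evy measure has an absolutely continuous component of infinite total mass is absolutely continuous. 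Here the L\'evy measure is absolutely continuous with density $\rho$ and, when $M=N$, has infinite mass because $\rho(t)\sim C/t$ at $0$, so the criterion applies verbatim for every admissible $f$; alternatively, restrict your splitting to the L\'evy measure on $(0,1]$ and apply the same criterion to that factor. With that substitution (or with the statement restricted to \eqref{fdef}), your proof is complete and is in the same spirit as the original argument in \cite{Me13}, which also reads everything off the L\'evy--Khinchine representation \eqref{LKH}.
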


From now on we restrict our attention to Barnes multiple gamma
functions, \emph{i.e.} $f(t)$ is as in \eqref{fdef}, and write
$\eta_{M, N}(q\,|\,a,\,b)$ to indicate dependence on $(a_1,\cdots,
a_M)$ and $(b_0,\cdots, b_N).$ Also, $\hat{c}_i \triangleq (\cdots, c_{i-1},\,c_{i+1},\cdots),$ and
$b_j+x \triangleq \cdots b_{j-1}, b_j+x, b_{j+1}, \cdots.$
Note that $\eta_{M,
N}(q\,|\,a,\,b)$ is symmetric in $(a_1,\cdots, a_M)$ and
$(b_1,\cdots, b_N).$
\begin{theorem}[Functional equation]\label{FunctEquat}
$1\leq M\leq N,$ $q\in\mathbb{C}-(-\infty, -b_0],$ $i=1\cdots M,$
\begin{equation}
\eta_{M, N}(q+a_i\,|\,a,\,b) =
\eta_{M, N}(q\,|\,a,\,b)\,\exp\bigl(-(\mathcal{S}_N
L_{M-1})(q\,|\,\hat{a}_i, b)\bigr). \label{fe1}
\end{equation}
\end{theorem}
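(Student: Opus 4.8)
The plan is to reduce the assertion to the fundamental functional equation \eqref{feq} for the Barnes multiple gamma function together with the linearity of the operator $\mathcal{S}_N$. First I would rewrite the definition \eqref{eta}, specialized to $f$ as in \eqref{fdef}, as $\log\eta_{M,N}(q\,|\,a,b) = (\mathcal{S}_N L_M)(q\,|\,b) - (\mathcal{S}_N L_M)(0\,|\,b)$ with $L_M = L_M(\cdot\,|\,a)$. Forming the ratio of $\eta_{M,N}(q+a_i\,|\,a,b)$ to $\eta_{M,N}(q\,|\,a,b)$ then cancels the constant $(\mathcal{S}_N L_M)(0\,|\,b)$ and leaves $\exp\bigl((\mathcal{S}_N L_M)(q+a_i\,|\,b) - (\mathcal{S}_N L_M)(q\,|\,b)\bigr)$, so it suffices to identify this exponent with $-(\mathcal{S}_N L_{M-1})(q\,|\,\hat a_i,b)$.

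Next I would record two elementary properties of $\mathcal{S}_N$ that are immediate from \eqref{S}: it is linear in its function argument, and it commutes with translation of the base point, i.e. $(\mathcal{S}_N h)(q+a_i\,|\,b) = \bigl(\mathcal{S}_N\, h(\cdot+a_i)\bigr)(q\,|\,b)$. Combining these, the exponent above equals $\bigl(\mathcal{S}_N\,[\,L_M(\cdot+a_i\,|\,a) - L_M(\cdot\,|\,a)\,]\bigr)(q\,|\,b)$.

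The third and final step is to take logarithms in the functional equation \eqref{feq}, which gives $L_M(w+a_i\,|\,a) - L_M(w\,|\,a) = -L_{M-1}(w\,|\,\hat a_i)$ for each $i=1\cdots M$ (with $L_0(w) = -\log w$ in the case $M=1$, since $\Gamma_0(w) = 1/w$). Substituting this identity inside $\mathcal{S}_N$ and using linearity, the exponent becomes $-(\mathcal{S}_N L_{M-1})(q\,|\,\hat a_i,b)$, and exponentiating yields \eqref{fe1}.

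The only point requiring genuine care — and hence the main, albeit modest, obstacle — is the branch-cut bookkeeping: one must check that every argument occurring in the expansion of $\mathcal{S}_N$, namely each $q + b_0 + b_{k_1} + \cdots + b_{k_p}$ and its $a_i$-translate, lies in the domain $\mathbb{C}-(-\infty,0]$ on which $L_M(\cdot\,|\,a)$ is holomorphic, so that the logarithmic form of \eqref{feq} may be applied termwise. This holds because $q\in\mathbb{C}-(-\infty,-b_0]$ and all $b_j$ and $a_i$ are positive, which also explains the domain restriction in Definition \ref{bdef} and the holomorphy assertion following \eqref{eta}. I would also remark that the hypothesis $M\leq N$ plays no role in this argument — it enters only through Theorem \ref{main} — so the functional equation in fact holds for all $1\leq M$ and $N\in\mathbb{N}$.
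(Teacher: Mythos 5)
Your argument is correct and is essentially the standard derivation: the paper itself states Theorem \ref{FunctEquat} without proof as a review of \cite{Me13}, where the result is obtained in just this way, by applying the log-form of the Barnes functional equation \eqref{feq}, namely $L_M(w+a_i\,|\,a)-L_M(w\,|\,a)=-L_{M-1}(w\,|\,\hat a_i)$ (which is exact, since \eqref{feq} is itself derived from the corresponding identity for $\zeta_M$ differentiated at $s=0$), termwise under the operator $\mathcal{S}_N$ after cancelling the normalization $(\mathcal{S}_N L_M)(0\,|\,b)$. Your domain check and the observation that $M\leq N$ is not needed for this identity are both accurate.
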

\begin{corollary}[Barnes beta algebra]\label{algebra}
$1\leq M\leq N,$ $i=1\cdots M,$ $j=1\cdots N,$
\begin{align}
\beta_{M, N-1}(a,\,\hat{b}_j) &\overset{{\rm in \,law}}{=}\beta_{M,
N}(a,\,b)\,\beta_{M, N-1}(a,\,b_0+b_j,\,\hat{b}_j), \label{algebra1} \\
\beta_{M, N}(a,\,b) &\overset{{\rm in \,law}}{=}\beta_{M,
N}(a,\,b_0+a_i)\,\beta_{M-1, N}(\hat{a}_i,\,b), \label{algebra2} \\
\beta_{M, N}(a,\,b_j+a_i) &\overset{{\rm in \,law}}{=}\beta_{M,
N}(a,\,b)\,\beta_{M-1, N-1}(\hat{a}_i,\,b_0+b_j,\hat{b}_j), \label{algebra3} \\
\beta_{M, N}(a,\,b_j+a_i) &\overset{{\rm in \,law}}{=}\beta_{M,
N}(a,\,b_0+a_i)\,\beta_{M-1, N-1}(\hat{a}_i,\,\hat{b}_j). \label{algebra4}
\end{align}
\end{corollary}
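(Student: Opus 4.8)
The plan is to verify each of the four ``in law'' equalities at the level of Mellin transforms and then appeal to uniqueness. By Theorem~\ref{main} the function $\eta_{M,N}(q\,|\,a,b)$ is the Mellin transform of $\beta_{M,N}(a,b)$, a law on $(0,1]$ determined by its moments; since the Mellin transform of a product of independent variables is the product of the Mellin transforms, each statement in Corollary~\ref{algebra} is equivalent to the corresponding multiplicative identity among the $\eta$'s, and, after taking logarithms and using Definition~\ref{bdef} (which gives $\log\eta_{M,N}(q\,|\,a,b)=(\mathcal S_N L_M)(q\,|\,b)-(\mathcal S_N L_M)(0\,|\,b)$), to an additive identity for $q\mapsto(\mathcal S_N L_M)(q\,|\,b)$; the subtraction of the value at $q=0$ then takes care of itself, since every manipulation below is carried out at a general $q$.

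I would first isolate two elementary facts. \textbf{(i)} The peeling identity for $\mathcal S_N$: splitting the alternating sum in \eqref{S} according to whether a fixed index $j\in\{1,\dots,N\}$ lies in the chosen subset gives
\[
(\mathcal S_N h)(q\,|\,b)=(\mathcal S_{N-1}h)(q\,|\,\hat b_j)-(\mathcal S_{N-1}h)(q\,|\,b_0+b_j,\hat b_j).
\]
\textbf{(ii)} The logarithmic form of the functional equation \eqref{feq}, $L_M(w+a_i\,|\,a)=L_M(w\,|\,a)-L_{M-1}(w\,|\,\hat a_i)$, combined with the observation that inside $\mathcal S_N$ a shift of the argument by $a_i$ is the same as the replacement $b_0\mapsto b_0+a_i$ (and, after peeling $b_j$, the same as $b_j\mapsto b_j+a_i$). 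Granting (i), identity \eqref{algebra1} is immediate: apply (i) with $h=L_M(\cdot\,|\,a)$ to $(\mathcal S_N L_M)(\cdot\,|\,b)$, exponentiate, and rearrange. Granting (ii), identity \eqref{algebra2} follows by applying $\mathcal S_N$ to $L_M(w\,|\,a)=L_{M-1}(w\,|\,\hat a_i)+L_M(w+a_i\,|\,a)$ and reading off the right-hand side as $\log\eta_{M-1,N}(q\,|\,\hat a_i,b)+\log\eta_{M,N}(q\,|\,a,b_0+a_i)$.

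For \eqref{algebra3} I would use both facts in turn. Peeling the index $j$ via (i) in both $(\mathcal S_N L_M)(q\,|\,a,b_j+a_i)$ and $(\mathcal S_N L_M)(q\,|\,a,b)$, the common term $(\mathcal S_{N-1}L_M)(q\,|\,a,\hat b_j)$ cancels in the difference, leaving $(\mathcal S_{N-1}L_M)(q\,|\,a,b_0+b_j,\hat b_j)-(\mathcal S_{N-1}L_M)(q+a_i\,|\,a,b_0+b_j,\hat b_j)$; applying (ii) inside $\mathcal S_{N-1}$ rewrites this as $(\mathcal S_{N-1}L_{M-1})(q\,|\,\hat a_i,b_0+b_j,\hat b_j)$, i.e.\ $\log\eta_{M-1,N-1}(q\,|\,\hat a_i,b_0+b_j,\hat b_j)$, which is the claim. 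Finally, \eqref{algebra4} I would deduce formally from the first three: substitute \eqref{algebra2} into the right-hand side of \eqref{algebra3} and contract $\beta_{M-1,N}(\hat a_i,b)\,\beta_{M-1,N-1}(\hat a_i,b_0+b_j,\hat b_j)$ into $\beta_{M-1,N-1}(\hat a_i,\hat b_j)$ using \eqref{algebra1} with $(M,a)$ replaced by $(M-1,\hat a_i)$; the required index ranges are inherited from $M\le N$, the boundary case $M=1$ being covered by the convention $\Gamma_0(w)=1/w$. I do not expect a real obstacle here: everything is linear in the log-gamma functions, and the whole content is the combinatorial bookkeeping of (i) together with the one-step recursion (ii) — the only point requiring care is keeping track of which argument slot of $\mathcal S_N$ a shift lands in, and of the $b_0$-entry when an index is peeled off.
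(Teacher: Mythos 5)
Your proposal is correct: verifying each identity at the level of the Mellin transforms $\eta$ via the peeling identity for $\mathcal{S}_N$ (splitting the alternating sum on whether $j$ lies in the chosen subset), the logarithmic form of the functional equation \eqref{feq}, and the observation that a shift $q\mapsto q+a_i$ is a shift $b_0\mapsto b_0+a_i$, is exactly the intended derivation, and your reductions (including obtaining \eqref{algebra4} by combining \eqref{algebra1}--\eqref{algebra3}) all check out. The paper itself states Corollary \ref{algebra} without proof, as a review of \cite{Me13} flagged as a consequence of Theorem \ref{FunctEquat}, so your argument is essentially the same route made explicit rather than a genuinely different one.
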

\begin{theorem}[Shintani factorization]\label{Factorization}
Given $1\leq M\leq N$ and $q\in\mathbb{C}-(-\infty, -b_0],$
\begin{equation}\label{infinprod2}
\eta_{M,N}(q\,|\,a, b) = \prod\limits_{k=0}^\infty \frac{\eta_{M-1,N}(q+k
a_i\,|\,\hat{a}_i, b)}{\eta_{M-1,N}(k
a_i\,|\,\hat{a}_i, b)}.
\end{equation}
\end{theorem}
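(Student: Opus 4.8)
The plan is to reduce \eqref{infinprod2} to a manipulation of the L\'evy--Khinchine integral representation of $\log\eta_{M,N}$. Replacing $q$ by $-q$ in \eqref{LKH} and recalling ${\bf E}[\beta_{M,N}(b)^q]=\eta_{M,N}(q\,|\,b)$, together with $f(t)/t^{M+1}=\bigl(t\prod_{j=1}^M(1-e^{-a_jt})\bigr)^{-1}$ for $f$ as in \eqref{fdef}, one obtains, for $\Re(q)>-b_0$,
\[
\log\eta_{M,N}(q\,|\,a,b)=\int_0^\infty(e^{-qt}-1)\,\frac{e^{-b_0t}\prod_{j=1}^N(1-e^{-b_jt})}{t\,\prod_{j=1}^M(1-e^{-a_jt})}\,dt
\]
(equivalently, apply the operator $h\mapsto(\mathcal{S}_Nh)(q\,|\,b)-(\mathcal{S}_Nh)(0\,|\,b)$ to \eqref{key}, noting that it annihilates the Bernoulli-polynomial corrections there, which are polynomials in $w$ of degree $\le M\le N$). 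Now fix $i$, single out the $i$-th factor of the denominator, and expand $(1-e^{-a_it})^{-1}=\sum_{k\ge0}e^{-ka_it}$. Since $e^{-ka_it}(e^{-qt}-1)=(e^{-(q+ka_i)t}-1)-(e^{-ka_it}-1)$ and the kernel $e^{-b_0t}\prod_{j=1}^N(1-e^{-b_jt})\big/\bigl(t\prod_{j\ne i}(1-e^{-a_jt})\bigr)$ is precisely the kernel in the above representation of $\log\eta_{M-1,N}(\,\cdot\,|\,\hat a_i,b)$ --- deleting $a_i$ removes exactly one denominator factor --- each term of the resulting series equals $\log\eta_{M-1,N}(q+ka_i\,|\,\hat a_i,b)-\log\eta_{M-1,N}(ka_i\,|\,\hat a_i,b)$. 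Exponentiating yields \eqref{infinprod2}.

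There are three routine points to settle. First, the interchange of $\sum_k$ and $\int_0^\infty$: for real $q$ the factor $e^{-qt}-1$ keeps a fixed sign while the kernel is non-negative, so each term $e^{-ka_it}(e^{-qt}-1)\times(\text{kernel})$ has a fixed sign and the monotone convergence theorem applies to the partial sums $\sum_{k<n}e^{-ka_it}\uparrow(1-e^{-a_it})^{-1}$; the identity then extends to $q\in\mathbb{C}-(-\infty,-b_0]$ by analytic continuation. Second, integrability: near $t=0$ the product $\prod_{j=1}^N(1-e^{-b_jt})$ vanishes to order $N$, which, because $M\le N$, controls the order-$(M+1)$ singularity of $\bigl(t\prod_{j=1}^M(1-e^{-a_jt})\bigr)^{-1}$, while near $t=\infty$ the factor $e^{-b_0t}$ and $\Re(q)>-b_0$ give exponential decay; with $e^{-ka_it}\le e^{-a_it}$ for $k\ge1$ the same bounds show that $\sum_k\bigl|\log\eta_{M-1,N}(q+ka_i\,|\,\hat a_i,b)-\log\eta_{M-1,N}(ka_i\,|\,\hat a_i,b)\bigr|$ converges locally uniformly on $\mathbb{C}-(-\infty,-b_0]$, so the product in \eqref{infinprod2} is holomorphic there and the identity theorem legitimizes the continuation. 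Third, when $M=1$ the factor $\eta_{0,N}$ is to be read through the convention $\Gamma_0(w)=1/w$ of Section 2; the integral representation of $\log\eta_{0,N}$ persists verbatim and the argument is unchanged.

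An alternative, more structural route applies the operator $h\mapsto(\mathcal{S}_Nh)(q\,|\,b)-(\mathcal{S}_Nh)(0\,|\,b)$ directly to the logarithm of the Shintani factorization of $\Gamma_M$ itself --- Theorem~\ref{ShinFactor} with $g\equiv1$, the index there called $M$ taken to be $M-1$ and $a_{M+1}$ taken to be $a_i$. Writing $w^m=(-1)^m\partial_t^m|_{t=0}e^{-wt}$ one checks that this operator kills every polynomial in $w$ of degree $\le N$ and every constant; since $M\le N$ this removes the corrections $\Psi^{(f)}_M$ and $\phi^{(f)}_M$ and all the $x$-dependent ($w$-independent) factors, and what remains is exactly $\sum_{k\ge0}\bigl[\log\eta_{M-1,N}(q+ka_i\,|\,\hat a_i,b)-\log\eta_{M-1,N}(ka_i\,|\,\hat a_i,b)\bigr]$ on the right against $\log\eta_{M,N}(q\,|\,a,b)$ on the left. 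In either route I expect the only real work to be analytic rather than algebraic: controlling the convergence of the infinite product and justifying the term-by-term operations (the sum--integral interchange in the first route, or the term-by-term action of $\mathcal{S}_N$ on the conditionally convergent Shintani product in the second), together with confirming that no leftover boundary term survives once the polynomial corrections are stripped --- equivalently that $\eta_{M,N}(na_i\,|\,a,b)/\eta_{M,N}(q+na_i\,|\,a,b)\to1$ as $n\to\infty$, which is again immediate from the integrability estimates above. As a sanity check, Theorem~\ref{FunctEquat} and $\eta_{M-1,N}(0\,|\,\hat a_i,b)=1$ show that the right-hand side of \eqref{infinprod2} obeys the same functional equation in $q$ as $\eta_{M,N}$ does.
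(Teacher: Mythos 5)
Your proposal is correct, but your primary route differs from the proof this paper gives. The paper's proof (Theorem~\ref{NewShinProof}) is precisely your ``alternative, more structural route'': apply the operator $h\mapsto(\mathcal{S}_Nh)(q\,|\,b)-(\mathcal{S}_Nh)(0\,|\,b)$ to the logarithm of the Shintani factorization of the multiple gamma function (Theorem~\ref{ShinFactor} with $g\equiv 1$, its $M$ set to $M-1$, and its $a_{M+1}$ set to $a_i$), and invoke Lemma~\ref{MyLemma} together with the fact that $\Psi$ and $\phi$ are polynomials in $w$ of degree $M\leq N$ to annihilate every correction term, leaving $(\mathcal{S}_N L_M)(q\,|\,a,b)-(\mathcal{S}_N L_M)(0\,|\,a,b)=\sum_{k\geq 0}\bigl[(\mathcal{S}_N L_{M-1})(q+ka_i\,|\,\hat a_i,b)-(\mathcal{S}_N L_{M-1})(ka_i\,|\,\hat a_i,b)\bigr]$, which is \eqref{infinprod2}. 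Your main argument --- expanding $(1-e^{-a_it})^{-1}$ as a geometric series inside the L\'evy--Khinchine representation \eqref{LKH} and recognizing each term as $\log\eta_{M-1,N}(q+ka_i\,|\,\hat a_i,b)-\log\eta_{M-1,N}(ka_i\,|\,\hat a_i,b)$ --- is essentially the original proof of \cite{Me13}, which this paper only recalls in the remark following Theorem~\ref{NewShinProof} (there phrased through \eqref{fe2} and the probabilistic form \eqref{funcequatsolut}). What each buys: your L\'evy--Khinchine route is self-contained and makes the analysis transparent (fixed signs and monotone convergence for real $q$, then analytic continuation via locally uniform convergence), while the paper's route is algebraically immediate and delivers the structural message the paper cares about, namely that the factorization of the Mellin transform is the image under $\mathcal{S}_N$ of the Shintani factorization of $\Gamma_M$. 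One minor point of polish: the bound $e^{-ka_it}\leq e^{-a_it}$ by itself only controls individual terms uniformly; the summability in $k$ needed for locally uniform convergence comes from integrating $e^{-ka_it}$ against the kernel, which is $O(t^{N-M})$ as $t\to 0$ and $O(e^{-b_0t}/t)$ as $t\to\infty$, so the $k$-th term is $O\bigl(k^{-(N-M+2)}\bigr)$ plus an exponentially small contribution --- routine, but it should be said explicitly.
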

\begin{corollary}[Solution to the functional equation]\label{solution}
The infinite product representation in Theorem \ref{Factorization}
is the solution to the functional equation in Theorem
\ref{FunctEquat}. Probabilistically,
\begin{equation}\label{funcequatsolut}
\beta_{M, N}(a,\,b) \overset{{\rm in \,law}}{=}
\prod\limits_{k=0}^\infty \beta_{M-1, N}(\hat{a}_i,\,b_0+ka_i).
\end{equation}
\end{corollary}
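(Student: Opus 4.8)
The plan is to deduce both claims from the Shintani factorization (Theorem~\ref{Factorization}) and the functional equation (Theorem~\ref{FunctEquat}), after first rewriting the Shintani product so that each factor is visibly a Mellin transform of a lower Barnes beta distribution. By Definition~\ref{Soperator}, $\mathcal{S}_N$ is the composition $T_{b_0}\prod_{j=1}^{N}(I-T_{b_j})$ of translation operators $T_c h(\cdot)=h(\cdot+c)$, so replacing $b_0$ by $b_0+ka_i$ is the same as translating the argument by $ka_i$; together with Definition~\ref{bdef} this yields the elementary identity
\[
\frac{\eta_{M-1,N}(q+ka_i\,|\,\hat a_i,b)}{\eta_{M-1,N}(ka_i\,|\,\hat a_i,b)}=\eta_{M-1,N}(q\,|\,\hat a_i,b_0+ka_i).
\]
Since $M-1\le N$ and $b_0+ka_i>0$, Theorem~\ref{main} identifies the right side as the Mellin transform of $\beta_{M-1,N}(\hat a_i,b_0+ka_i)$, and so Theorem~\ref{Factorization} reads $\eta_{M,N}(q\,|\,a,b)=\prod_{k=0}^{\infty}\eta_{M-1,N}(q\,|\,\hat a_i,b_0+ka_i)$.

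That this product solves the functional equation is then immediate: it equals $\eta_{M,N}(q\,|\,a,b)$ by Theorem~\ref{Factorization}, which satisfies~\eqref{fe1} by Theorem~\ref{FunctEquat}. One can also see it directly: with $\ell(x):=(\mathcal{S}_N L_{M-1})(x\,|\,\hat a_i,b)$ one has $\eta_{M-1,N}(q\,|\,\hat a_i,b_0+ka_i)=\exp(\ell(q+ka_i)-\ell(ka_i))$, the product of the first $K+1$ factors at $q+a_i$ divided by that at $q$ telescopes to $\exp(\ell(q+(K+1)a_i)-\ell(q))$, and since $N\ge M$ the operator $\mathcal{S}_N$ contains $N$ first-difference factors, which annihilate the leading polynomial-times-logarithm growth of $L_{M-1}$ and force $\ell(x)\to0$ as $x\to\infty$; letting $K\to\infty$ recovers the factor $\exp(-(\mathcal{S}_N L_{M-1})(q\,|\,\hat a_i,b))$ of~\eqref{fe1}, and the same decay makes the infinite product converge.

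For the probabilistic statement, let $X_0,X_1,\dots$ be independent with $X_k$ distributed as $\beta_{M-1,N}(\hat a_i,b_0+ka_i)$, all valued in $(0,1]$. The partial products $P_K:=\prod_{k=0}^{K}X_k$ decrease and stay in $(0,1]$, so converge a.s.\ to some $P_\infty\in[0,1]$. For real $q>0$, dominated convergence (all $P_K^q\le1$) and independence give ${\bf E}[P_\infty^q]=\lim_K\prod_{k=0}^{K}\eta_{M-1,N}(q\,|\,\hat a_i,b_0+ka_i)=\eta_{M,N}(q\,|\,a,b)$ by the rewriting above; letting $q\downarrow0$ and using dominated convergence once more, ${\bf P}[P_\infty>0]=\lim_{q\downarrow0}{\bf E}[P_\infty^q]=\eta_{M,N}(0\,|\,a,b)=1$, so $P_\infty\in(0,1]$ a.s. Its Mellin transform then equals that of $\beta_{M,N}(a,b)$ for all $q>0$, which determines a law on $(0,1]$, so $P_\infty$ has the law of $\beta_{M,N}(a,b)$ by Theorem~\ref{main}; this is~\eqref{funcequatsolut}.

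The one delicate point is the a.s.\ convergence of the product to a \emph{strictly positive} limit, i.e.\ $\sum_k(-\log X_k)<\infty$ a.s. The $q\downarrow0$ argument handles it, but it also follows from $\sum_k{\bf E}[-\log X_k]<\infty$: differentiating the L\'evy--Khinchine formula~\eqref{LKH} at $q=0$ and summing the geometric series $\sum_k e^{-(b_0+ka_i)t}=e^{-b_0t}(1-e^{-a_it})^{-1}$ collapses the series of integrals into $\int_0^{\infty}t\,e^{-b_0t}\prod_{j=1}^{N}(1-e^{-b_jt})\,f(t)\,t^{-M-1}\,dt$ with $f$ as in~\eqref{fdef}, which equals ${\bf E}[-\log\beta_{M,N}(a,b)]$ and is finite because the integrand is $O(t^{N-M})$ near $0$ (as $N\ge M$) and decays exponentially at infinity. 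Everything else is bookkeeping with Definitions~\ref{Soperator}--\ref{bdef} and the two cited theorems.
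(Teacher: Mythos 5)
Your proof is correct and follows essentially the same route as the paper: the shift identity you derive from Definitions \ref{Soperator}--\ref{bdef} is precisely the paper's identity \eqref{fe2}, through which the paper regards the Shintani product \eqref{infinprod2} and the probabilistic statement \eqref{funcequatsolut} as equivalent. The convergence and positivity bookkeeping you add (in particular the geometric-series collapse of the L\'evy--Khinchine integrals, which is exactly the ``special property'' of Theorem \ref{main} that the paper attributes to the original argument in \cite{Me13}) simply fills in details the paper leaves implicit.
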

\begin{theorem}[Moments]\label{IntegralMoments} Let $k\in\mathbb{N}.$
\begin{align}
{\bf E}\bigl[\beta_{M, N}(a, b)^{k a_i}\bigr] &  =
\exp\Bigl(-\sum\limits_{l=0}^{k-1} \bigl(\mathcal{S}_N
L_{M-1}\bigr)(l a_i\,|\,\hat{a}_i, b)\Bigr), \label{posmom} \\
{\bf E}\bigl[\beta_{M, N}(a, b)^{-k a_i}\bigr] & =
\exp\Bigl(\sum\limits_{l=0}^{k-1} \bigl(\mathcal{S}_N
L_{M-1}\bigr)(-(l+1) a_i\,|\,\hat{a}_i, b)\Bigr), \; k a_i<b_0. \label{negmom}
\end{align}
\end{theorem}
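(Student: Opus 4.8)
The plan is to derive both identities by iterating the functional equation of Theorem~\ref{FunctEquat}, starting from the normalization $\eta_{M,N}(0\,|\,a,b)=1$. This normalization is immediate from \eqref{eta} (the two $\mathcal{S}_N L_M$ terms cancel at $q=0$), or equivalently from the fact that $\beta_{M,N}(a,b)$ is a probability law. Throughout I use ${\bf E}\bigl[\beta_{M,N}(a,b)^q\bigr]=\eta_{M,N}(q\,|\,a,b)$ for $\Re(q)>-b_0$ to pass between moments of $\beta_{M,N}$ and values of $\eta_{M,N}$; I also recall that, by Theorem~\ref{FunctEquat}, $\eta_{M,N}$ is symmetric in $a_1,\cdots,a_M$, so the index $i$ may be fixed arbitrarily.

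For the positive moments, fix $i$ and substitute $q=l\,a_i$ into \eqref{fe1} for $l=0,1,2,\cdots$; since $b_0>0$ we have $l\,a_i\ge 0>-b_0$, so the functional equation applies and gives
\[
\eta_{M,N}\bigl((l+1)a_i\,|\,a,b\bigr)=\eta_{M,N}\bigl(l\,a_i\,|\,a,b\bigr)\,\exp\bigl(-(\mathcal{S}_N L_{M-1})(l\,a_i\,|\,\hat{a}_i,b)\bigr).
\]
An induction on $k$ with base case $\eta_{M,N}(0\,|\,a,b)=1$ telescopes this into $\eta_{M,N}(k\,a_i\,|\,a,b)=\exp\bigl(-\sum_{l=0}^{k-1}(\mathcal{S}_N L_{M-1})(l\,a_i\,|\,\hat{a}_i,b)\bigr)$, which is \eqref{posmom} after writing the left-hand side as ${\bf E}\bigl[\beta_{M,N}(a,b)^{k a_i}\bigr]$.

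For the negative moments the same recursion is run downward, and the only genuine point to watch is the domain of validity of \eqref{fe1}. Replacing $q$ by $-(l+1)a_i$ in \eqref{fe1} and solving for the value at the smaller argument gives, whenever $-(l+1)a_i\notin(-\infty,-b_0]$, i.e. whenever $(l+1)a_i<b_0$,
\[
\eta_{M,N}\bigl(-(l+1)a_i\,|\,a,b\bigr)=\eta_{M,N}\bigl(-l\,a_i\,|\,a,b\bigr)\,\exp\bigl((\mathcal{S}_N L_{M-1})(-(l+1)a_i\,|\,\hat{a}_i,b)\bigr).
\]
Under the hypothesis $k\,a_i<b_0$ the condition $(l+1)a_i<b_0$ holds for every $l=0,\cdots,k-1$, so this recursion is legitimate at each step; iterating it from $l=0$ up to $l=k-1$ and again using $\eta_{M,N}(0\,|\,a,b)=1$ yields $\eta_{M,N}(-k\,a_i\,|\,a,b)=\exp\bigl(\sum_{l=0}^{k-1}(\mathcal{S}_N L_{M-1})(-(l+1)a_i\,|\,\hat{a}_i,b)\bigr)$. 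Since $k\,a_i<b_0$ gives $-k\,a_i>-b_0$, the left side equals ${\bf E}\bigl[\beta_{M,N}(a,b)^{-k a_i}\bigr]$, which is \eqref{negmom}.

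I do not expect a real obstacle: the argument is bookkeeping built on Theorem~\ref{FunctEquat}. The one place needing care is exactly the domain tracking in the negative-moment case — one must check that \eqref{fe1} is applicable at each intermediate argument $-a_i,-2a_i,\cdots,-(k-1)a_i$, which is why the correct hypothesis is $k\,a_i<b_0$ rather than merely $a_i<b_0$.
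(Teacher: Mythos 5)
Your proposal is correct, and it follows the intended argument: the paper states this theorem as a review of \cite{Me13}, where the moment formulas are obtained exactly as you do, by iterating the functional equation \eqref{fe1} from the normalization $\eta_{M,N}(0\,|\,a,b)=1$ and tracking that all intermediate arguments stay in $\mathbb{C}-(-\infty,-b_0]$ (which is what the hypothesis $k\,a_i<b_0$ guarantees for the negative moments). No gaps.
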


We have two results that hold for special values of $a$ and $b.$
\begin{theorem}[Reduction to independent factors]\label{Reduction}
Given $i, j,$ if $b_j=n\,a_i$ for $n\in\mathbb{N},$
\begin{equation}\label{reductequat}
\beta_{M, N}(a, b) \overset{{\rm in \,law}}{=} \prod_{k=0}^{n-1}
\beta_{M-1, N-1}\bigl(\hat{a}_i, b_0+ka_i,\, \hat{b}_j\bigr).
\end{equation}
\end{theorem}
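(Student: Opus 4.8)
The plan is to prove \eqref{reductequat} by matching L\'evy--Khinchine data: the goal is to show that the L\'evy measure of $-\log\beta_{M,N}(a,b)$ splits as the sum of the L\'evy measures of the independent laws $-\log\beta_{M-1,N-1}(\hat a_i,\,b_0+ka_i,\,\hat b_j)$, $k=0,\dots,n-1$. By Theorem \ref{main} a Barnes beta law is determined by its Mellin transform, and a finite product of independent $(0,1]$-valued random variables has Mellin transform equal to the product of the individual Mellin transforms, so it suffices to work with the representation \eqref{LKH} of the (infinitely divisible) logarithms. Since $M\le N$ gives $M-1\le N-1$, each $\beta_{M-1,N-1}(\hat a_i,\,b_0+ka_i,\,\hat b_j)$ exists by Theorem \ref{main}, so the right-hand side of \eqref{reductequat} is well defined.

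First I would read off from \eqref{LKH} with $f$ as in \eqref{fdef} that the L\'evy density of $-\log\beta_{M,N}(a,b)$ on $(0,\infty)$ is
\[ e^{-b_0 t}\prod_{l=1}^N(1-e^{-b_l t})\,\frac{f(t)}{t^{M+1}} \;=\; \frac{e^{-b_0 t}}{t}\;\frac{\prod_{l=1}^N(1-e^{-b_l t})}{\prod_{l=1}^M(1-e^{-a_l t})}. \]
Next, under the hypothesis $b_j=na_i$ I would use the finite geometric identity $(1-e^{-na_i t})/(1-e^{-a_i t})=\sum_{k=0}^{n-1}e^{-ka_i t}$ to cancel the factor $1-e^{-b_j t}$ against $1-e^{-a_i t}$, turning the above into
\[ \sum_{k=0}^{n-1}\frac{e^{-(b_0+ka_i)t}}{t}\;\frac{\prod_{l\neq j}(1-e^{-b_l t})}{\prod_{l\neq i}(1-e^{-a_l t})}. \]
Finally I would recognize the $k$-th summand as exactly the L\'evy density in \eqref{LKH} of $-\log\beta_{M-1,N-1}(\hat a_i,\,b_0+ka_i,\,\hat b_j)$: here the function from \eqref{fdef} is $\tilde f(t)=t^{M-1}\prod_{l\neq i}(1-e^{-a_l t})^{-1}$, the parameter $b_0$ is replaced by $b_0+ka_i$, the $b$-vector is $\hat b_j$, and the exponent in the denominator drops from $t^{M+1}$ to $t^{M}$. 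Hence the L\'evy measures add, and since by \eqref{LKH} the law of each logarithm is determined by its L\'evy measure alone, $-\log\beta_{M,N}(a,b)\overset{{\rm in \,law}}{=}\sum_{k=0}^{n-1}\bigl(-\log\beta_{M-1,N-1}(\hat a_i,\,b_0+ka_i,\,\hat b_j)\bigr)$ with independent summands; exponentiating gives \eqref{reductequat}.

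There is no deep difficulty here; the points that need care are bookkeeping ones. One must check that the term-by-term splitting of the integral in \eqref{LKH} is legitimate, i.e.\ that each summand is separately integrable near $t=0$ and $t=\infty$ with the correct order of singularity at $0$ --- this is automatic because each summand is the L\'evy density of an existing Barnes beta law, which is exactly where the hypothesis $M\le N$ is used --- and one must read the degenerate cases $M=1$ (so $\hat a_i$ is empty and the factors on the right are of type $\beta_{0,N-1}$) and $N=1$ (so $\hat b_j$ is empty) with the displayed products interpreted as empty. An alternative, essentially equivalent, proof runs by induction on $n$ from the algebra relation \eqref{algebra3}: applying $\beta_{M,N}(a,\,b_j+a_i)\overset{{\rm in \,law}}{=}\beta_{M,N}(a,b)\,\beta_{M-1,N-1}(\hat a_i,\,b_0+b_j,\,\hat b_j)$ successively with $b_j=0,a_i,\dots,(n-1)a_i$ and telescoping reduces the claim to the base fact that $\beta_{M,N}(a,b)$ equals $1$ a.s.\ when $b_j=0$, which is immediate from \eqref{LKH} (or from \eqref{eta}) since the L\'evy density vanishes identically for $b_j=0$. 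I would present the first proof in the text and record the second as a remark.
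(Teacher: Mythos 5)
Your proof is correct: with $f$ as in \eqref{fdef} the L\'evy density in \eqref{LKH} is $\frac{e^{-b_0t}}{t}\prod_{l=1}^N(1-e^{-b_lt})/\prod_{l=1}^M(1-e^{-a_lt})$, and the geometric identity $(1-e^{-na_it})/(1-e^{-a_it})=\sum_{k=0}^{n-1}e^{-ka_it}$ splits it exactly into the L\'evy densities of the factors $-\log\beta_{M-1,N-1}(\hat a_i,\,b_0+ka_i,\,\hat b_j)$, which determines the law. The paper itself states Theorem \ref{Reduction} without proof as part of the review of \cite{Me13}, and your L\'evy--Khinchine argument is essentially the same route used there and elsewhere in the paper for such identities (compare the remark after Theorem \ref{NewShinProof} on the original LKH-based proof of Theorem \ref{Factorization}), so nothing further is needed beyond the bookkeeping caveats you already note.
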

\begin{theorem}[Moments]\label{Momentsaspecial} Let $a_i=1$ for all $i=1\cdots M.$ Then, for any
$n\in\mathbb{N},$
\begin{align}
{\bf E}\bigl[\beta_{M, N}(a, b)^n\bigr]  & =
\prod\limits_{i=1}^{M-1}e^{(-1)^i \binom{n}{i} (\mathcal{S}_N
L_{M-i})(0\,|\,b)} \prod\limits_{i_1=0}^{n-1}
\prod\limits_{i_2=0}^{i_1-1} \cdots \prod\limits_{i_M=0}^{i_{M-1}-1}
e^{(-1)^M (\mathcal{S}_N L_{0})(i_M\,|\,b)}, \nonumber \\
& = \prod\limits_{i=1}^{M-1}e^{(-1)^i \binom{n}{i} (\mathcal{S}_N
L_{M-i})(0\,|\,b)} \prod\limits_{i_1=0}^{n-1}
\prod\limits_{i_2=0}^{i_1-1} \cdots \prod\limits_{i_M=0}^{i_{M-1}-1}
\Bigl[\frac{\prod\limits_{j_1=1}^N (i_M+b_0+b_{j_1})}{(i_M+b_0)} \star \nonumber \\
& \star \frac{\prod\limits_{j_1<j_2<j_3}^N
(i_M+b_0+b_{j_1}+b_{j_2}+b_{j_3})}{\prod\limits_{j_1<j_2}^N
(i_M+b_0+b_{j_1}+b_{j_2})}\cdots\Bigr]^{(-1)^M}.
\end{align}
\end{theorem}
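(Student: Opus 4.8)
The plan is to iterate the functional equation for the Mellin transform from Theorem~\ref{FunctEquat} in the case $a_1=\cdots=a_M=1$, unwinding it one variable at a time until all dependence on the ``$a$'' parameters has been stripped away and replaced by an explicit multiple sum over the $M$ integer indices $i_1,\dots,i_M$. Concretely, since $a_i=1$, equation~\eqref{fe1} reads $\eta_{M,N}(q+1\,|\,a,b)=\eta_{M,N}(q\,|\,a,b)\exp(-(\mathcal{S}_N L_{M-1})(q\,|\,\hat a_i,b))$, and by telescoping from $q=0$ to $q=n$ we get ${\bf E}[\beta_{M,N}(a,b)^n]=\eta_{M,N}(n\,|\,a,b)=\exp(-\sum_{i_1=0}^{n-1}(\mathcal{S}_N L_{M-1})(i_1\,|\,\hat a_i,b))$, using $\eta_{M,N}(0\,|\,a,b)=1$. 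This is exactly the $M=1$ content of the first line, and it already appears as \eqref{posmom} of Theorem~\ref{IntegralMoments} with $k=n$. The work is then to iterate: $\mathcal{S}_N L_{M-1}$ still depends on the remaining $M-1$ copies of the parameter $1$, so I apply the same telescoping to $L_{M-1}$ viewed through its own functional equation~\eqref{feq}, $L_{M-1}(w\,|\,a)=L_{M-2}(w\,|\,\hat a_j)+L_{M-1}(w+1\,|\,a)$ rearranged as $L_{M-1}(w+1\,|\,a)-L_{M-1}(w\,|\,a)=-L_{M-2}(w\,|\,\hat a_j)$, and telescope the inner sum from $0$ to $i_1-1$. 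Repeating this $M$ times produces the nested product $\prod_{i_1=0}^{n-1}\prod_{i_2=0}^{i_1-1}\cdots\prod_{i_M=0}^{i_{M-1}-1}$ with innermost term $\exp((-1)^M(\mathcal{S}_N L_0)(i_M\,|\,b))$, which is the first displayed line.

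For the second equality I would simply substitute the closed form of $L_0$. From the review, $\Gamma_0(w)=1/w$ and $\Gamma_M=\exp(L_M)$, so $L_0(w)=-\log w$; hence $(\mathcal{S}_N L_0)(i_M\,|\,b)$ is, by Definition~\ref{Soperator}, the alternating sum $\sum_{p=0}^N(-1)^p\sum_{k_1<\cdots<k_p}(-\log(i_M+b_0+b_{k_1}+\cdots+b_{k_p}))$, which exponentiates to the alternating product $\prod_{j_1}(i_M+b_0+b_{j_1})/(i_M+b_0)\cdot\prod_{j_1<j_2<j_3}(\cdots)/\prod_{j_1<j_2}(\cdots)\cdots$ raised to the power $(-1)^M$ after accounting for the overall sign. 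The bookkeeping of signs --- the $(-1)^p$ from $\mathcal{S}_N$, the extra minus from $L_0(w)=-\log w$, and the $(-1)^M$ accumulated through the $M$ telescopings --- is the one genuinely fiddly point, but it is purely a sign-tracking exercise.

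There is, however, one subtlety that I expect to be the real obstacle, namely the boundary terms in the telescoping. Telescoping $L_m(w+1\,|\,a)-L_m(w\,|\,a)=-L_{m-1}(w\,|\,\hat a_j)$ from $w=0$ to $w=i-1$ gives $L_m(i\,|\,a)-L_m(0\,|\,a)=-\sum_{w=0}^{i-1}L_{m-1}(w\,|\,\hat a_j)$, so each stage leaves behind a residual constant $L_m(0\,|\,a)$ (more precisely $(\mathcal{S}_N L_m)(0\,|\,b)$ after the $\mathcal{S}_N$ has been carried along). These residual terms are precisely the source of the prefactor $\prod_{i=1}^{M-1}e^{(-1)^i\binom{n}{i}(\mathcal{S}_N L_{M-i})(0\,|\,b)}$: the binomial coefficient $\binom{n}{i}$ arises because, when telescoping at level $i$ is done inside a sum that has already been telescoped $i-1$ times, the constant term $(\mathcal{S}_N L_{M-i})(0\,|\,b)$ gets counted $\sum_{i_1=0}^{n-1}\sum_{i_2=0}^{i_1-1}\cdots\sum_{i_i=0}^{i_{i-1}-1}1=\binom{n}{i}$ times. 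Verifying that this combinatorial identity (the iterated partial sum of $1$ equals the binomial coefficient) holds and that the signs line up as $(-1)^i$ is where I would concentrate the care; once that is pinned down, collecting all residual constants into the stated prefactor and the surviving innermost terms into the nested product completes the proof.
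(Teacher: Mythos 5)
Your proposal is correct. Note that the paper itself states Theorem~\ref{Momentsaspecial} without proof---Section~3 is a review of results established in \cite{Me13}---so there is no in-paper argument to compare against; your derivation is precisely the one the stated machinery suggests, and it checks out. Telescoping \eqref{fe1} with $a_i=1$ gives $\eta_{M,N}(n\,|\,a,b)=\exp\bigl(-\sum_{i_1=0}^{n-1}(\mathcal{S}_N L_{M-1})(i_1\,|\,b)\bigr)$, i.e.\ \eqref{posmom} with $k=n$; then the functional equation \eqref{feq} in the form $L_m(w+1\,|\,a)-L_m(w\,|\,a)=-L_{m-1}(w\,|\,\hat{a}_j)$ passes through the linear operator $\mathcal{S}_N$, and each of the $M-1$ telescopings leaves the boundary constant $(\mathcal{S}_N L_{M-i})(0\,|\,b)$ counted $\sum_{i_1=0}^{n-1}\cdots\sum_{i_i=0}^{i_{i-1}-1}1=\binom{n}{i}$ times (the number of strictly decreasing $i$-tuples in $\{0,\dots,n-1\}$) with sign $(-1)^i$, while the surviving nested sum at level $0$ carries sign $(-1)^M$; this is exactly the first displayed line. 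Finally $L_0(w)=-\log w$ makes $(\mathcal{S}_N L_0)(i_M\,|\,b)$ the alternating sum of $-\log(i_M+b_0+b_{k_1}+\cdots+b_{k_p})$, and exponentiating with the factor $(-1)^M$ yields the stated bracket raised to $(-1)^M$, so the sign bookkeeping you flagged does close up. The only point worth making explicit in a write-up is that, since all $a_i=1$, the vector $\hat{a}_i$ is again all ones, so the recursion stays within the same family and the notation $(\mathcal{S}_N L_m)(\cdot\,|\,b)$ is unambiguous.
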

\begin{remark}
The generating function of the moments, \emph{i.e.} the Laplace
transform (known to have the infinite radius of convergence), in the case of $a_i=1$ for all $i=1\cdots M$ gives an
interesting generalization of the confluent hypergeometric function
corresponding to $M=N=1.$ It is an interesting open question to derive an ODE that the Laplace transform must satisfy in the general case. 
\end{remark}

We note that the structure of $\beta_{M,N}(a, b)$ depends on
rationality of $(a_1,\cdots, a_M)$ and $(b_1,\cdots, b_M).$ This is
clear from Definition \ref{bdef} as this structure is determined by
ratios of multiple gamma functions that have poles specified in
\eqref{poles}, confer also Theorem \ref{BarnesFactorization}. 
This phenomenon was studied in a different context
for the double gamma function in \cite{HacKuz}, \cite{HubKuz}, and \cite{Kuz}.

We conclude this section with the case of $M=N=2$ in order to illustrate the general theory with a non-trivial example. In addition, this case is also of a particular interest in the probabilistic theory of the Selberg integral.

Let $a_1=1$ and $a_2=\tau>0$ and write $\beta_{2, 2}(\tau, b),$
$\eta_{2,2}(q\,|\,\tau, b),$ and
$\Gamma_2(w\,|\,\tau)$ for
brevity. From Definition \ref{bdef} and Theorem \ref{main} we have
${\bf E}\bigl[\beta_{2, 2}(\tau, b)^q\bigr] = \eta_{2,2}(q\,|\,\tau,
b)$ for $\Re(q)>-b_0$ and $\eta_{2,2}(q\,|\,\tau, b)$ is given by
\begin{equation}\label{beta22}
\frac{\Gamma_2(q+b_0\,|\,\tau)}{\Gamma_2(b_0\,|\,\tau)}
\frac{\Gamma_2(b_0+b_1\,|\,\tau)}{\Gamma_2(q+b_0+b_1\,|\,\tau)}
\frac{\Gamma_2(b_0+b_2\,|\,\tau)}{\Gamma_2(q+b_0+b_2\,|\,\tau)}
\frac{\Gamma_2(q+b_0+b_1+b_2\,|\,\tau)}{\Gamma_2(b_0+b_1+b_2\,|\,\tau)}.
\end{equation}
Using \eqref{gamma1}, the functional equation in Theorem
\ref{FunctEquat} takes the form
\begin{align}
\eta_{2, 2}(q+1\,|\,\tau, b) & = \eta_{2, 2}(q\,|\,\tau, b)\,
\frac{\Gamma\bigl((q+b_0+b_1)/\tau\bigr)\Gamma\bigl((q+b_0+b_2)/\tau\bigr)}
{\Gamma\bigl((q+b_0)/\tau\bigr)\Gamma\bigl((q+b_0+b_1+b_2)/\tau\bigr)},
\\
\eta_{2, 2}(q+\tau\,|\,\tau, b) & = \eta_{2, 2}(q\,|\,\tau, b)\,
\frac{\Gamma(q+b_0+b_1)\Gamma(q+b_0+b_2)}
{\Gamma(q+b_0)\Gamma\bigl(q+b_0+b_1+b_2)}.
\end{align}
The factorization equations in Theorem \ref{Factorization} are
\begin{align} 
\eta_{2,2}(q\,|\,\tau, b) &  =
 \prod\limits_{k=0}^\infty\Bigl[
\frac{\Gamma((q+k+b_0)/\tau) }{\Gamma((k+b_0)/\tau)}
\frac{\Gamma((k+b_0+b_1)/\tau)}{\Gamma((q+k+b_0+b_1)/\tau)}\star
\nonumber \\ & \star
\frac{\Gamma((k+b_0+b_2)/\tau)}{\Gamma((q+k+b_0+b_2)/\tau)}
\frac{\Gamma((q+k+b_0+b_1+b_2)/\tau)}{\Gamma((k+b_0+b_1+b_2)/\tau)}\Bigr],
\\
\eta_{2,2}(q\,|\,\tau, b) & = \prod\limits_{k=0}^\infty\Bigl[
\frac{\Gamma(q+k\tau+b_0)}{\Gamma(k\tau+b_0)}
\frac{\Gamma(k\tau+b_0+b_1)}{\Gamma(q+k\tau+b_0+b_1)}\star
\nonumber \\ & \star
\frac{\Gamma(k\tau+b_0+b_2)}{\Gamma(q+k\tau+b_0+b_2)}
\frac{\Gamma(q+k\tau+b_0+b_1+b_2)}{\Gamma(k\tau+b_0+b_1+b_2)}\Bigr].\label{eta22infinfactor}
\end{align}
The integral moments in Theorem \ref{IntegralMoments} are
\begin{align}
{\bf E}\bigl[\beta_{2, 2}(\tau, b)^k\bigr] & =
\prod\limits_{l=0}^{k-1}
\Bigl[\frac{\Gamma\bigl((l+b_0+b_1)/\tau\bigr)\,\Gamma\bigl((l+b_0+b_2)/\tau\bigr)}{\Gamma\bigl((l
+b_0)/\tau\bigr)\,\Gamma\bigl((l+b_0+b_1+b_2)/\tau\bigr)}\Bigr], \\
{\bf E}\bigl[\beta_{2, 2}(\tau, b)^{-k}\bigr] & =
\prod\limits_{l=0}^{k-1} \Bigl[\frac{\Gamma\bigl((-(l+1)
+b_0)/\tau\bigr)\,\Gamma\bigl((-(l+1)+b_0+b_1+b_2)/\tau\bigr)}{\Gamma\bigl((-(l+1)+b_0+b_1)/\tau\bigr)\,
\Gamma\bigl((-(l+1)+b_0+b_2)/\tau\bigr)}\Bigr],
\end{align}
for $k<b_0.$ Finally, the positive integral moments in case of $\tau=1$ take on a generalized hypergeometric form by Theorem \ref{Momentsaspecial}.
Let $(b)_i\triangleq b(b+1)\cdots (b+i-1),$ then
\begin{equation}
{\bf E}\bigl[\beta_{2, 2}(\tau=1, b)^k\bigr] =
\Bigl[\frac{\Gamma(b_0+b_1)\Gamma(b_0+b_2)}{\Gamma(b_0)\Gamma(b_0+b_1+b_2)}\Bigr]^k
\prod\limits_{i=0}^{k-1} \frac{(b_0+b_1)_i (b_0+b_2)_i}{(b_0)_i
(b_0+b_1+b_2)_i}.
\end{equation}

\section{New Results on Barnes Beta Distributions}
In this section we will present our new results on general Barnes beta distributions $\beta_{M,N}(a, b).$ In Theorem \ref{barnesbetascaling} we establish the scaling invariance property of $\beta_{M,N}(a, b)$ that is based on Theorem \ref{scaling}. In Theorem \ref{NewShinProof} we give a new proof of the Shintani factorization stated in Theorem \ref{Factorization} that is based on Theorem \ref{ShinFactor}. In Theorem \ref{BarnesFactorization} we derive the Barnes factorization of the Mellin transform of $\beta_{M,N}(a, b)$ as a corollary of the Shintani factorization. In Theorem \ref{SLaction} we show that
a combination of the Barnes factorization and the functional equation of the Mellin transform stated in Theorem \ref{FunctEquat} gives an explicit formula for the action of $\mathcal{S}_N$ on the log-gamma function $L_M.$ This formula
implies Corollaries \ref{newmoments} and \ref{inequal1}, in which we give explicit formulas for the integral moments and mass at 1 of $\beta_{M,N}(a, b).$  Finally, in Corollary \ref{BarnesFactorSpecial} we treat the Barnes factorization in the special case of $a_i=1.$ We begin by reminding the reader of the following elementary observation that we made in \cite{Me13}. 
\begin{lemma}[Action of $\mathcal{S}_N$ on polynomials]\label{MyLemma}
Given $n=0\cdots N$ and arbitrary $q,$
\begin{subequations}\label{Sonpower}
\begin{align}
\bigl(\mathcal{S}_N x^n\bigr)(q\,|\,b) & = 0,\; n<N, \\
\bigl(\mathcal{S}_N x^n\bigr)(q\,|\,b) & = \bigl(\mathcal{S}_N x^n\bigr)(0\,|\,b), \;n=N.
\end{align}
\end{subequations}
\end{lemma}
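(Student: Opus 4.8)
The statement to prove is Lemma~\ref{MyLemma}: that the operator $\mathcal{S}_N$ annihilates polynomials of degree less than $N$, and maps a monic monomial of degree exactly $N$ to a constant independent of $q$.

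\medskip

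The plan is to reduce everything to a single combinatorial identity about finite differences. First I would observe that $\mathcal{S}_N$ is, up to the shift by $b_0$, precisely the iterated forward-difference operator built from the increments $b_{k_1},\dots,b_{k_N}$: writing $\Delta_{b_k}$ for the operator $(\Delta_{b_k}h)(x)\triangleq h(x+b_k)-h(x)$, one checks directly from Definition~\ref{Soperator} that
\begin{equation*}
(\mathcal{S}_N h)(q\,|\,b) = (-1)^N\bigl(\Delta_{b_1}\Delta_{b_2}\cdots\Delta_{b_N}h\bigr)(q+b_0),
\end{equation*}
because expanding the product of the $N$ difference operators reproduces exactly the alternating sum over all $p$-subsets $\{k_1<\dots<k_p\}$ of $\{1,\dots,N\}$ with sign $(-1)^{N-p}$, and the overall factor $(-1)^N$ matches the sign convention $(-1)^p$ in \eqref{S}. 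The operators $\Delta_{b_k}$ commute, so the order is irrelevant.

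\medskip

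With this reformulation the lemma is immediate from two standard facts about finite differences. For the first part, each $\Delta_{b_k}$ lowers the degree of a polynomial by exactly one (it kills constants and sends $x^m$ to $m b_k x^{m-1}+\dots$); hence the composition of $N$ of them annihilates every polynomial of degree $\le N-1$, giving \eqref{Sonpower}a. For the second part, applied to $x^N$ the composition $\Delta_{b_1}\cdots\Delta_{b_N}$ produces a polynomial of degree $0$, i.e.\ a constant; explicitly it equals $N!\,b_1 b_2\cdots b_N$, which in particular does not depend on $q$ (equivalently, on the argument $q+b_0$). Therefore $(\mathcal{S}_N x^N)(q\,|\,b)$ is the same constant for all $q$, and in particular equals its value at $q=0$, which is \eqref{Sonpower}b. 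One can alternatively prove the degree-lowering claim by a one-line induction on $N$ without ever invoking the explicit leading coefficient, if one wishes to keep the argument self-contained.

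\medskip

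There is essentially no obstacle here; the only thing requiring a moment's care is the bookkeeping of signs when unfolding the product $\prod_{k=1}^{N}\Delta_{b_k}$ into the subset-sum form of \eqref{S}, and the observation that the extra shift by $b_0$ is harmless since finite differences are translation-invariant operators (so the degree-and-leading-coefficient analysis is unaffected by evaluating at $q+b_0$ rather than $q$). Since this lemma was already recorded in \cite{Me13}, a short proof along these lines — or even just the remark that it follows from the elementary theory of finite differences — suffices.
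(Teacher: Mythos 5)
Your proof is correct. The identification of $\mathcal{S}_N$ (up to the harmless shift by $b_0$ and the factor $(-1)^N$) with the composite finite-difference operator $\Delta_{b_1}\cdots\Delta_{b_N}$ is accurate, the sign bookkeeping checks out against \eqref{S}, and the two standard facts you invoke (each $\Delta_{b_k}$ lowers polynomial degree by exactly one; $\Delta_{b_1}\cdots\Delta_{b_N}x^N=N!\,b_1\cdots b_N$ is a constant) immediately yield both parts of Lemma \ref{MyLemma}. The paper itself offers no proof here, merely recalling the lemma as an elementary observation from \cite{Me13}, so your finite-difference argument is a perfectly adequate and standard way to fill in that omission.
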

\begin{theorem}[Scaling invariance]\label{barnesbetascaling}
Let $\kappa>0.$ Then,
\begin{equation}
\beta^{\kappa}_{M, N}(\kappa\,a, \kappa\,b) \overset{{\rm in \,law}}{=}\beta_{M,
N}(a,\,b).
\end{equation}
\end{theorem}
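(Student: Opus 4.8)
The plan is to work at the level of Mellin transforms and reduce the claimed identity in law to the scaling identity for Barnes multiple gamma functions, Theorem~\ref{scaling}. Since the distribution $\beta_{M,N}(a,b)$ is supported on $(0,1]$ and is characterized by its Mellin transform $\eta_{M,N}(q\,|\,a,b)$ for $\Re(q)>-b_0$ (Theorem~\ref{main}), it suffices to show that the Mellin transform of $\beta^{\kappa}_{M,N}(\kappa a,\kappa b)$ equals that of $\beta_{M,N}(a,b)$. First I would write down the elementary change of variable: if $X\overset{\rm law}{=}\beta_{M,N}(\kappa a,\kappa b)$ then ${\bf E}[(X^\kappa)^q]={\bf E}[X^{\kappa q}]=\eta_{M,N}(\kappa q\,|\,\kappa a,\kappa b)$, so the whole problem is to verify the scaling relation
\begin{equation}\label{etascaling}
\eta_{M,N}(\kappa q\,|\,\kappa a,\kappa b)=\eta_{M,N}(q\,|\,a,b),\qquad \kappa>0.
\end{equation}

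Next I would expand the left-hand side using Definition~\ref{bdef}, namely $\eta_{M,N}(q\,|\,a,b)=\exp\bigl((\mathcal{S}_N L_M)(q\,|\,b)-(\mathcal{S}_N L_M)(0\,|\,b)\bigr)$, where each occurrence of $L_M$ has the parameter vector $a$. The operator $\mathcal{S}_N$ acts by shifting the argument of $L_M$ through sums $b_0+b_{k_1}+\cdots+b_{k_p}$; after the rescaling $b\mapsto\kappa b$, $q\mapsto\kappa q$, every such argument becomes $\kappa(q+b_0+b_{k_1}+\cdots+b_{k_p})$, i.e. $\kappa$ times the corresponding argument in the unscaled expression, while the parameter vector becomes $\kappa a$. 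So I would apply Theorem~\ref{scaling} termwise: $L_M(\kappa w\,|\,\kappa a)=L_M(w\,|\,a)-\tfrac{1}{M!}B_{M,M}(w\,|\,a)\log\kappa$ (taking logarithms in \eqref{scale}). Substituting this into both $(\mathcal{S}_N L_M)(\kappa q\,|\,\kappa b)$ and $(\mathcal{S}_N L_M)(0\,|\,\kappa b)$, the $L_M(\cdot\,|\,a)$ pieces reassemble into exactly $(\mathcal{S}_N L_M)(q\,|\,b)$ and $(\mathcal{S}_N L_M)(0\,|\,b)$, and the difference of these two is the Mellin transform we want.

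The crux of the argument is then showing that the leftover terms involving $\log\kappa$ cancel. These leftover terms are $-\tfrac{\log\kappa}{M!}\bigl[(\mathcal{S}_N B_{M,M})(q\,|\,b)-(\mathcal{S}_N B_{M,M})(0\,|\,b)\bigr]$. Here is where I expect the main (though routine) obstacle: I need to know that $B_{M,M}(x\,|\,a)$, as a polynomial in $x$, has degree $M$, so that under the hypothesis $M\le N$ Lemma~\ref{MyLemma} applies. If $M<N$ then $(\mathcal{S}_N B_{M,M})(q\,|\,b)=0$ for all $q$, so both bracketed terms vanish; if $M=N$ then $(\mathcal{S}_N B_{M,M})(q\,|\,b)=(\mathcal{S}_N B_{M,M})(0\,|\,b)$, so the bracket is again zero. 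The degree count follows from \eqref{Bdefa}: $B_{M,m}(x\,|\,a)=\frac{d^m}{dt^m}|_{t=0}[f(t)e^{-xt}]$ with $f(t)=t^M\prod_{j=1}^M(1-e^{-a_jt})^{-1}$, and since $f(t)=1+O(t)$ near $t=0$, the coefficient of $t^M$ in $f(t)e^{-xt}$ is $(-x)^M/M!$ plus lower-order-in-$x$ terms, giving $\deg B_{M,M}=M$ with leading coefficient $1$. With that in hand, \eqref{etascaling} holds, and uniqueness of the distribution with a given Mellin transform on $(0,1]$ (already used implicitly in Theorem~\ref{main}) finishes the proof.
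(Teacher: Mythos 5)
Your proposal is correct and follows essentially the same route as the paper: reduce the claim to $\eta_{M,N}(\kappa q\,|\,\kappa a,\kappa b)=\eta_{M,N}(q\,|\,a,b)$, apply Theorem~\ref{scaling} termwise through $\mathcal{S}_N$, and kill the leftover $\log\kappa$ terms with Lemma~\ref{MyLemma} using $M\le N$. One cosmetic slip: near $t=0$ one has $f(t)=\bigl(\prod_j a_j\bigr)^{-1}+O(t)$, not $1+O(t)$, so the leading coefficient of $B_{M,M}(x\,|\,a)$ is $(-1)^M/(a_1\cdots a_M)$ rather than $1$ (compare \eqref{B22}); this does not affect the argument, since only $\deg B_{M,M}=M\le N$ is needed.
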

\begin{proof}
By Theorem \ref{scaling} and the definition of the operator $\mathcal{S}_N$ in \eqref{Soperator}, we have the identity
\begin{equation}
\bigl(\mathcal{S}_N L_M\bigr)(\kappa\,q\,|\,\kappa\,a,\kappa\,b) = \bigl(\mathcal{S}_N L_M\bigr)(q\,|\,a, b) - \frac{\log\kappa}{M!}
\bigl(\mathcal{S}_N B_{M,M}\bigr)(q\,|\,b)
\end{equation}
so that by Lemma \ref{MyLemma} we obtain for $M\leq N$
\begin{align}
\log\eta_{M, N}(\kappa\,q\,|\,\kappa\,a,\kappa\,b) & = \bigl(\mathcal{S}_N L_M\bigr)(\kappa\,q\,|\,\kappa\,a,\kappa\,b) - \bigl(\mathcal{S}_N L_M\bigr)(0\,|\,\kappa\,a,\kappa\,b), \nonumber \\
& = \log\eta_{M, N}(q\,|\,a,\,b).
\end{align}
The result follows.
\end{proof}
\begin{theorem}[Shintani factorization]\label{NewShinProof}
The infinite factorization of the Mellin transform in Theorem \ref{Factorization} is a corollary of Theorem \ref{ShinFactor}.
\end{theorem}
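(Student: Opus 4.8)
The plan is to apply Theorem~\ref{ShinFactor} with $g(t)\equiv1$ (so that the gamma functions appearing are the Barnes ones), take logarithms of \eqref{generalfactorization}, apply the combinatorial operator $\mathcal{S}_N$, and subtract the value at the origin; the polynomial correction terms $\Psi$ and $\phi$ will then be annihilated precisely because $M\le N$. Concretely, fix $i\in\{1,\dots,M\}$ and $x>0$. Since $\Gamma_M(w\,|\,a)$ is symmetric in $(a_1,\dots,a_M)$, one may apply \eqref{generalfactorization} with $g\equiv1$, the theorem's index $M+1$ identified with our $M$, and its distinguished parameter $a_{M+1}$ identified with $a_i$. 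Writing $L_M=\log\Gamma_M$, this gives, for $\Re(w)>0$,
\[
L_M(w\,|\,a)=L_{M-1}(w\,|\,\hat a_i)+\phi_M(w,x\,|\,\hat a_i,a_i)+\sum_{k=1}^\infty P_k(w),
\]
where $P_k(w)=L_{M-1}(w+ka_i\,|\,\hat a_i)-\Psi_M(w,ka_i\,|\,\hat a_i)+c_k$, the constant $c_k$ is independent of $w$, the series $\sum_kP_k$ converges locally uniformly (Theorem~\ref{ShinFactor}), and $\phi_M(\,\cdot\,,x\,|\,\hat a_i,a_i)$ and each $\Psi_M(\,\cdot\,,ka_i\,|\,\hat a_i)$ are polynomials in their first argument of degree at most $M$.

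I would then substitute $w=q+b_0+b_{k_1}+\dots+b_{k_p}$ with $\Re(q)>-b_0$, so that all arguments have positive real part, and form the alternating sum defining $\mathcal{S}_N$; since $\mathcal{S}_N$ is a fixed finite linear combination of shift-and-evaluate maps, it commutes with the convergent series $\sum_kP_k$. Applying $\mathcal{S}_N$ to $L_M(\,\cdot\,|\,a)$ and subtracting the value at $q=0$ to form $\log\eta_{M,N}(q\,|\,a,b)$ via Definition~\ref{bdef}, three simplifications occur: (i) $\mathcal{S}_N$ annihilates every $w$-independent term, in particular the $c_k$ and all dependence on $x$, because $\sum_{p=0}^N(-1)^p\binom Np=0$ for $N\ge1$; (ii) since the correction polynomials have degree at most $M\le N$, Lemma~\ref{MyLemma} makes $\mathcal{S}_N\phi_M(\,\cdot\,,x)$ and each $\mathcal{S}_N\Psi_M(\,\cdot\,,ka_i)$ independent of $q$, so they cancel against their own values at $q=0$; (iii) for the surviving pieces, $(\mathcal{S}_NL_{M-1}(\,\cdot+ka_i\,|\,\hat a_i))(q\,|\,b)=(\mathcal{S}_NL_{M-1})(q+ka_i\,|\,\hat a_i,b)$ directly from the definition of $\mathcal{S}_N$, whence $(\mathcal{S}_NL_{M-1})(q+ka_i\,|\,\hat a_i,b)-(\mathcal{S}_NL_{M-1})(ka_i\,|\,\hat a_i,b)=\log\bigl(\eta_{M-1,N}(q+ka_i\,|\,\hat a_i,b)/\eta_{M-1,N}(ka_i\,|\,\hat a_i,b)\bigr)$ by Definition~\ref{bdef} together with $\eta_{M-1,N}(0\,|\,\cdot)=1$.

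Collecting terms, the bare summand $L_{M-1}(w\,|\,\hat a_i)$ contributes $\log\eta_{M-1,N}(q\,|\,\hat a_i,b)$, which is exactly the $k=0$ factor of \eqref{infinprod2} (again using $\eta_{M-1,N}(0\,|\,\hat a_i,b)=1$), while the $k\ge1$ terms give the remaining factors, so that $\log\eta_{M,N}(q\,|\,a,b)=\sum_{k=0}^\infty\log\bigl(\eta_{M-1,N}(q+ka_i\,|\,\hat a_i,b)/\eta_{M-1,N}(ka_i\,|\,\hat a_i,b)\bigr)$. This is \eqref{infinprod2} for $\Re(q)>-b_0$, and it extends to all $q\in\mathbb{C}-(-\infty,-b_0]$ by analytic continuation, convergence of the product being inherited from that of the Shintani product for $\Gamma$ (all but finitely many factors have argument with large positive real part). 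The only genuinely delicate point is the bookkeeping: correctly matching the index shift of Theorem~\ref{ShinFactor} to the present $(M,N)$ setting and, above all, confirming that the correction polynomials have degree at most $M$, which is exactly what makes the hypothesis $M\le N$ of Theorem~\ref{Factorization} sufficient for Lemma~\ref{MyLemma} to eliminate them; interchanging $\mathcal{S}_N$ with the infinite sum is routine but should be recorded.
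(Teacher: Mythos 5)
Your proposal is correct and follows essentially the same route as the paper: apply Theorem \ref{ShinFactor} with $g\equiv1$ (index shifted so its $M+1$ is the present $M$ and its $a_{M+1}$ is $a_i$), take logarithms, apply $\mathcal{S}_N$ and subtract the value at $q=0$, and use Lemma \ref{MyLemma} together with the fact that $\Psi_M$ and $\phi_M$ are polynomials in $w$ of degree $M\leq N$ to eliminate all correction terms, leaving the telescoping identity for $(\mathcal{S}_N L_{M-1})$ that is equivalent to \eqref{infinprod2}. The paper's proof is just a terser version of this argument; your additional bookkeeping (index matching, killing the $w$-independent constants, interchange of $\mathcal{S}_N$ with the series, analytic continuation in $q$) fills in details the paper leaves implicit.
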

\begin{proof}
By Theorem \ref{ShinFactor}, Lemma \ref{MyLemma}, and the fact that \(\Psi_{M}(w,y\,|\,a)\) and \(\phi_{M}(w,x\,|a)\) are polynomials in $w$ of degree $M$ and $M\leq N,$ we can write
\begin{align}
\bigl(\mathcal{S}_N L_M\bigr)(q\,|\,a, b) - \bigl(\mathcal{S}_N L_M\bigr)(0\,|\,a, b) & = \sum\limits_{k=0}^\infty \Bigl[\bigl(\mathcal{S}_N L_{M-1}\bigr)(q+ka_M\,|\,a, b) - \nonumber \\
& -\bigl(\mathcal{S}_N L_{M-1}\bigr)(ka_M\,|\,a, b)\Bigr],
\end{align}
which is equivalent to \eqref{infinprod2}.
\end{proof}
\begin{remark}
The original proof of Theorem \ref{Factorization} in \cite{Me13} used a special
property of the L\'evy-Khinchine representation given in Theorem \ref{main}
to verify \eqref{funcequatsolut} directly, which is equivalent to \eqref{infinprod2} by the following identity that we noted in \cite{Me13}.
\begin{equation}\label{fe2}
\eta_{M, N}(q\,|\,a,\,b_0+x)\,\eta_{M, N}(x\,|\,a,\,b) = \eta_{M, N}(q+x\,|\,a,\,b), \; x>0.
\end{equation}
The new proof explains that the origin  of the infinite product factorization of the Mellin transform is the Shintani factorization of the multiple gamma function.
\end{remark}

Next, we proceed to state and prove a new infinite product representation of the Mellin transform of $\beta_{M,N}(a,b)$ that corresponds to the classical infinite product representation of the multiple gamma function given by Barnes
in \cite{mBarnes}.
\begin{theorem}[Barnes factorization]\label{BarnesFactorization}
Given $0\leq M\leq N$ and $q\in\mathbb{C}-(-\infty, -b_0],$
\begin{align}
\eta_{M,N}(q\,|\,a, b) & = \prod\limits_{n_1,\cdots ,n_M=0}^\infty \Bigl[
\frac{b_0+\Omega}{q+b_0+\Omega}\prod\limits_{j_1=1}^N \frac{q+b_0+b_{j_1}+\Omega}
{b_0+b_{j_1}+\Omega} \star \nonumber \\
& \star \prod\limits_{j_1<j_2}^N \frac{b_0+b_{j_1}+b_{j_2}+\Omega}
{q+b_0+b_{j_1}+b_{j_2}+\Omega}
\prod\limits_{j_1<j_2<j_3}^N \frac{q+b_0+b_{j_1}+b_{j_2}+b_{j_3}+\Omega}{b_0+b_{j_1}+b_{j_2}+b_{j_3}+\Omega}\cdots\Bigr], \label{barnesfactor}
\end{align}
where $\Omega\triangleq \sum_{i=1}^M n_i \, a_i$ and the last ratio in the product is $\bigl((q+b_0+\sum_{j=1}^N b_j+\Omega)/(b_0+\sum_{j=1}^N b_j+\Omega)\bigr)^{(-1)^{N-1}}.$ Probabilistically,
\begin{equation}\label{probfuncequatsolut}
\beta_{M, N}(a,\,b) \overset{{\rm in \,law}}{=}
\prod\limits_{n_1,\cdots ,n_M=0}^\infty \beta_{0, N}(b_0+\Omega).
\end{equation}
\end{theorem}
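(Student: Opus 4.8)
The plan is to obtain \eqref{barnesfactor} by iterating the Shintani factorization of Theorem \ref{Factorization} over the variables $a_1,\dots,a_M$ one at a time, reducing the index $M$ down to $0$. Concretely, Theorem \ref{Factorization} (with $i=M$) expresses $\eta_{M,N}(q\,|\,a,b)$ as an infinite product over $k$ of ratios of $\eta_{M-1,N}$ evaluated at shifted first arguments $q+ka_M$ and $ka_M$; equivalently, in law, $\beta_{M,N}(a,b)\overset{\rm in\,law}{=}\prod_{k\geq0}\beta_{M-1,N}(\hat a_M,\,b_0+ka_M)$ by Corollary \ref{solution}. Applying the same factorization to each $\beta_{M-1,N}(\hat a_M,\,b_0+k_Ma_M)$ in the variable $a_{M-1}$ produces a double product $\prod_{k_{M-1},k_M\geq0}\beta_{M-2,N}(\hat a_{M-1,M},\,b_0+k_{M-1}a_{M-1}+k_Ma_M)$, and continuing inductively down to $M=0$ yields $\beta_{M,N}(a,b)\overset{\rm in\,law}{=}\prod_{n_1,\dots,n_M\geq0}\beta_{0,N}(b_0+\Omega)$ with $\Omega=\sum_i n_ia_i$, which is \eqref{probfuncequatsolut}. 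On the Mellin side this is the same statement: $\eta_{M,N}(q\,|\,a,b)=\prod_{n_1,\dots,n_M\geq0}\eta_{0,N}(q\,|\,b_0+\Omega,\,b)$.

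It then remains to identify $\eta_{0,N}(q\,|\,c,b)$ explicitly, i.e. the base case $M=0$. Here $L_0(w)=\log\Gamma_0(w)=-\log w$ by the convention $\Gamma_0(w)=1/w$ recorded after \eqref{feq}, so $\eta_{0,N}(q\,|\,c,b)=\exp\big((\mathcal S_N L_0)(q\,|\,b)-(\mathcal S_N L_0)(0\,|\,b)\big)$ becomes the finite alternating product
\begin{equation*}
\eta_{0,N}(q\,|\,c,b)=\prod_{p=0}^{N}\ \prod_{k_1<\dots<k_p=1}^{N}\left(\frac{c+b_{k_1}+\dots+b_{k_p}}{q+c+b_{k_1}+\dots+b_{k_p}}\right)^{(-1)^{p}},
\end{equation*}
where the $p=0$ term is $c/(q+c)$. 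Substituting $c=b_0+\Omega$ and collecting terms by the parity of $p$ gives exactly the bracketed expression in \eqref{barnesfactor}, including the stated form of the last ratio $\big((q+b_0+\sum_{j=1}^N b_j+\Omega)/(b_0+\sum_{j=1}^N b_j+\Omega)\big)^{(-1)^{N-1}}$ coming from $p=N$. This also shows that $\beta_{0,N}(c)$ is a well-defined law on $(0,1]$ (consistent with Theorem \ref{main}, since $0\leq N$ always), with the compound Poisson / mass-at-$1$ structure of Corollary \ref{Structure} when $N\geq1$.

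The main point requiring care is the \emph{convergence} of the $M$-fold infinite product over $n_1,\dots,n_M$ and the legitimacy of reorganizing the iterated single products (each guaranteed convergent by Theorem \ref{Factorization}) into one multi-indexed product. I would control this by estimating $\log\eta_{0,N}(q\,|\,b_0+\Omega,b)$ as $|\Omega|\to\infty$: since $\mathcal S_N$ annihilates polynomials of degree $<N$ and $-\log(w)$ has an asymptotic expansion whose error after the degree-$(N-1)$ term is $O(|w|^{-N})$, Lemma \ref{MyLemma} forces $\log\eta_{0,N}(q\,|\,b_0+\Omega,b)=O(|\Omega|^{-N-1})$ (one power better, from the next term), and $\sum_{n_1,\dots,n_M\geq0}|\Omega|^{-N-1}<\infty$ precisely because $M\leq N$. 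Absolute convergence then justifies both the rearrangement and the passage between the probabilistic and Mellin-transform formulations, using \eqref{fe2} to multiply the factors. The hypothesis $M\leq N$ is exactly what makes the summability work and is the natural place where it enters.
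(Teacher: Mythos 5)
Your proposal is correct and follows essentially the same route as the paper: the paper proves \eqref{barnesfactor} by induction on $M$, substituting the $M-1$ formula into the Shintani factorization \eqref{infinprod2} with base case $M=0$ given by $\Gamma_0(w)=1/w$, and obtains \eqref{probfuncequatsolut} exactly as you do, either from the Mellin-transform identity $\eta_{M,N}(q\,|\,a,b)=\prod\eta_{0,N}(q\,|\,b_0+\Omega)$ or directly from \eqref{funcequatsolut}; your unrolled iteration is the same argument. Your explicit decay estimate $\log\eta_{0,N}(q\,|\,b_0+\Omega,b)=O(|\Omega|^{-N-1})$ justifying the rearrangement into a multi-indexed product is a welcome extra that the paper leaves implicit.
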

\begin{proof}
We will give proof by induction on $M$ using Theorem \ref{Factorization}.
Let $N$ be fixed. If $M=0,$ \eqref{barnesfactor} is true by Definition \ref{bdef} and $\Gamma_0(w)=1/w.$ Assume \eqref{barnesfactor} is true for
$M-1.$ Substituting \eqref{barnesfactor} for $M-1$ into \eqref{infinprod2}, writing $n_M$ for the index $k$ in \eqref{infinprod2}, and denoting $\widetilde{\Omega}\triangleq  \sum_{i=1}^{M-1} n_i \, a_i,$ we obtain for
$\eta_{M,N}(q\,|\,a, b)$
\begin{equation}
\prod\limits_{n_1,\cdots ,n_M=0}^\infty \Bigl[
\frac{{\displaystyle\frac{b_0+\widetilde{\Omega}}{q+b_0+\widetilde{\Omega}+n_M\,a_M}\prod\limits_{j_1=1}^N \frac{q+b_0+b_{j_1}+\widetilde{\Omega}+n_M\,a_M}{b_0+b_{j_1}+\widetilde{\Omega}}\cdots}}
{{\displaystyle\frac{b_0+\widetilde{\Omega}}{b_0+\widetilde{\Omega}+n_M\,a_M}
\prod\limits_{j_1=1}^N \frac{b_0+b_{j_1}+\widetilde{\Omega}+n_M\,a_M}{b_0+b_{j_1}+\widetilde{\Omega}}\cdots}}\Bigr],
\end{equation}
which equals \eqref{barnesfactor} after obvious cancellations as $\Omega=\widetilde{\Omega}+n_M\,a_M.$ To prove \eqref{probfuncequatsolut}
it is sufficient to note that \eqref{barnesfactor} is equivalent to
\begin{equation}
\eta_{M,N}(q\,|\,a, b) = \prod\limits_{n_1,\cdots ,n_M=0}^\infty \eta_{0,N}(q\,|\, b_0+\Omega),
\end{equation}
which in turn is equivalent to \eqref{probfuncequatsolut}. 
Alternatively, \eqref{probfuncequatsolut} follows from \eqref{funcequatsolut} directly by induction.
\end{proof}
\begin{theorem}[Action of $\mathcal{S}_N$ on $L_M$]\label{SLaction}
Let $M<N$ and $\Omega\triangleq \sum_{i=1}^M n_i \, a_i.$ Then,
\begin{align}
\exp\Bigl(-\bigl(\mathcal{S}_N L_M\bigr)(q\,|\,a,b)\Bigr) & = \prod\limits_{n_1,\cdots ,n_M=0}^\infty \Bigl[
\frac{q+b_0+\Omega}{\prod\limits_{j_1=1}^N q+b_0+b_{j_1}+\Omega}
\star \nonumber \\
& \star\frac{\prod\limits_{j_1<j_2}^N q+b_0+b_{j_1}+b_{j_2}+\Omega}
{\prod\limits_{j_1<j_2<j_3}^N q+b_0+b_{j_1}+b_{j_2}+b_{j_3}+\Omega}\cdots\Bigr], 
\end{align}
where the last factor is $\bigl(q+b_0+\sum_{j=1}^N b_j+\Omega\bigr)^{(-1)^N}.$
\end{theorem}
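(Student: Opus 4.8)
The plan is to recast the claim as an additive identity for $\bigl(\mathcal{S}_N L_M\bigr)(q\,|\,a,b)$ and then prove that. Since $L_0(w)=\log\Gamma_0(w)=-\log w$, the bracketed factor on the right-hand side is exactly $\prod_{p=0}^N\prod_{k_1<\cdots<k_p=1}^N\bigl(q+b_0+b_{k_1}+\cdots+b_{k_p}+\Omega\bigr)^{(-1)^p}=\exp\bigl(-(\mathcal{S}_N L_0)(q+\Omega\,|\,b)\bigr)$, so the assertion is equivalent to
\begin{equation*}
\bigl(\mathcal{S}_N L_M\bigr)(q\,|\,a,b)=\sum_{n_1,\cdots,n_M=0}^\infty \bigl(\mathcal{S}_N L_0\bigr)(q+\Omega\,|\,b),\qquad \Omega=\sum_{i=1}^M n_i a_i. \tag{$\star$}
\end{equation*}
Both sides are holomorphic on $q\in\mathbb{C}-(-\infty,-b_0]$ (local uniform convergence of the series is addressed below), so it suffices to verify $(\star)$ for real $q>0$ and extend by analytic continuation.

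The primary approach, following the strategy announced earlier, combines the functional equation with the Barnes factorization. Because $M<N$ one may apply \eqref{fe1} of Theorem \ref{FunctEquat} at level $M+1$ with distinguished index $i=M+1$ and an arbitrary auxiliary parameter $a_{M+1}>0$; with $a=(a_1,\cdots,a_M)$ this reads $\exp\bigl(-(\mathcal{S}_N L_M)(q\,|\,a,b)\bigr)=\eta_{M+1,N}(q+a_{M+1}\,|\,a,a_{M+1},b)/\eta_{M+1,N}(q\,|\,a,a_{M+1},b)$. I then substitute the Barnes factorization of Theorem \ref{BarnesFactorization} at level $M+1$, in the equivalent form $\eta_{M+1,N}(\,\cdot\,|\,a,a_{M+1},b)=\prod_{n_1,\cdots,n_M,n_{M+1}\geq0}\eta_{0,N}(\,\cdot\,|\,b_0+\Omega+n_{M+1}a_{M+1})$, into both numerator and denominator. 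Within each cell $(n_1,\cdots,n_M)$ the $q$-free denominators cancel between the two Barnes products, and the surviving ratio telescopes in $n_{M+1}$; its $\binom{N}{p}$ factors attached to the $p$-subsets each tend to a finite limit, the potentially divergent leading behaviour in $n_{M+1}$ cancelling because $\sum_{p=0}^N(-1)^p\binom{N}{p}=0$ for $N\geq1$ (an instance of Lemma \ref{MyLemma}). After the collapse one is left with $\prod_{n_1,\cdots,n_M\geq0}\exp\bigl(-(\mathcal{S}_N L_0)(q+\Omega\,|\,b)\bigr)$, i.e.\ $(\star)$. The main obstacle is making this rigorous: one must justify reordering the double product (over cells and over $n_{M+1}$) and evaluate the $n_{M+1}\to\infty$ limit with the cancellation of leading terms, which is cleanest by working with partial products at real $q>0$, where every factor is positive.

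A more transparent route, which I would use to settle convergence as well, goes through Ruijsenaars' formula. Applying $\mathcal{S}_N$ to \eqref{key} for $L_M(w\,|\,a)$ (so $f$ as in \eqref{fdef}): every Bernoulli polynomial $B_{M,k}(\,\cdot\,|\,a)$ with $0\leq k\leq M$ has degree $<N$, hence is annihilated by $\mathcal{S}_N$ by Lemma \ref{MyLemma}, and since $\mathcal{S}_N$ carries $w\mapsto e^{-wt}$ to $e^{-(q+b_0)t}\prod_{j=1}^N(1-e^{-b_jt})$ one is left with $\bigl(\mathcal{S}_N L_M\bigr)(q\,|\,a,b)=\int_0^\infty t^{-1}e^{-(q+b_0)t}\prod_{j=1}^N(1-e^{-b_jt})\prod_{i=1}^M(1-e^{-a_it})^{-1}\,dt$, integrable at $t=0$ precisely because $N>M$. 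Expanding $\prod_{i=1}^M(1-e^{-a_it})^{-1}=\sum_{n_1,\cdots,n_M\geq0}e^{-\Omega t}$ and interchanging sum and integral by Tonelli (valid at $q>0$, all terms nonnegative) gives $\sum_n\int_0^\infty t^{-1}e^{-(q+b_0+\Omega)t}\prod_j(1-e^{-b_jt})\,dt$; the same reduction applied to $L_0$ identifies the summand with $(\mathcal{S}_N L_0)(q+\Omega\,|\,b)$, which is $(\star)$. Finiteness of the right-hand side is automatic from Tonelli, and local uniform convergence on $\mathbb{C}-(-\infty,-b_0]$, needed for the continuation step, follows from $(\mathcal{S}_N L_0)(q+\Omega\,|\,b)=O(\Omega^{-N})$ — apply Lemma \ref{MyLemma} to the first $N-1$ Taylor coefficients of $\log$ — together with $\sum_{n\in\mathbb{N}^M,\,n\neq 0}|\Omega|^{-N}<\infty$ when $N>M$. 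Either route then yields the theorem.
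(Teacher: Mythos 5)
Your proposal is correct, and it actually contains two arguments. Your primary route is essentially the paper's own proof: the paper also obtains the theorem by combining the functional equation of Theorem \ref{FunctEquat} with the Barnes factorization of Theorem \ref{BarnesFactorization} and telescoping the product over the distinguished index; the only cosmetic difference is that the paper works with $\eta_{M,N}(q+a_M\,|\,a,b)/\eta_{M,N}(q\,|\,a,b)$ for $M\leq N$ and relabels $M-1\rightarrow M$, whereas you adjoin an auxiliary parameter $a_{M+1}$ and work one level up (equivalent bookkeeping). You are also more careful than the paper about the reordering of the infinite product and the cancellation of the divergent leading factors in the telescoping limit, which the paper passes over with ``the product over $n_M$ simplifies to.'' Your second route is genuinely different from the paper's proof: applying $\mathcal{S}_N$ directly to Ruijsenaars' formula \eqref{key}, killing the Bernoulli subtraction terms by Lemma \ref{MyLemma} (degree $\leq M<N$), expanding $\prod_i(1-e^{-a_it})^{-1}$ as a geometric series and invoking Tonelli at real $q>0$, then identifying each summand with $(\mathcal{S}_N L_0)(q+\Omega\,|\,b)$ and continuing analytically using the $O(\Omega^{-N})$ decay. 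This argument is self-contained and more elementary: it bypasses Theorems \ref{Factorization} and \ref{BarnesFactorization} altogether (it is close in spirit to the L\'evy--Khinchine computation behind \eqref{LKH} and \eqref{Pof12}), and it settles the convergence questions explicitly, whereas the paper's route has the structural merit of exhibiting the identity as a direct corollary of the Barnes factorization and the functional equation. Both of your routes, as described, yield the theorem.
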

\begin{proof}
This is a corollary of Theorems \ref{FunctEquat} and \ref{BarnesFactorization}.
Let $M\leq N$ and $\widetilde{\Omega}\triangleq  \sum_{i=1}^{M-1} n_i \, a_i.$
By \eqref{barnesfactor} we get for $\eta_{M,N}(q+a_M\,|\,a, b)/\eta_{M,N}(q\,|\,a, b)$ 
\begin{equation}
\prod\limits_{n_1,\cdots ,n_{M}=0}^\infty \Bigl[
\frac{q+b_0+\widetilde{\Omega}+n_M\,a_M}{q+b_0+\widetilde{\Omega}+n_M\,a_M+a_M}\prod\limits_{j_1=1}^N \frac{q+b_0+b_{j_1}+\widetilde{\Omega}+n_M\,a_M+a_M}{q+b_0+b_{j_1}+\widetilde{\Omega}+n_M\,a_M} \cdots\Bigr].
\end{equation}
Now, the product over $n_M$ simplifies to
\begin{align}
& \prod\limits_{n_M=0}^\infty  \Bigl[
\frac{q+b_0+\widetilde{\Omega}+n_M\,a_M}{q+b_0+\widetilde{\Omega}+n_M\,a_M+a_M}\prod\limits_{j_1=1}^N \frac{q+b_0+b_{j_1}+\widetilde{\Omega}+n_M\,a_M+a_M}{q+b_0+b_{j_1}+\widetilde{\Omega}+n_M\,a_M} \cdots\Bigr] = \nonumber \\
& \frac{q+b_0+\widetilde{\Omega}}{\prod\limits_{j_1=1}^N q+b_0+b_{j_1}+\widetilde{\Omega}}
\frac{\prod\limits_{j_1<j_2}^N q+b_0+b_{j_1}+b_{j_2}+\widetilde{\Omega}}
{\prod\limits_{j_1<j_2<j_3}^N q+b_0+b_{j_1}+b_{j_2}+b_{j_3}+\widetilde{\Omega}}\cdots
\end{align}
The result now follows by \eqref{fe1} and relabeling $M-1\rightarrow M.$
\end{proof}
\begin{remark}
Barnes \cite{mBarnes} gave an infinite product formula for $\Gamma_M(w\,|\,a),$
which in our normalization of multiple gamma functions takes on the form
\begin{equation}
\Gamma^{-1}_M(w\,|\,a) = e^{P(w\,|\,a)}\,w\prod\limits_{n_1,\cdots , n_M=0}^\infty
{}' \Bigl(1+\frac{w}{\Omega}\Bigr)\exp\Bigl(\sum_{k=1}^M \frac{(-1)^k}{k}\frac{w^k}{\Omega^k}\Bigr), 
\end{equation}
where $P(w\,|\,a)$ is a polynomial in $w$ of degree $M,$ $\Omega$ is as in Theorem \ref{BarnesFactorization}, and the prime indicates
that the product is over all indices except $n_1=\cdots =n_M=0.$ Given this result, Theorems \ref{BarnesFactorization} and \ref{SLaction} follow
by Lemma \ref{MyLemma}. 
\end{remark}

Theorem \ref{SLaction} implies new explicit formulas for the moments and mass at 1. 
\begin{corollary}[Moments]\label{newmoments} Let $k\in\mathbb{N},$ $M\leq N,$ and 
$\Omega\triangleq\sum_{i=1}^{M-1} n_i \, a_i.$
\begin{align}
{\bf E}\bigl[\beta_{M, N}(a, b)^{k a_M}\bigr] &  =
\prod\limits_{l=0}^{k-1}\prod\limits_{n_1,\cdots ,n_{M-1}=0}^\infty \Bigl[
\frac{l\,a_M+b_0+\Omega}{\prod\limits_{j_1=1}^N l\,a_M+b_0+b_{j_1}+\Omega}
\star \nonumber \\
& \star\frac{\prod\limits_{j_1<j_2}^N l\,a_M+b_0+b_{j_1}+b_{j_2}+\Omega}
{\prod\limits_{j_1<j_2<j_3}^N l\,a_M+b_0+b_{j_1}+b_{j_2}+b_{j_3}+\Omega}\cdots\Bigr], \label{posmomentsBarnes}
\end{align} 
\end{corollary}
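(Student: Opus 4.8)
The statement to prove is Corollary \ref{newmoments}, which gives an explicit infinite‐product formula for the positive integral moments ${\bf E}[\beta_{M,N}(a,b)^{ka_M}]$.

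My plan: The moment ${\bf E}[\beta_{M,N}(a,b)^{ka_M}] = \eta_{M,N}(ka_M\,|\,a,b)$ by the Existence Theorem. The natural route is to combine Theorem \ref{IntegralMoments} (Moments), which already expresses this moment as $\exp(-\sum_{l=0}^{k-1}(\mathcal{S}_N L_{M-1})(la_M\,|\,\hat a_M, b))$, with Theorem \ref{SLaction} (Action of $\mathcal{S}_N$ on $L_M$), which gives an explicit infinite‐product formula for $\exp(-(\mathcal{S}_N L_{M-1})(q\,|\,\hat a_M,b))$ when $M-1<N$. First I would apply \eqref{posmom} with $a_i = a_M$ to write the moment as a finite product over $l=0,\dots,k-1$ of the factors $\exp(-(\mathcal{S}_N L_{M-1})(la_M\,|\,\hat a_M, b))$. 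Then I would substitute the formula from Theorem \ref{SLaction}, applied with $M$ replaced by $M-1$ (so that $\Omega = \sum_{i=1}^{M-1} n_i a_i$, matching the statement of the corollary) and $q$ replaced by $la_M$, into each factor. Assembling the resulting double product (over $l$ and over $n_1,\dots,n_{M-1}$) yields exactly \eqref{posmomentsBarnes}.

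The one point requiring care is the hypothesis: Theorem \ref{SLaction} is stated for $M<N$, so applying it with $M\to M-1$ requires $M-1<N$, i.e.\ $M\le N$, which is precisely the hypothesis of the corollary. When $M=N$ one should note that \eqref{posmom} holds for all $M\le N$, and the formula of Theorem \ref{SLaction} with the index shifted by one still applies since $M-1 < N$; so there is no gap. I would remark that the $M=N$ boundary is accommodated automatically, and that the last factor in the inner product is $(la_M+b_0+\sum_{j=1}^N b_j+\Omega)^{(-1)^N}$, inherited verbatim from Theorem \ref{SLaction}.

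The main obstacle — and it is a mild one — is purely bookkeeping: making sure the alternating pattern of numerators and denominators (governed by the $(-1)^p$ in the operator $\mathcal{S}_N$) and the relabeling $M-1\to M$ in Theorem \ref{SLaction} are transcribed consistently, so that the product over subsets $j_1<\dots<j_p$ in the final formula matches the output of Theorem \ref{SLaction} exactly. No analytic subtlety (convergence of the infinite product, interchange of product and exponential) arises here, since all of it was already established in Theorems \ref{BarnesFactorization}, \ref{SLaction}, and \ref{IntegralMoments}; the proof is therefore a two‐line deduction: cite \eqref{posmom}, substitute Theorem \ref{SLaction} with $M\mapsto M-1$ and $q\mapsto la_M$, and collect terms.
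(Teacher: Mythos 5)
Your proposal is correct and follows essentially the same route as the paper: the paper's proof is exactly the combination of \eqref{posmom} from Theorem \ref{IntegralMoments} (taken with $i=M$) and the infinite-product formula of Theorem \ref{SLaction} applied with the index shifted to $M-1$, which is why the hypothesis $M\leq N$ suffices. Your bookkeeping remarks (the relabeling $M-1\to M$, the last factor $\bigl(l\,a_M+b_0+\sum_{j=1}^N b_j+\Omega\bigr)^{(-1)^N}$) are consistent with the paper's statement.
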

\begin{proof}
This is an immediate corollary of \eqref{posmom} in Theorem \ref{IntegralMoments}
and the formula for $\exp\bigl(-(\mathcal{S}_N L_M)(q\,|\,a,b)\bigr)$ in Theorem \ref{SLaction}.
\end{proof}
The formula for the negative moments is obtained in the same way by \eqref{negmom}.
\begin{corollary}[Mass at 1]\label{inequal1} Let $M<N$ and $\Omega\triangleq\sum_{i=1}^{M} n_i \, a_i.$
\begin{equation}
{\bf P}\bigl[\beta_{M,N}(a,b)=1\bigr] = 
\prod\limits_{n_1,\cdots ,n_M=0}^\infty \Bigl[
\frac{b_0+\Omega}{\prod\limits_{j_1=1}^N b_0+b_{j_1}+\Omega}
\frac{\prod\limits_{j_1<j_2}^N b_0+b_{j_1}+b_{j_2}+\Omega}
{\prod\limits_{j_1<j_2<j_3}^N b_0+b_{j_1}+b_{j_2}+b_{j_3}+\Omega}\cdots\Bigr].
\end{equation}
\end{corollary}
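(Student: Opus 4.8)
The plan is to obtain the mass at $1$ by composing two results already in hand. The entry point is Corollary~\ref{Structure}: since $M<N$, the random variable $-\log\beta_{M,N}(a,b)$ is compound Poisson, so $\beta_{M,N}(a,b)$ carries an atom at the right endpoint $1$ whose weight is given by \eqref{Pof12}, which for $f$ as in \eqref{fdef} (so that $L^{(f)}_M(w)=L_M(w\,|\,a)$) reads
\begin{equation*}
\mathbf{P}\bigl[\beta_{M,N}(a,b)=1\bigr] = \exp\bigl(-(\mathcal{S}_N L_M)(0\,|\,a,b)\bigr).
\end{equation*}
It thus remains only to evaluate $\exp(-(\mathcal{S}_N L_M)(q\,|\,a,b))$ at $q=0$, and Theorem~\ref{SLaction} --- proved above precisely under the hypothesis $M<N$ --- supplies the infinite product over the lattice $\Omega=\sum_{i=1}^M n_i a_i$ that does exactly this. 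Setting $q=0$ in that identity and inserting it into the display gives the asserted formula, the last factor $\bigl(b_0+\sum_{j=1}^N b_j+\Omega\bigr)^{(-1)^N}$ of Theorem~\ref{SLaction} turning into the last ratio exhibited in the statement.

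The one point worth a remark is that the substitution $q=0$ is admissible. Theorem~\ref{SLaction} is valid for $q\in\mathbb{C}-(-\infty,-b_0]$, and since $b_0>0$ the value $q=0$ lies inside this domain; there the infinite product converges exactly as in Theorem~\ref{BarnesFactorization}, its convergence being inherited --- through that theorem's inductive proof --- from the Shintani factorization of Theorem~\ref{Factorization}. Consequently there is no genuine obstacle here: the analytic content was already spent in establishing Corollary~\ref{Structure} and Theorem~\ref{SLaction}, and the present statement is simply their juxtaposition at $q=0$. One could alternatively start from the integral representation \eqref{Pof1} for the atom and expand $\prod_{j=1}^N(1-e^{-b_j t})$ directly, but routing through Theorem~\ref{SLaction} is cleaner and makes the product structure manifest.
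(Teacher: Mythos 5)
Your proposal is correct and follows exactly the paper's own route: the paper also obtains the mass at $1$ by combining \eqref{Pof12} of Corollary \ref{Structure} with the product formula of Theorem \ref{SLaction} evaluated at $q=0$. The extra remarks on the admissibility of $q=0$ and the alternative via \eqref{Pof1} are harmless additions but do not change the argument.
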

\begin{proof}
This is an immediate corollary of \eqref{Pof12} in Corollary \ref{Structure} and
Theorem \ref{SLaction}.
\end{proof}

We will conclude this section with the special case of Theorem \ref{BarnesFactorization} that corresponds to $a_i=1$ for all $i=1\cdots M$
and appears in Section 6 below. 
\begin{corollary}[Barnes factorization for $a_i=1$]\label{BarnesFactorSpecial} Let $0\leq M\leq N$ and $a_i=1$ for all $i=1\cdots M.$ Then, 
\begin{align}
\eta_{M,N}(q\,|\,1, b) & = \prod\limits_{k=0}^\infty \Bigl[
\frac{b_0+k}{q+b_0+k}\prod\limits_{j_1=1}^N \frac{q+b_0+b_{j_1}+k}
{b_0+b_{j_1}+k} \prod\limits_{j_1<j_2}^N \frac{b_0+b_{j_1}+b_{j_2}+k}
{q+b_0+b_{j_1}+b_{j_2}+k} \star \nonumber \\
& \star \prod\limits_{j_1<j_2<j_3}^N \frac{q+b_0+b_{j_1}+b_{j_2}+b_{j_3}+k}{b_0+b_{j_1}+b_{j_2}+b_{j_3}+k}\cdots\Bigr]^{(k\,|\,M)},
\label{barnesfactorspecial} \\
(k\,|\,M) & \triangleq \sum\limits_{m=1}^M \binom{k-1}{m-1}\binom{M}{m}.
\end{align}
\end{corollary}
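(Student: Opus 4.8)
The plan is to derive \eqref{barnesfactorspecial} from Theorem \ref{BarnesFactorization} by collecting, for each fixed nonnegative integer $k$, the multiplicity with which the shift $\Omega=\sum_{i=1}^M n_i$ equals $k$ as $(n_1,\dots,n_M)$ ranges over $\mathbb{N}^M=\{0,1,2,\dots\}^M$. Since all $a_i=1$, the bracketed factor in \eqref{barnesfactor} depends on $(n_1,\dots,n_M)$ only through $\Omega$, so the infinite product over $\mathbb{N}^M$ reorganizes as a product over $k=0,1,2,\dots$ of the same bracket (with $\Omega$ replaced by $k$) raised to the power $r_M(k)\triangleq\#\{(n_1,\dots,n_M)\in\mathbb{N}^M:\ n_1+\dots+n_M=k\}$. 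The entire content of the corollary is therefore the combinatorial identity $r_M(k)=(k\,|\,M)=\sum_{m=1}^M\binom{k-1}{m-1}\binom{M}{m}$, together with a remark on why the rearrangement of the (conditionally convergent) infinite product is legitimate.

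First I would record that $r_M(k)=\binom{k+M-1}{M-1}$, the standard count of weak compositions of $k$ into $M$ parts. Then the task reduces to the binomial identity $\binom{k+M-1}{M-1}=\sum_{m=1}^M\binom{k-1}{m-1}\binom{M}{m}$ for all integers $k\geq 0$, $M\geq 1$. I would prove this by a Vandermonde-type argument: classify each weak composition $(n_1,\dots,n_M)$ of $k$ by the number $m$ of indices $i$ with $n_i>0$; there are $\binom{M}{m}$ ways to choose which parts are positive, and $\binom{k-1}{m-1}$ ways to write $k$ as an ordered sum of exactly $m$ strictly positive integers (stars and bars on the positive parts). Summing over $m=1,\dots,M$ (with the $k=0$ case handled by the convention that the empty sum on the right equals $\binom{M-1}{M-1}=1=r_M(0)$, noting $\binom{-1}{-1}$ is interpreted as $1$; cleaner to just check $k=0$ separately) gives the claim. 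Alternatively one can cite the Vandermonde convolution $\sum_m\binom{k-1}{m-1}\binom{M}{m}=\sum_m\binom{k-1}{m-1}\binom{M}{M-m}=\binom{k-1+M}{M-1}$ directly. This step is entirely routine.

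The one point requiring a word of care—and the main, though minor, obstacle—is that the product in \eqref{barnesfactor} converges only conditionally, so rearranging it into a product over level sets $\{\Omega=k\}$ must be justified rather than taken for granted. Here I would argue that within each level set $\{n_1+\dots+n_M=k\}$ there are only finitely many terms (namely $r_M(k)$ of them), all equal, so their product over that finite set is unambiguous and equals the single bracket raised to the power $r_M(k)$; and the outer product over $k$ then matches the grouping already present implicitly in the Shintani/Barnes factorization, whose convergence was established in Theorem \ref{Factorization} and Theorem \ref{BarnesFactorization}. In other words, the rearrangement groups the original product exactly along the successive applications of \eqref{infinprod2}, so no new convergence issue arises—this is the same grouping used in the proof of Theorem \ref{BarnesFactorization} itself, now made explicit in the case $a_i=1$.

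Putting the pieces together: substitute $a_i=1$ into \eqref{barnesfactor}, observe the bracket depends only on $\Omega$, group the $\mathbb{N}^M$-indexed product by the value $k$ of $\Omega$, use $r_M(k)=\binom{k+M-1}{M-1}$, and finally rewrite this multiplicity as $(k\,|\,M)=\sum_{m=1}^M\binom{k-1}{m-1}\binom{M}{m}$ via the Vandermonde identity above. This yields \eqref{barnesfactorspecial} and completes the proof. I expect the write-up to be short, with the binomial identity being the only computation and the conditional-convergence remark being the only subtlety worth flagging.
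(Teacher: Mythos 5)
Your proposal is correct and follows essentially the same route as the paper: specialize \eqref{barnesfactor} to $a_i=1$, group the product over $(n_1,\dots,n_M)$ by the value $k$ of $\Omega$, and identify the multiplicity with $(k\,|\,M)$ by counting non-negative integer solutions of $n_1+\cdots+n_M=k$ via compositions of $k$ into exactly $m$ positive parts. Your added remarks (the Vandermonde closed form $\binom{k+M-1}{M-1}$ and the justification of the regrouping) are harmless elaborations of the same argument.
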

\begin{proof}
This follows from \eqref{barnesfactor} using the formula for the number of compositions of $k$ into exactly $m$ parts, confer \cite{Bressoud}, \emph{i.e.} $(k\,|\,M)$ equals the number of non-negative integer solutions to $n_1+\cdots +n_M=k.$
\end{proof}
\begin{remark} The analytic structure of $\eta_{M,N}(a,b)$ is not fully understood, even in the case of $M=0.$ The latter with $b_i=i,$ $i=1\cdots N,$ is of particular interest as it occurs in the context of the Riemann xi function, confer Remark \ref{Splitting} below.
\end{remark}
\section{New Results on Selberg Integral}

In this section we will consider the case of $M=N=2$ of Barnes beta distributions and state and prove a ``master'' theorem (Theorem \ref{general}), which gives a new, purely probabilistic proof of the key result of \cite{Me} on the extension of the Selberg integral as a function of its dimension to the Mellin transform of a probability distribution (Theorem \ref{BSM} and Corollary \ref{Uniqueness}). In particular, we show that Theorem \ref{general} is a corollary of Barnes multiplication in Theorem \ref{multiplic}. In Theorem \ref{scalinvar}
we establish the involution invariance of the components of the Selberg integral distribution using the scaling invariance of Barnes beta distributions in Theorem \ref{barnesbetascaling} and apply it to the Mellin transform in Corollary \ref{Mtransforminvol}.
\begin{theorem}\label{general}
Let $a\triangleq(a_1,\,a_2),$ $a_i>0,$ and $x\triangleq
(x_1,\,x_2),$ $x_1,\,x_2>0,$ then 
\begin{equation}\label{eqgeneral}
{\bf E}\big[M_{(a, x)}^q\bigr] \triangleq
\frac{\Gamma_2(x_1-q\,|\,a)}{\Gamma_2(x_1\,|\,a)}
\frac{\Gamma_2(x_2-q\,|\,a)}{\Gamma_2(x_2\,|\,a)}
\frac{\Gamma_2(a_1+a_2-q\,|\,a)}{\Gamma_2(a_1+a_2\,|\,a)}
\frac{\Gamma_2(x_1+x_2-q\,|\,a)}{\Gamma_2(x_1+x_2-2q\,|\,a)}
\end{equation}
is the Mellin transform of a probability distribution $M_{(a, x)}$
on $(0,\,\infty).$ Let $L$ be lognormal
\begin{equation}\label{Ldefin}
L \triangleq \exp\bigl(\mathcal{N}(0,\,4\log 2/a_1a_2)\bigr),
\end{equation}
and let $X_1,\,X_2,\,X_3$ have the $\beta^{-1}_{2, 2}(a, b)$
distribution with the parameters
\begin{align}
X_1 &\triangleq \beta_{2,2}^{-1}\Bigl(a,
b_0=x_1,\,b_1=b_2=(x_2-x_1)/2\Bigl), \label{X1}\\
X_2 & \triangleq \beta_{2,2}^{-1}\Bigl(a,
b_0=(x_1+x_2)/2,\,b_1=a_1/2,\,b_2=a_2/2\Bigr), \label{X2}\\
X_3 & \triangleq \beta_{2,2}^{-1}\Bigl(a, b_0=a_1+a_2,\,
b_1=b_2=(x_1+x_2-a_1-a_2)/2\Bigl). \label{X3}
\end{align}
Then, $M_{(a, x)}$ has the factorization
\begin{equation}\label{generaldecomp}
M_{(a, x)} \overset{{\rm in \,law}}{=}
2^{-\bigl(2(x_1+x_2)-(a_1+a_2)\bigr)/a_1a_2}\, L\,X_1\,X_2\,X_3.
\end{equation}
In particular, $\log M_{(a, x)}$ is absolutely continuous and
infinitely divisible.
\end{theorem}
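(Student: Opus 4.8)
The plan is to recognize the right-hand side of \eqref{eqgeneral} as a product of Mellin transforms of known distributions, using Barnes multiplication (Theorem \ref{multiplic}) with $k=2$ to handle the ``doubling'' term $\Gamma_2(x_1+x_2-2q\,|\,a)$ in the denominator, which is the only factor whose argument is not of the form $c-q$. First I would apply Theorem \ref{multiplic} with $k=2$ to $\Gamma_2(x_1+x_2-2q\,|\,a)$, writing it (after absorbing the $2^{-B_{2,2}(\cdot)/2}$ prefactor, whose exponent is an explicit quadratic in $q$) as a product of four shifted double gamma values $\Gamma_2\bigl((x_1+x_2)/2 - q + (p_1 a_1 + p_2 a_2)/2 \,|\, a\bigr)$ over $p_1, p_2 \in \{0,1\}$. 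Substituting this into the denominator of the last fraction in \eqref{eqgeneral} turns that fraction into a product of ratios of the form $\Gamma_2(c-q)/\Gamma_2(c)$ (times the Gaussian/lognormal prefactor coming from the $B_{2,2}$ term), which is exactly the shape appearing in the Mellin transform \eqref{beta22} of $\beta_{2,2}^{-1}$.

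Next I would match the resulting collection of $\Gamma_2$-ratios against the Mellin transforms of $X_1, X_2, X_3$ defined in \eqref{X1}--\eqref{X3}. Each $X_i = \beta_{2,2}^{-1}(a, b_0, b_1, b_2)$ has ${\bf E}[X_i^q] = \eta_{2,2}(-q\,|\,a,b)$, which by \eqref{beta22} is a product of four ratios $\Gamma_2(b_0-q)/\Gamma_2(b_0)$, $\Gamma_2(b_0+b_1)/\Gamma_2(b_0+b_1-q)$, etc. With the parameter choices given, the four ``numerator'' arguments $b_0, b_0+b_1, b_0+b_2, b_0+b_1+b_2$ for $X_1$ are $x_1, (x_1+x_2)/2, (x_1+x_2)/2, x_2$; for $X_3$ they are $a_1+a_2, (x_1+x_2)/2, (x_1+x_2)/2, x_1+x_2$; and for $X_2$ they are $(x_1+x_2)/2$, $(x_1+a_2+x_2)/2$, $(x_1+x_2+a_1)/2$, $(x_1+x_2+a_1+a_2)/2$. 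One checks that in the product ${\bf E}[X_1^q]{\bf E}[X_2^q]{\bf E}[X_3^q]$ all the ``spurious'' factors cancel in pairs, leaving precisely $\Gamma_2(x_1-q)/\Gamma_2(x_1)\cdot \Gamma_2(x_2-q)/\Gamma_2(x_2)\cdot \Gamma_2(a_1+a_2-q)/\Gamma_2(a_1+a_2)$ times the four $\Gamma_2\bigl((x_1+x_2)/2 - q + (\cdot)/2\bigr)$ factors from $X_2$'s and $X_3$'s denominators, which reassemble (via the $k=2$ multiplication formula run backwards) into $\Gamma_2(x_1+x_2-q)/\Gamma_2(x_1+x_2-2q)$. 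Collecting the $2$-power prefactors from the scaling term in Theorem \ref{multiplic} and from $X_2$'s and $X_3$'s own multiplication-derived constants gives the scalar $2^{-(2(x_1+x_2)-(a_1+a_2))/a_1a_2}$, and the leftover Gaussian exponent $\exp(q^2 \cdot 2\log 2/a_1 a_2)$ is exactly the Mellin transform ${\bf E}[L^q]$ of the lognormal $L$ in \eqref{Ldefin}. Since a finite product of Mellin transforms of independent positive random variables is the Mellin transform of their product, \eqref{generaldecomp} follows, and in particular the right-hand side of \eqref{eqgeneral} \emph{is} a Mellin transform of a probability distribution on $(0,\infty)$, establishing the first claim.

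Finally, absolute continuity and infinite divisibility of $\log M_{(a,x)}$ follow from the factorization: $\log L$ is Gaussian, hence absolutely continuous and infinitely divisible; each $-\log\beta_{2,2}$ is infinitely divisible and, by Corollary \ref{Structure} (the case $M=N$), absolutely continuous; a sum of independent infinitely divisible random variables is infinitely divisible, and the presence of even one absolutely continuous summand (indeed the Gaussian) makes the sum absolutely continuous. The main obstacle I anticipate is purely bookkeeping: verifying that the $2$-power prefactors and the Gaussian exponent combine exactly as claimed requires carefully evaluating $B_{2,2}(w\,|\,a)$ (a quadratic polynomial in $w$, obtainable from \eqref{Bdefa}--\eqref{fdef}) at the several shifted arguments and checking that the $q$-linear and $q$-independent pieces cancel against the analogous constants produced when $X_2$ and $X_3$ are themselves expanded via Theorem \ref{multiplic}; tracking signs and the combinatorics of which of the sixteen-plus $\Gamma_2$ factors cancel against which is where an error is most likely to creep in.
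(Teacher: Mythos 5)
Your proposal is correct and follows essentially the same route as the paper: apply Barnes multiplication (Theorem \ref{multiplic}) with $k=2$ to $\Gamma_2(x_1+x_2-2q\,|\,a)$, match the resulting $\Gamma_2$-ratios against the Mellin transforms of $X_1,X_2,X_3$ from \eqref{beta22}, and use the explicit quadratic $B_{2,2}$ in \eqref{B22} to identify the lognormal factor and the constant $2$-power; the paper merely runs the computation in the opposite direction, starting from ${\bf E}[X_1^q]{\bf E}[X_2^q]{\bf E}[X_3^q]$ and reassembling \eqref{eqgeneral}. The only slight imprecision is attributing extra $2$-power constants to $X_2$ and $X_3$ themselves (their Mellin transforms contribute none); all powers of $2$ come from the multiplication formula applied to the last gamma ratio, exactly as in the paper's bookkeeping.
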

\begin{proof}
Recalling the Mellin transform of $\beta_{2,2}(a, b)$ and
definition of $X_1, X_2, X_3$ in \eqref{X1}--\eqref{X3}, we
can write for ${\bf E}\bigl[X_1^q\bigr]{\bf E}\bigl[X_2^q\bigr]{\bf
E}\bigl[X_3^q\bigr]$ after some simplification
\begin{gather}
\frac{\Gamma_2(x_1-q\,|\,a)}{\Gamma_2(x_1\,|\,a)}
\frac{\Gamma_2(x_2-q\,|\,a)}{\Gamma_2(x_2\,|\,a)}
\frac{\Gamma_2(a_1+a_2-q\,|\,a)}{\Gamma_2(a_1+a_2\,|\,a)}
\frac{\Gamma_2(x_1+x_2-q\,|\,a)}{\Gamma_2(x_1+x_2\,|\,a)} \star
\nonumber \\
\star
\frac{\Gamma_2((x_1+x_2)/2\,|\,a)}{\Gamma_2((x_1+x_2)/2-q\,|\,a)}
\frac{\Gamma_2((x_1+x_2+a_1)/2\,|\,a)}{\Gamma_2((x_1+x_2+a_1)/2-q\,|\,a)}
\frac{\Gamma_2((x_1+x_2+a_2)/2\,|\,a)}{\Gamma_2((x_1+x_2+a_2)/2-q\,|\,a)}\star
\nonumber \\
\star
\frac{\Gamma_2((x_1+x_2+a_1+a_2)/2\,|\,a)}{\Gamma_2((x_1+x_2+a_1+a_2)/2-q\,|\,a)}
.
\end{gather}
Theorem \ref{multiplic} gives us
\begin{gather}
\Gamma_2(x_1+x_2-2q\,|\,a) = 2^{-B_{2, 2}(x_1+x_2-2q\,|\,a)/2}
\Gamma_2((x_1+x_2)/2-q\,|\,a)
\star\nonumber \\ \star
\Gamma_2((x_1+x_2+a_1)/2-q\,|\,a)\star\Gamma_2((x_1+x_2+a_2)/2-q\,|\,a)
\star\nonumber \\ \star
\Gamma_2((x_1+x_2+a_1+a_2)/2-q\,|\,a).
\end{gather}
Hence, we can write for ${\bf E}\bigl[X_1^q\bigr]{\bf
E}\bigl[X_2^q\bigr]{\bf E}\bigl[X_3^q\bigr]$
\begin{align}
{\bf E}\bigl[X_1^q\bigr]{\bf
E}\bigl[X_2^q\bigr]{\bf E}\bigl[X_3^q\bigr] & =
\frac{2^{B_{2, 2}(x_1+x_2\,|\,a)/2}}{2^{B_{2,
2}(x_1+x_2-2q\,|\,a)/2}}
\frac{\Gamma_2(x_1-q\,|\,a)}{\Gamma_2(x_1\,|\,a)}
\frac{\Gamma_2(x_2-q\,|\,a)}{\Gamma_2(x_2\,|\,a)}\star \nonumber \\ & \star
\frac{\Gamma_2(a_1+a_2-q\,|\,a)}{\Gamma_2(a_1+a_2\,|\,a)}
\frac{\Gamma_2(x_1+x_2-q\,|\,a)}{\Gamma_2(x_1+x_2-2q\,|\,a)}.
\end{align}
Now, using the identity
\begin{equation}\label{B22}
B_{2,2}(x\,|\,a) =
\frac{x^2}{a_1a_2}-\frac{x(a_1+a_2)}{a_1a_2}+\frac{a_1^2+3a_1a_2+a_2^2}{6a_1a_2},
\end{equation}
and the definition of $L$ in \eqref{Ldefin}, we can write
\begin{align}
{\bf E}\bigl[L^q\bigr]{\bf E}\bigl[X_1^q\bigr]{\bf
E}\bigl[X_2^q\bigr]{\bf E}\bigl[X_3^q\bigr]= & 2^{\bigl(2(x_1+x_2)-(a_1+a_2)\bigr)q/a_1a_2}\frac{\Gamma_2(x_1-q\,|\,a)}{\Gamma_2(x_1\,|\,a)}
\frac{\Gamma_2(x_2-q\,|\,a)}{\Gamma_2(x_2\,|\,a)}
\star\nonumber \\ & \star
\frac{\Gamma_2(a_1+a_2-q\,|\,a)}{\Gamma_2(a_1+a_2\,|\,a)}
\frac{\Gamma_2(x_1+x_2-q\,|\,a)}{\Gamma_2(x_1+x_2-2q\,|\,a)}.
\end{align}
This proves that the expression on the right-hand side of
\eqref{eqgeneral} is in fact the Mellin transform of the probability
distribution on $(0,\,\infty)$ that is given by the right-hand side of \eqref{generaldecomp}.
Its properties follow from the known properties of the normal and $\beta_{2,2}(a, b)$ distributions.
\end{proof}

We now proceed to give the new proof of our results on the Selberg integral that is based on Theorem \ref{general}. The starting point is the following remarkable formula due to Selberg
\cite{Selberg}. Given $\tau>1,$ $\lambda_i>-1/\tau,$ and $1\leq l<
\tau,$
\begin{gather}
\int\limits_{[0,\,1]^l} \prod_{i=1}^l s_i^{\lambda_1}(1-s_i)^{\lambda_2}\, \prod\limits_{i<j}^l |s_i-s_j|^{-2/\tau} ds_1\cdots ds_l =
\nonumber \\ \prod_{k=0}^{l-1}\frac{\Gamma(1-(k+1)/\tau)
\Gamma(1+\lambda_1-k/\tau)\Gamma(1+\lambda_2-k/\tau)}
{\Gamma(1-1/\tau)\Gamma(2+\lambda_1+\lambda_2-(l+k-1)/\tau)}, \label{Selberg}
\end{gather}
confer \cite{ForresterBook} for a modern treatment.
The reason for restricting $\tau>1$ is that in this case it is known that
there exists a positive probability distribution (constructed by integrating $s^{\lambda_1}(1-s)^{\lambda_2}$ with respect to the limit lognormal stochastic measure) such that its $l$th
moment is given by the left-hand side of \eqref{Selberg}, confer \cite{Me} and
references therein. In an attempt to compute this distribution in \cite{Me} we
constructed a probability distribution having this property.
\begin{theorem}\label{BSM}
Let $\tau>1$  and $\lambda_i>-1/\tau.$ Define the
Mellin transform
\begin{align}
{\bf E}\bigl[M^q_{(\tau, \lambda_1, \lambda_2)}\bigr] & \triangleq
\tau^{\frac{q}{\tau}} (2\pi)^{q}\,\Gamma^{-q}\bigl(1-1/\tau\bigr)
\frac{\Gamma_2(1-q+\tau(1+\lambda_1)\,|\,\tau)}{\Gamma_2(1+\tau(1+\lambda_1)\,|\,\tau)}\star
\nonumber \\ & \star
\frac{\Gamma_2(1-q+\tau(1+\lambda_2)\,|\,\tau)}{\Gamma_2(1+\tau(1+\lambda_2)\,|\,\tau)}
\frac{\Gamma_2(-q+\tau\,|\,\tau)}{\Gamma_2(\tau\,|\,\tau)}\star
\nonumber \\ & \star
\frac{\Gamma_2(2-q+\tau(2+\lambda_1+\lambda_2)\,|\,\tau)}{\Gamma_2(2-2q+\tau(2+\lambda_1+\lambda_2)\,|\,\tau)}
\label{M}
\end{align}
for $\Re(q)<\tau.$ Then, $M_{(\tau, \lambda_1, \lambda_2)}$ is a
probability distribution on $(0,\infty)$ and its positive ($0\leq l <
\tau$) and negative ($l\in\mathbb{N}$) integral moments satisfy
\begin{align}
{\bf E}\bigl[M^l_{(\tau, \lambda_1, \lambda_2)}\bigr] & =
\prod_{k=0}^{l-1} \frac{\Gamma(1-(k+1)/\tau)}{\Gamma(1-1/\tau)}
\frac{\Gamma(1+\lambda_1-k/\tau)\Gamma(1+\lambda_2-k/\tau)}
{\Gamma(2+\lambda_1+\lambda_2-(l+k-1)/\tau)}, \label{Selbergmomentsverified} \\
{\bf E}\bigl[M^{-l}_{(\tau, \lambda_1, \lambda_2)}\bigr] & =
\prod_{k=0}^{l-1}
\frac{\Gamma\bigl(2+\lambda_1+\lambda_2+(l+2+k)/\tau\bigr)
\Gamma\bigl(1-1/\tau\bigr) }{
\Gamma\bigl(1+\lambda_1+(k+1)/\tau\bigr)\Gamma\bigl(1+\lambda_2+(k+1)/\tau\bigr)
\Gamma\bigl(1+k/\tau\bigr) }. \label{Selbergmomentsnegative}
\end{align}
$\log M_{(\tau, \lambda_1, \lambda_2)}$ is absolutely continuous and
infinitely divisible.
\end{theorem}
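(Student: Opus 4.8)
The plan is to obtain Theorem~\ref{BSM} as a specialization of Theorem~\ref{general}, up to an auxiliary Fr\'echet factor. First I would set $a=(a_1,a_2)=(1,\tau)$ and $x_1=1+\tau(1+\lambda_1)$, $x_2=1+\tau(1+\lambda_2)$. Since $\tau>1$ and $\lambda_i>-1/\tau$ one has $x_i=1+\tau(1+\lambda_i)>\tau>0$, and (relabeling if necessary so that $\lambda_1\le\lambda_2$) the parameters of the variables $X_1,X_2,X_3$ in \eqref{X1}--\eqref{X3} are all positive, so Theorem~\ref{general} applies and produces a probability distribution $M_{(a,x)}$ on $(0,\infty)$ with Mellin transform \eqref{eqgeneral} and the factorization \eqref{generaldecomp}. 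With this choice of $(a,x)$ the first, second and fourth Barnes $\Gamma_2$-ratios in \eqref{M} coincide term by term with those in \eqref{eqgeneral}; the sole discrepancy is the third ratio, where the argument $\tau$ appears in place of $a_1+a_2=1+\tau$.

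To absorb this discrepancy I would apply the fundamental functional equation \eqref{feq} with $M=2$, $i=1$, $a_1=1$, which yields
\[
\frac{\Gamma_2(\tau-q\,|\,\tau)}{\Gamma_2(\tau\,|\,\tau)}=\frac{\Gamma_1(\tau-q\,|\,\tau)}{\Gamma_1(\tau\,|\,\tau)}\,\frac{\Gamma_2(1+\tau-q\,|\,\tau)}{\Gamma_2(1+\tau\,|\,\tau)},
\]
and then \eqref{gamma1}, which identifies $\Gamma_1(\tau-q\,|\,\tau)/\Gamma_1(\tau\,|\,\tau)=\tau^{-q/\tau}\Gamma(1-q/\tau)$. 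Combined with the scalar prefactor $\tau^{q/\tau}(2\pi)^q\Gamma^{-q}(1-1/\tau)$ in \eqref{M}, the entire right-hand side of \eqref{M} becomes $\bigl(2\pi/\Gamma(1-1/\tau)\bigr)^q\,\Gamma(1-q/\tau)\,{\bf E}\bigl[M_{(a,x)}^q\bigr]$ for $\Re(q)<\tau$. If $E$ denotes a standard exponential variable, then ${\bf E}[E^q]=\Gamma(1+q)$, so $Y\triangleq\bigl(2\pi/\Gamma(1-1/\tau)\bigr)E^{-1/\tau}$ --- a rescaled Fr\'echet variable of shape $\tau$ --- satisfies ${\bf E}[Y^q]=\bigl(2\pi/\Gamma(1-1/\tau)\bigr)^q\Gamma(1-q/\tau)$, with domain of convergence exactly $\Re(q)<\tau$. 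Taking $Y$ independent of $M_{(a,x)}$ and setting $M_{(\tau,\lambda_1,\lambda_2)}\triangleq Y\,M_{(a,x)}$ then makes \eqref{M} the Mellin transform of a genuine probability distribution on $(0,\infty)$, and composing with \eqref{generaldecomp} exhibits $M_{(\tau,\lambda_1,\lambda_2)}$ as an independent product of a constant, the Fr\'echet $Y$, a lognormal $L$, and three $\beta^{-1}_{2,2}$ variables.

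The regularity statements follow from this factorization: $\log M_{(\tau,\lambda_1,\lambda_2)}$ is a sum of independent summands, one of which ($\log L$) is Gaussian and hence absolutely continuous, so $\log M_{(\tau,\lambda_1,\lambda_2)}$ is absolutely continuous; and it is infinitely divisible because each summand is --- $\log L$ is Gaussian, $\log Y=b-\tfrac1\tau\log E$ has characteristic function $e^{itb}\,\Gamma(1-it/\tau)$, an infinitely divisible (Gumbel-type) function, and each $\log X_j=-\log\beta_{2,2}(\cdots)$ is infinitely divisible on $[0,\infty)$ by Theorem~\ref{main}.

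For the moment formulas I would not use the factorization but substitute integers directly into \eqref{M}. For $l\in\mathbb{N}$ with $0\le l<\tau$, putting $q=l$ and telescoping each Barnes $\Gamma_2$-ratio with \eqref{feq} gives $\Gamma_2(x-l\,|\,\tau)/\Gamma_2(x\,|\,\tau)=\prod_{k=0}^{l-1}\Gamma_1(x-l+k\,|\,\tau)$, and likewise for the fourth ratio, whose numerator and denominator differ by $l$. Passing to Euler's $\Gamma$ via \eqref{gamma1}, all powers of $\tau$ and of $2\pi$ cancel against the prefactor, the surviving $\Gamma^{-l}(1-1/\tau)=\prod_{k=0}^{l-1}\Gamma^{-1}(1-1/\tau)$ supplies the denominator gamma of Selberg's formula, and \eqref{Selbergmomentsverified} --- which is precisely the Selberg integral \eqref{Selberg} --- emerges. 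The negative moments \eqref{Selbergmomentsnegative} come out identically by taking $q=-l$ and telescoping the other way, $\Gamma_2(x+l\,|\,\tau)/\Gamma_2(x\,|\,\tau)=\prod_{k=0}^{l-1}\Gamma_1(x+k\,|\,\tau)^{-1}$. The one genuinely delicate point is the first step: recognizing that \eqref{M} is \emph{not} merely a deterministic rescaling of the distribution of Theorem~\ref{general} but carries a hidden Fr\'echet factor, and getting the shift-by-one bookkeeping through \eqref{feq} and \eqref{gamma1} exactly right so that the residual Euler-gamma factor $\Gamma(1-q/\tau)$ and the explicit $\tau,\,2\pi,\,\Gamma(1-1/\tau)$ prefactors assemble into a recognizable Mellin transform; everything afterward is routine telescoping of multiple-gamma ratios.
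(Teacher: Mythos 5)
Your proposal is correct and follows essentially the same route as the paper: it specializes Theorem \ref{general} with $a=(1,\tau)$, $x_i=1+\tau(1+\lambda_i)$, accounts for the mismatch in the third $\Gamma_2$-ratio via the functional equation \eqref{feq} and \eqref{gamma1} by adjoining an independent Fr\'echet factor $Y$ with Mellin transform $\Gamma(1-q/\tau)$ (exactly the paper's $Y(\tau)$ in \eqref{Ydef}--\eqref{Ytransform}, with the constants merely packaged differently than in \eqref{Decomposition}), and derives the integral moments by telescoping $\Gamma_2$-ratios into products of $\Gamma_1$'s, just as in the paper's proof.
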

\begin{remark}
In the special case of $\lambda_1=\lambda_2=0$ Theorem \ref{BSM}
first appeared in
\cite{Me4}. The general case was first considered by \cite{FLDR},
who gave an equivalent expression for the right-hand side of
\eqref{M} and so matched the positive integral moments without proving analytically that it
corresponds to a probability distribution. The first proof of Theorem \ref{BSM}
in full generality was given in \cite{Me}.
\end{remark}
\begin{corollary}\label{Uniqueness}
The Stieltjes moment problems for $M_{(\tau, \lambda_1, \lambda_2)}$ and $M_{(\tau, \lambda_1, \lambda_2)}^{-1}$ are indeterminate.
Let $\widetilde{M}_{(\tau, \lambda_1, \lambda_2)}$ be a probability distribution on $(0,\,\infty)$ such that
\begin{equation}
\widetilde{M}_{(\tau, \lambda_1, \lambda_2)} \overset{{\rm in \,law}}{=} L\,N,
\end{equation}
\begin{equation}\label{NYdef}
L \triangleq \exp\bigl(\mathcal{N}(0,\,4\log 2/\tau)\bigr),
\end{equation}
\emph{i.e.} $\log L$ is a zero-mean normal with variance $4\log
2/\tau,$
and $N$ is some distribution that is independent of $L.$ If the negative moments of
$\widetilde{M}_{(\tau, \lambda_1, \lambda_2)}$ equal those of $M_{(\tau, \lambda_1, \lambda_2)}$ in \eqref{Selbergmomentsnegative},
then $\widetilde{M}_{(\tau, \lambda_1, \lambda_2)}\overset{{\rm in \,law}}{=}M_{(\tau, \lambda_1, \lambda_2)}.$
\end{corollary}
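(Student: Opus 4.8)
The plan is to prove Corollary \ref{Uniqueness} in two parts: first the indeterminacy of the moment problems, and then the uniqueness statement under the hypothesis that the unknown distribution is a product of a lognormal with the prescribed variance and an independent factor. For the indeterminacy, I would invoke the classical Krein/Carleman-type criterion: a probability law on $(0,\infty)$ whose logarithm contains a Gaussian component of strictly positive variance has moments growing fast enough to fail Carleman's condition, hence the Stieltjes moment problem is indeterminate. Concretely, by Theorem \ref{general} (applied with the parameters that produce $M_{(\tau,\lambda_1,\lambda_2)}$, via the identification made in the proof of Theorem \ref{BSM}) we have $M_{(\tau,\lambda_1,\lambda_2)}\overset{{\rm in\,law}}{=}(\text{const})\,L\,X_1X_2X_3$ with $L$ lognormal of variance $4\log 2/\tau>0$; since $L$ alone is lognormal and therefore moment-indeterminate, and the factors $X_1,X_2,X_3$ are bounded (supported on $(0,1]$, being inverses of $\beta_{2,2}$ variables — wait, inverses, so supported on $[1,\infty)$ but with all moments finite in the relevant range), the product inherits moment-indeterminacy; the same argument applies to $M_{(\tau,\lambda_1,\lambda_2)}^{-1}$ since its logarithm is $-\log M$, still carrying a nondegenerate Gaussian piece. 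I would make this rigorous by exhibiting a one-parameter family of distinct laws with the same moments, using the standard Stieltjes construction for the lognormal and convolving (multiplicatively) with the fixed law of $X_1X_2X_3$.

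For the uniqueness claim, the key observation is that $M_{(\tau,\lambda_1,\lambda_2)}$ itself factors as $L\,N_0$ where $N_0$ is an explicit distribution independent of $L$ — namely, by Theorem \ref{general} and the proof of Theorem \ref{BSM}, $N_0 = (\text{const})\,X_1X_2X_3$ with $L$ the very same lognormal as in \eqref{NYdef}. So both $\widetilde M_{(\tau,\lambda_1,\lambda_2)}=L\,N$ and $M_{(\tau,\lambda_1,\lambda_2)}=L\,N_0$ share a common lognormal factor with the \emph{same} variance $4\log 2/\tau$. Passing to logarithms, $\log\widetilde M = \log L + \log N$ and $\log M = \log L + \log N_0$ with $\log L\sim\mathcal N(0,4\log 2/\tau)$ in both. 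Taking Mellin transforms (equivalently, bilateral Laplace transforms of the logs) on the strip where they are defined, the equality of negative moments will be upgraded to equality of the full Mellin transforms on a left half-plane, and then the common Gaussian factor can be cancelled: ${\bf E}[N^{-q}]={\bf E}[N_0^{-q}]$ for $\Re(q)$ in an interval, because the lognormal Mellin factor $e^{2q^2\log 2/\tau}$ is nonvanishing and identical on both sides.

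The step I expect to be the main obstacle is the passage from \textbf{equality of integer negative moments} \eqref{Selbergmomentsnegative} to \textbf{equality of the negative-moment Mellin transforms as analytic functions} — i.e., justifying that matching countably many integer moments forces the laws to coincide, given the independently assumed lognormal structure. The resolution is that once we factor out the lognormal, the residual factor $N$ (respectively $N_0$) is moment-\emph{determinate}: having stripped the Gaussian component, what remains is built from $\beta_{2,2}^{-1}$ factors whose logarithms are compound-Poisson/bounded-Lévy-measure type (cf. Theorem \ref{main} and Corollary \ref{Structure}), so their moments do not grow too fast and Carleman's condition holds for $N_0$; hence $N$, having the same moments as the determinate $N_0$, must equal $N_0$ in law, whence $\widetilde M_{(\tau,\lambda_1,\lambda_2)}=L\,N=L\,N_0=M_{(\tau,\lambda_1,\lambda_2)}$. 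I would carry out the steps in this order: (i) establish the factorization $M=L\,N_0$ explicitly from Theorems \ref{general} and \ref{BSM}; (ii) prove $N_0$ is moment-determinate via a Carleman estimate on the growth of ${\bf E}[N_0^{-l}]$ from \eqref{Selbergmomentsnegative} after removing the Gaussian factor; (iii) deduce the indeterminacy of $M$ and $M^{-1}$ from the lognormal factor; (iv) conclude uniqueness by cancelling the common lognormal Mellin factor and invoking determinacy of $N_0$.
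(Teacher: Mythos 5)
Your overall strategy coincides with the paper's: factor $M_{(\tau,\lambda_1,\lambda_2)}$ as an independent product containing the lognormal $L$, cancel the lognormal at the level of the integer negative moments (moments of independent products factor, and the lognormal's moments are finite and nonzero), and conclude from moment determinacy of the residual factor. The genuine problem is your identification of that residual factor. By \eqref{Decomposition}, $M_{(\tau,\lambda_1,\lambda_2)}$ is a constant times $L\,X_1\,X_2\,X_3\,Y(\tau)$, not a constant times $L\,X_1X_2X_3$: the Fr\'echet factor $Y(\tau)$, with ${\bf E}[Y(\tau)^q]=\Gamma(1-q/\tau)$ as in \eqref{YMellin}, is exactly what produces the $\Gamma\bigl(1+k/\tau\bigr)$-type growth in \eqref{Selbergmomentsnegative}. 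If $N_0$ were $c\,X_1X_2X_3$, the moments obtained from \eqref{Selbergmomentsnegative} after dividing out the lognormal would not be the moments of $N_0^{-1}$, and your chain $\widetilde M= L\,N = L\,N_0 = M$ would fail at the last equality. Moreover, $Y^{-1}$ is the only factor whose determinacy is not immediate: the $\beta_{2,2}$ factors are supported in $(0,1]$ and hence determinate by compact support, whereas $Y^{-1}$ needs Carleman's criterion (${\bf E}[Y^{-l}]=\Gamma(1+l/\tau)$ grows like $(l!)^{1/\tau}$, so the criterion holds); this is precisely how the paper argues. Your proposed Carleman estimate applied directly to the moment sequence from \eqref{Selbergmomentsnegative} divided by the Gaussian moments would in fact capture this growth, so the plan is repairable, but as written the justification (``what remains is built from $\beta_{2,2}^{-1}$ factors'') rests on the wrong decomposition.

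Two smaller points. For $M=N=2$, $\log\beta_{2,2}$ is absolutely continuous, not compound Poisson (Corollary \ref{Structure} gives compound Poisson only for $M<N$); the harmlessness of these factors comes from compact support of $\beta_{2,2}$, not from a bounded L\'evy measure. In the indeterminacy part, to conclude that multiplicatively convolving a Heyde-type family of lognormal perturbations with the fixed independent factor yields \emph{distinct} laws, you need injectivity of that multiplicative convolution, which follows from the non-vanishing of the characteristic function of the infinitely divisible random variable $\log\bigl(X_1X_2X_3Y\bigr)$; the paper is equally terse here, attributing indeterminacy of $M^{-1}_{(\tau,\lambda_1,\lambda_2)}$ to its lognormal component and of $M_{(\tau,\lambda_1,\lambda_2)}$ to the fact that its positive moments of sufficiently high order are infinite.
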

\begin{proof}[Proof of Theorem \ref{BSM} and Corollary \ref{Uniqueness}]
Define
\begin{equation}\label{xidef}
x_i(\tau,\lambda_i) \triangleq 1+\tau(1+\lambda_i),\,i=1,2,\,a_1=1, \,a_2=\tau,
\end{equation}
and let $L(\tau)$ and $X_i(\tau,\lambda)$ be as in the statement of Theorem \ref{general}
corresponding to $a(\tau)=(1,\tau)$ and $x(\tau,\lambda)=\bigl(x_1(\tau,\lambda_1), x_2(\tau,\lambda_2)\bigr).$ In addition, define
\begin{equation}\label{Ydef}
Y(\tau) \triangleq \tau\,y^{-1-\tau}\exp\bigl(-y^{-\tau}\bigr)\,dy,\; y>0,
\end{equation}
\emph{i.e.} $Y(\tau)$ is a power of the exponential random variable (Fr\'echet distribution).
By the functional equation of $\Gamma_2$ in \eqref{feq}
and the definition of $Y(\tau)$ in \eqref{Ydef}, we have
\begin{align}
{\bf E}[Y(\tau)^q] & =\Gamma(1-q/\tau), \label{YMellin} \\
\Gamma_{2}(\tau-q\,|\,\tau) & =
\frac{\tau^{(\tau-q)/\tau-1/2}}{\sqrt{2\pi}}{\bf E}[Y(\tau)^q]\,\Gamma_2\bigl(1+\tau-q\,|\,\tau\bigr). \label{Ytransform}
\end{align}
Then, given Theorem \ref{general}, the difference between \eqref{eqgeneral} and \eqref{M} is in the third gamma ratio. Define
\begin{equation}\label{Decomposition}
M_{(\tau, \lambda_1, \lambda_2)} \triangleq 2\pi\,
2^{-\bigl[3(1+\tau)+2\tau(\lambda_1+\lambda_2)\bigr]/\tau}\,
\frac{
L(\tau)\,X_1(\tau,\lambda)\,X_2(\tau,\lambda)\,X_3(\tau,\lambda)\,Y(\tau)}{\Gamma\bigl(1-1/\tau\bigr)}.
\end{equation}
It is now elementary to see that the Mellin transform of $M_{(\tau, \lambda_1, \lambda_2)}$ equals the expression in \eqref{M}. The appearance of the $Y$ distribution in \eqref{Decomposition} is to account for this difference in the third gamma ratio.
The proof of \eqref{Selbergmomentsverified} and \eqref{Selbergmomentsnegative} is now immediate from \eqref{feq}, which implies the identity
\begin{equation}
\frac{\Gamma_2(x-l\,|\,\tau)}{\Gamma_2(x\,|\,\tau)} =
\prod\limits_{i=0}^{l-1}
\Gamma_1\bigl(x-(i+1)\,|\,\tau\bigr),\;l\in\mathbb{N},
\end{equation}
where $\Gamma_1(x\,|\,\tau)$ is defined in \eqref{gamma1}. The
remaining computation, which determines the overall constant in \eqref{Decomposition}, is straightforward. 
The proof of Corollary \ref{Uniqueness} follows from \eqref{Decomposition} due to
the determinacy of the Stieltjes moment problems for $\beta_{2,2}$ (compactly supported) and $Y^{-1}$ (Carleman's criterion)
and its indeterminacy for $L, \,L^{-1}$ (lognormal) and $\beta^{-1}_{2,2}, \,Y$ (infinite moments), confer \cite{Char}, Sections 2.2 and 2.3.
\end{proof}

\begin{remark} The Mellin transform of $M_{(\tau, \lambda_1, \lambda_2)}$
has two additional properties that we established in \cite{Me}. We record them here for completeness as they are important for Theorem \ref{Deriv} below. 
It satisfies the infinite product representation
\begin{align}
{\bf E}\bigl[M^q_{(\tau, \lambda_1, \lambda_2)}\bigr] & = 
\frac{\tau^{q}\Gamma\bigl(1-q/\tau\bigr)\Gamma\bigl(2-2q+\tau(1+\lambda_1+\lambda_2)\bigr)}{
\Gamma^{q}\bigl(1-1/\tau\bigr)\Gamma\bigl(2-q+\tau(1+\lambda_1+\lambda_2)\bigr)}\star \nonumber \\ & \star
\prod\limits_{m=1}^\infty \left(m\tau\right)^{2q}
\frac{\Gamma\bigl(1-q+m\tau\bigr)}{\Gamma\bigl(1+m\tau\bigr)}
\frac{\Gamma\bigl(1-q+\tau\lambda_1+m\tau\bigr)}{\Gamma\bigl(1+\tau\lambda_1+m\tau\bigr)}\star \nonumber \\
& \star
\frac{\Gamma\bigl(1-q+\tau\lambda_2+m\tau\bigr)}{\Gamma\bigl(1+\tau\lambda_2+m\tau\bigr)} 
\frac{\Gamma\bigl(2-q+\tau(\lambda_1+\lambda_2)+m\tau\bigr)}{\Gamma\bigl(2-2q+\tau(\lambda_1+\lambda_2)+m\tau\bigr)}
\label{Minfinprod}
\end{align}
and the functional equations
\begin{gather}
{\bf E}\bigl[M^q_{(\tau, \lambda_1, \lambda_2)}\bigr] =
{\bf E}\bigl[M^{q-\tau}_{(\tau, \lambda_1, \lambda_2)}\bigr]\tau
(2\pi)^{\tau-1}\Gamma^{-\tau}\Bigl(1-\frac{1}{\tau}\Bigr)
\Gamma(\tau-q) \star \nonumber \\ \star
\frac{\Gamma\bigl((1+\lambda_1)\tau-(q-1)\bigr)\Gamma\bigl((1+\lambda_2)\tau-(q-1)\bigr)}{\Gamma\bigl((2+\lambda_1+\lambda_2)\tau-(2q-2)\bigr)}
\frac{\Gamma\bigl((2+\lambda_1+\lambda_2)\tau-(q-2)\bigr)}{\Gamma\bigl((3+\lambda_1+\lambda_2)\tau-(2q-2)\bigr)}, \\
{\bf E}\bigl[M^q_{(\tau, \lambda_1, \lambda_2)}\bigr] =
{\bf E}\bigl[M^{q-1}_{(\tau, \lambda_1, \lambda_2)}\bigr] \,
\frac{\Gamma(1-q/\tau)\Gamma\bigl(2+\lambda_1+\lambda_2-(q-2)/\tau\bigr)}{\Gamma(1-1/\tau)}
\star \nonumber \\ \star
\frac{\Gamma\bigl(1+\lambda_1-(q-1)/\tau\bigr)\Gamma\bigl(1+\lambda_2-(q-1)/\tau\bigr)}
{\Gamma\bigl(2+\lambda_1+\lambda_2-(2q-2)/\tau\bigr)\Gamma\bigl(2+\lambda_1+\lambda_2-(2q-3)/\tau\bigr)}.
\end{gather}
\end{remark}

The last result that we will consider in this section has to do with the scaling invariance
in the context of the analytic extension of the Selberg integral in Theorem \ref{BSM}. Specifically, we are interested in the behavior of the decomposition in \eqref{Decomposition} under the involution $\tau\rightarrow 1/\tau.$ Let $a$ and $x$ be as in \eqref{xidef}.
\begin{theorem}\label{scalinvar} 
The distributions $L(\tau),$ $X_i(\tau,\lambda),$ and $M_{(a, x)}$ are involution invariant under $\tau\rightarrow 1/\tau$ and
$\lambda\rightarrow \tau\lambda.$
\begin{align}
L^{1/\tau}\bigl(\frac{1}{\tau}\bigr) &\overset{{\rm in \,law}}{=} L(\tau), \label{Linvar} \\
X^{1/\tau}_i\bigl(\frac{1}{\tau}, \tau\lambda\bigr) &\overset{{\rm in \,law}}{=}
X_i(\tau, \lambda), \;i=1,2,3, \label{Xinvar} \\
M^{1/\tau}_{\bigl(a(1/\tau), \,x(1/\tau,\tau\lambda)\bigr)} &\overset{{\rm in \,law}}{=} M_{\bigl(a(\tau), \,x(\tau,\lambda)\bigr)}. \label{Minvar}
\end{align}
\end{theorem}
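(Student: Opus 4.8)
The plan is to reduce all three identities to the scaling invariance of Barnes beta distributions (Theorem \ref{barnesbetascaling}), applied with $\kappa=1/\tau$, together with the factorization of $M_{(a,x)}$ in Theorem \ref{general}. The starting observation is that under $\tau\to 1/\tau$ the period vector transforms as $a(1/\tau)=(1,1/\tau)$, so that $\tau\,a(1/\tau)=(\tau,1)$ coincides with $a(\tau)=(1,\tau)$ as an unordered pair, and $\eta_{2,2}$ (hence $\beta_{2,2}$) is symmetric in $(a_1,a_2)$. Thus $a(\tau)$ and $a(1/\tau)$ are related precisely by the scaling $\kappa=\tau$ of Theorem \ref{barnesbetascaling}.

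First I would dispose of \eqref{Linvar}: by \eqref{Ldefin}, $\log L(\tau)$ is centered normal with variance $4\log 2/\tau$, so $\log L^{1/\tau}(1/\tau)=\tfrac1\tau\log L(1/\tau)$ is centered normal with variance $\tfrac1{\tau^2}\cdot 4\tau\log 2=4\log 2/\tau$, which is the variance of $\log L(\tau)$. Next, for \eqref{Xinvar}, I would read off from \eqref{X1}--\eqref{X3} and \eqref{xidef} the parameter vectors $b^{(i)}(\tau,\lambda)$ of the $\beta_{2,2}^{-1}$ factors $X_i(\tau,\lambda)$ and verify the elementary identities
\[
a(\tau)=\tau\,a(1/\tau),\qquad b^{(i)}(\tau,\lambda)=\tau\,b^{(i)}(1/\tau,\tau\lambda),\quad i=1,2,3
\]
(for instance, for $i=1$ one has $b_0=x_1=1+\tau(1+\lambda_1)$ and $b_1=b_2=\tfrac12\tau(\lambda_2-\lambda_1)$, and dividing by $\tau$ reproduces the $i=1$ parameters evaluated at $(1/\tau,\tau\lambda)$; the cases $i=2,3$ are analogous). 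Given these, Theorem \ref{barnesbetascaling} with $\kappa=1/\tau$ yields
\[
\beta_{2,2}^{1/\tau}\bigl(a(1/\tau),b^{(i)}(1/\tau,\tau\lambda)\bigr)\overset{{\rm in \,law}}{=}\beta_{2,2}\bigl(a(\tau),b^{(i)}(\tau,\lambda)\bigr),
\]
and passing to reciprocals gives \eqref{Xinvar}.

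For \eqref{Minvar} I would use Theorem \ref{general}: $M_{(a(\tau),x(\tau,\lambda))}\overset{{\rm in \,law}}{=}C(\tau,\lambda)\,L(\tau)X_1X_2X_3$ with $C(\tau,\lambda)=2^{-(2(x_1+x_2)-(a_1+a_2))/(a_1a_2)}$ and $L,X_1,X_2,X_3$ independent. Raising the analogous factorization at $(1/\tau,\tau\lambda)$ to the power $1/\tau$, using independence and the already-proved \eqref{Linvar}, \eqref{Xinvar}, it remains only to check $C(1/\tau,\tau\lambda)^{1/\tau}=C(\tau,\lambda)$; a direct substitution ($x_i(\tau,\lambda_i)=1+\tau(1+\lambda_i)$, $a_1a_2=\tau$) shows both sides equal $2^{-(3/\tau+3+2(\lambda_1+\lambda_2))}$. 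Alternatively, and perhaps more transparently, \eqref{Minvar} can be proved directly from the Mellin transform \eqref{eqgeneral}: since $\tau\bigl(x_i(1/\tau,\tau\lambda_i)-q/\tau\bigr)=x_i(\tau,\lambda_i)-q$, $\tau\bigl(a_1(1/\tau)+a_2(1/\tau)-q/\tau\bigr)=a_1(\tau)+a_2(\tau)-q$, and $\tau\bigl(x_1(1/\tau,\tau\lambda)+x_2(1/\tau,\tau\lambda)-2q/\tau\bigr)=x_1(\tau,\lambda)+x_2(\tau,\lambda)-2q$, applying Theorem \ref{scaling} with $M=2$, $\kappa=\tau$ to each of the eight $\Gamma_2$ factors in ${\bf E}\bigl[M^{q/\tau}_{(a(1/\tau),x(1/\tau,\tau\lambda))}\bigr]$ converts it into ${\bf E}\bigl[M^{q}_{(a(\tau),x(\tau,\lambda))}\bigr]$ times $\tau$ raised to an alternating sum of $B_{2,2}(\,\cdot\,|a(1/\tau))/2$; using the explicit quadratic \eqref{B22} one checks this sum vanishes, since the numerator-minus-denominator difference of the squared arguments is $-2q(a_1+a_2)$ and that of the linear arguments is $-2q$, and the two contributions cancel.

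The main obstacle I expect is purely the computational bookkeeping: getting the combined substitution $\tau\mapsto 1/\tau$, $\lambda\mapsto\tau\lambda$ together with the power $1/\tau$ to line up exactly with the $\kappa$-scaling in the parameters of each $\beta_{2,2}$ (and of each $\Gamma_2$), and confirming the cancellation of the $\tau$-powers in the direct route, respectively the matching of the dyadic constant $C(1/\tau,\tau\lambda)^{1/\tau}=C(\tau,\lambda)$ in the factorization route. There is no conceptual difficulty beyond Theorems \ref{scaling}, \ref{barnesbetascaling}, and \ref{general}, but these are the spots where a sign or a stray factor of $\tau$ or $2$ would be easy to introduce.
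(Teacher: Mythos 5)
Your proposal is correct and follows essentially the same route as the paper: \eqref{Linvar} by the lognormal variance computation, \eqref{Xinvar} via the identity $x_i(\tau,\lambda_i)/\tau=x_i(1/\tau,\tau\lambda_i)$ combined with Theorem \ref{barnesbetascaling} (using symmetry in the $a$'s and $b$'s), and \eqref{Minvar} from the factorization of Theorem \ref{general} together with \eqref{Linvar}--\eqref{Xinvar}, with the direct Mellin-transform check via Theorem \ref{scaling} and \eqref{B22} being exactly the paper's parenthetical alternative. Your explicit verification of the dyadic constant $C(1/\tau,\tau\lambda)^{1/\tau}=C(\tau,\lambda)$ is a detail the paper leaves implicit, and it checks out.
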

\begin{proof}
We have by \eqref{Ldefin}
\begin{align}
{\bf E}\Bigl[L^{q/\tau} \bigl(\frac{1}{\tau}\bigr)\Bigr] & = {\bf E}\Bigl[e^{(q/\tau)\mathcal{N}(0,\,4\tau\log 2)}\Bigr], \nonumber \\
& = e^{4q^2\log 2/2\tau} \equiv {\bf E}\bigl[L^{q} (\tau)\bigr].
\end{align}
To prove \eqref{Xinvar}, observe the identity
\begin{equation}\label{xinvar}
x_i(\tau, \lambda_i)/\tau = x_i(1/\tau, \tau\lambda_i), \;i=1,2.
\end{equation}
Hence, by the definition of $X_i(\tau,\lambda)$ and
Theorem \ref{barnesbetascaling}, we have the identity
\begin{align}
X^{1/\tau}(1/\tau, \tau\lambda) & \overset{{\rm in \,law}}{=} \beta_{2,2}^{-1/\tau}\bigl(a/\tau, b/\tau\bigr), \nonumber \\
& \overset{{\rm in \,law}}{=} \beta_{2,2}^{-1}\bigl(a, b\bigr),
\end{align}
where $a=(1,\tau)$ and $b=(b_0, b_1, b_2)$ is given in terms of
$a$ and $x_i, \,i=1, 2$ in \eqref{X1}--\eqref{X3}.
The proof of \eqref{Minvar} follows from \eqref{Linvar} and \eqref{Xinvar}
(or can be seen directly from \eqref{eqgeneral} by Theorem \ref{scaling} and \eqref{B22}).
\end{proof}

Define the reduced distribution, confer \eqref{Decomposition}, that is defined for all $\tau>0.$
\begin{equation}
M^{\star}_{(\tau, \lambda_1, \lambda_2)} \triangleq
\frac{\Gamma(1-1/\tau)}{2\pi}\,M_{(\tau, \lambda_1, \lambda_2)}.
\end{equation}
\begin{corollary}\label{Mtransforminvol}
$M^{\star}_{(\tau,\lambda_1,\lambda_2)}$ is not involution invariant and satisfies under $\tau\rightarrow 1/\tau,$ $\lambda\rightarrow \tau\lambda$
\begin{equation}\label{Mtransform}
{\bf E}\bigl[(M^{\star})^{q/\tau}_{(1/\tau, \tau\lambda_1, \tau\lambda_2)}\bigr] =
\frac{\Gamma(1-q)}{\Gamma(1-q/\tau)}\,{\bf E}\bigl[(M^{\star})^q_{(\tau, \lambda_1, \lambda_2)}\bigr].
\end{equation}
\end{corollary}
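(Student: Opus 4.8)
The plan is to exploit the factorization \eqref{Decomposition} together with the involution invariance already recorded in Theorem \ref{scalinvar}. First I would note that, by the definition of the reduced distribution and \eqref{Decomposition},
\[
M^{\star}_{(\tau, \lambda_1, \lambda_2)} \overset{{\rm in \,law}}{=} 2^{-[3(1+\tau)+2\tau(\lambda_1+\lambda_2)]/\tau}\, L(\tau)\,X_1(\tau,\lambda)\,X_2(\tau,\lambda)\,X_3(\tau,\lambda)\,Y(\tau),
\]
and that, for the parameters \eqref{xidef}, the explicit power of $2$ here coincides with the prefactor $2^{-(2(x_1+x_2)-(a_1+a_2))/a_1a_2}$ of \eqref{generaldecomp} (using $a_1a_2=\tau$ and $x_1+x_2=2+\tau(2+\lambda_1+\lambda_2)$). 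Hence $M^{\star}_{(\tau,\lambda_1,\lambda_2)}\overset{{\rm in \,law}}{=}M_{(a(\tau),x(\tau,\lambda))}\,Y(\tau)$ with the Fr\'echet factor $Y(\tau)$ independent of $M_{(a,x)}$, so that ${\bf E}\bigl[(M^{\star})^q_{(\tau,\lambda_1,\lambda_2)}\bigr] = {\bf E}\bigl[M^q_{(a(\tau),x(\tau,\lambda))}\bigr]\,\Gamma(1-q/\tau)$ by \eqref{YMellin}.

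Second, I would apply the same identity after the involution $\tau\mapsto 1/\tau$, $\lambda\mapsto\tau\lambda$, and with exponent $q/\tau$ in place of $q$. The Selberg factor ${\bf E}\bigl[M^{q/\tau}_{(a(1/\tau),x(1/\tau,\tau\lambda))}\bigr]$ is then replaced by ${\bf E}\bigl[M^q_{(a(\tau),x(\tau,\lambda))}\bigr]$ using \eqref{Minvar} from Theorem \ref{scalinvar}, while the Fr\'echet factor becomes ${\bf E}\bigl[Y(1/\tau)^{q/\tau}\bigr] = \Gamma(1-q)$, again by \eqref{YMellin} but now with parameter $1/\tau$. Comparing with the expression from the previous paragraph yields
\[
{\bf E}\bigl[(M^{\star})^{q/\tau}_{(1/\tau, \tau\lambda_1, \tau\lambda_2)}\bigr] = \Gamma(1-q)\,{\bf E}\bigl[M^q_{(a(\tau),x(\tau,\lambda))}\bigr] = \frac{\Gamma(1-q)}{\Gamma(1-q/\tau)}\,{\bf E}\bigl[(M^{\star})^q_{(\tau,\lambda_1,\lambda_2)}\bigr],
\]
which is \eqref{Mtransform}. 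Since the ratio $\Gamma(1-q)/\Gamma(1-q/\tau)$ is not identically $1$ for $\tau\neq 1$, this also shows $M^{\star}$ is not involution invariant.

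The content is essentially all borrowed: the involution invariance of $L$, the $X_i$, and $M_{(a,x)}$ is precisely Theorem \ref{scalinvar}, and the only genuinely new ingredient is the transformation law of the single Fr\'echet factor $Y$, whose Mellin transform $\Gamma(1-q/\tau)$ from \eqref{YMellin} manifestly fails to be invariant under $(\tau,q)\mapsto(1/\tau,q/\tau)$. The one place that requires a small amount of care — the only real obstacle — is the bookkeeping check that the explicit power of $2$ in the definition of $M^{\star}_{(\tau,\lambda_1,\lambda_2)}$ is exactly the power of $2$ appearing in \eqref{generaldecomp} under the substitution \eqref{xidef}, so that $M^{\star}$ is $M_{(a,x)}$ times $Y$ on the nose and the $2$-prefactors cancel identically when the ratio is formed.
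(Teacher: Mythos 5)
Your proof is correct and follows essentially the same route as the paper's: the paper likewise isolates the Fr\'echet factor $Y$, notes via \eqref{YMellin} that ${\bf E}\bigl[Y^{q/\tau}(1/\tau)\bigr]=\frac{\Gamma(1-q)}{\Gamma(1-q/\tau)}\,{\bf E}\bigl[Y^q(\tau)\bigr]$, and concludes from the involution invariance \eqref{Minvar}. The only difference is that you spell out explicitly the bookkeeping identifying $M^{\star}_{(\tau,\lambda_1,\lambda_2)}$ with $M_{(a(\tau),x(\tau,\lambda))}\,Y(\tau)$ (the cancellation of the powers of $2$), which the paper leaves implicit via \eqref{Decomposition}.
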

\begin{proof}
$M^{\star}_{(\tau,\lambda_1,\lambda_2)}$ is not involution invariant because  $Y$ is not involution invariant, confer \eqref{YMellin}. Instead,
we have by \eqref{YMellin} the identity
\begin{equation}
{\bf E}\bigl[Y^{q/\tau}(1/\tau)\bigr] = \frac{\Gamma(1-q)}{\Gamma(1-q/\tau)}
\,{\bf E}\bigl[Y^q(\tau)\bigr].  
\end{equation}
The proof now follows from \eqref{Minvar}.
\end{proof}
\begin{remark}
The scaling behavior of the Mellin transform 
in Theorem \ref{BSM} was first noted in \cite{FLDR}
using a different technique and led to an interesting conjecture
on a freezing transition. Our probabilistic approach to this problem is new.
\end{remark}
\begin{remark}
It is interesting to point out that the $L$ and $X_i,$ $i=1,2,3$ distributions
on the one hand and $Y$ on the other in the structure of $M_{(\tau,\lambda_1,\lambda_2)}$ are intrinsically different. This can be seen on three levels. First, the proof of Theorem \ref{BSM} indicates that $LX_1X_2X_3$
appears as one block from Theorem \ref{general}, while $Y$ is only needed to match the moments given by Selberg's formula. Second, Theorem \ref{scalinvar} and Corollary \ref{Mtransforminvol} show that $L$ and $X_i,$ are involution invariant, whereas $Y$ is not. Finally, the law of the total mass of the limit
lognormal measure on the circle was conjectured in \cite{FyoBou} to be precisely
the same as $Y,$ while our conjecture for the law of the total mass of this measure on the unit interval is $LX_1X_2X_3$ times $Y$ so that it appears
that $Y$ comes from the circle and $LX_1X_2X_3$ is a superstructure that is needed to transform the law of the total mass from the circle to the unit interval.
\end{remark}

\section{Law of the Derivative Martingale, Barnes $G$ function, and Riemann Xi}
In this section we will consider a limit of the probability distribution
$M_{(\tau, \lambda_1, \lambda_2)}$ in Theorem \ref{BSM} as $\tau\downarrow 1.$
We conjectured in \cite{Me} that $M_{(\tau, \lambda_1, \lambda_2)}$ is
the law of the (generalized) total mass of the limit lognormal stochastic measure on the unit interval. Under the same conjecture, the limit that we will consider here is the law of the derivative martingale and is of a particular interest in the theory of critical limit lognormal chaos, confer \cite{barraletal} and \cite{dupluntieratal}. We will first show what our general results give in this case and then introduce the connection with the xi function. Throughout this section we let $\lambda_1=\lambda_2=0$ for simplicity.

We begin by briefly reminding the reader about the Barnes $G(z)$ function, confer \cite{Genesis} and \cite{SriCho} for review. It is related to the double gamma function by
\begin{equation}
G(z) = (2\pi)^{(z-1)/2}\,\frac{\Gamma^{-1}_2(z\,|\,1)}{\Gamma^{-1}_2(1\,|\,1)}
\end{equation}
and satisfies
\begin{align}
G(1) & =1, \\
G(z+1) & = \Gamma(z)\,G(z).
\end{align}
It is known that $G(z)$ is entire with zeroes at $z=-n, \,n\in\mathbb{N},$ of
multiplicity $n+1.$

Define the distribution
$M_c$ by taking the weak limit of $M_{(\tau, \lambda_1, \lambda_2)}.$
\begin{equation}
M_c \triangleq \lim\limits_{\tau\downarrow 1} \frac{M_{(\tau, \lambda_1=0, \lambda_2=0)}}{1-1/\tau}.
\end{equation}
\begin{theorem}\label{Deriv} The Mellin transform of $M_c$ is
\begin{equation}\label{Mderiv}
{\bf E}\bigl[M^q_c\bigr]  =
\frac{G(4-2q)}{G(1-q)\, G^2(2-q)\,G(4-q)}
\end{equation}
for $\Re(q)<1.$ It has zeroes at $q=n+5/2,$ $n\in\mathbb{N},$ of multiplicity $2n+2$ and
poles: at $q=1$ of order $1,$ at $q=2$ of order $3,$ and at $q=n,$ $n=3,4,5\cdots$ of order $2n-2.$ The Mellin transform satisfies the functional equation
\begin{equation}
{\bf E}\bigl[M^q_c\bigr] =
\frac{\Gamma(1-q)\,\Gamma^2(2-q)\,\Gamma(4-q)}{\Gamma(4-2q)\,\Gamma(5-2q)}
\,{\bf E}\bigl[M^{q-1}_c\bigr] 
\end{equation}
and an infinite product representation
\begin{equation}
{\bf E}\bigl[M^q_c\bigr] =
\frac{\Gamma\bigl(1-q\bigr)\Gamma\bigl(3-2q\bigr)}{
\Gamma\bigl(3-q\bigr)}
\prod\limits_{m=1}^\infty m^{2q}
\frac{\Gamma^3\bigl(1-q+m\bigr)}{\Gamma^3\bigl(1+m\bigr)}
\frac{\Gamma\bigl(2-q+m\bigr)}{\Gamma\bigl(2-2q+m\bigr)}.
\end{equation}
The negative moments of $M_c$ satisfy for $l\in\mathbb{N}$
\begin{equation}
{\bf E}\bigl[M^{-l}_c\bigr] =
\prod_{k=0}^{l-1}
\frac{(3+l+k)!}{(k+1)!^2 \,k!}.
\end{equation}
$M_c$ has the factorization
\begin{equation}\label{McDecomposition}
M_c = \frac{\pi}{32}\,L\,\,X_2\,X_3\,Y,
\end{equation}
where
\begin{align}
L & = \exp\bigl(\mathcal{N}(0, 4\log 2)\bigr) \;(\text{Lognormal}), \\
X_2 & = \beta^{-1}_{2,2}\bigl(a=(1,1), \,b_0=2, \,b_1=b_2=1/2\bigr), \\
X_3 & = \frac{2}{y^3}\,dy, \;y>1 \;(\text{Pareto}),\\
Y & = \frac{1}{y^2}\,e^{-1/y}\,dy,\;y>0 \;(\text{Fr\'echet}).
\end{align}
In particular, $\log M_c$ is infinitely divisible and absolutely continuous.
\end{theorem}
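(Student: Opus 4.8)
The plan is to obtain every assertion by letting $\tau\downarrow1$ in the corresponding statement of Theorem \ref{BSM}, and of the Remark following Corollary \ref{Uniqueness}, specialized to $\lambda_1=\lambda_2=0$. Passing from the random variable $M_{(\tau,0,0)}$ to $M_{(\tau,0,0)}/(1-1/\tau)$ divides its $q$-th moment by $(1-1/\tau)^q$ and its $l$-th negative moment by $(1-1/\tau)^{-l}$; the arithmetic heart of the argument is the elementary identity $(1-1/\tau)\,\Gamma(1-1/\tau)=\Gamma(2-1/\tau)\to\Gamma(1)=1$, which shows that every occurrence of the divergent factor $\Gamma(1-1/\tau)$ in \eqref{M}, \eqref{Minfinprod} and \eqref{Selbergmomentsnegative} is cancelled exactly by the renormalization.

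First I would compute the Mellin transform. With $\lambda_1=\lambda_2=0$ one has $x_i=1+\tau$ in \eqref{M}, and dividing by $(1-1/\tau)^q$ turns the explicit prefactor into $\bigl(\tau^{1/\tau}(2\pi)/\Gamma(2-1/\tau)\bigr)^q\to(2\pi)^q$, while the four ratios of $\Gamma_2(\cdot\,|\,\tau)$ converge, locally uniformly on $\Re(q)<1$ (there all arguments stay off the poles $0,-1,-2,\dots$ of $\Gamma_2(\cdot\,|\,1)$, and the Ruijsenaars representation of Section 2 makes $\Gamma_2(w\,|\,\tau)$ jointly continuous in $(w,\tau)$), to $\Gamma_2^2(2-q\,|\,1)\,\Gamma_2(1-q\,|\,1)\,\Gamma_2(4-q\,|\,1)/\bigl(\Gamma_2^2(2\,|\,1)\,\Gamma_2(1\,|\,1)\,\Gamma_2(4-2q\,|\,1)\bigr)$. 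Converting via $G(z)=(2\pi)^{(z-1)/2}\Gamma_2^{-1}(z\,|\,1)/\Gamma_2^{-1}(1\,|\,1)$ together with $G(1)=G(2)=1$ yields exactly \eqref{Mderiv}, as a locally uniform limit of holomorphic functions on $\Re(q)<1$. Using next $\Gamma_2(1-q\,|\,1)/\Gamma_2(2-q\,|\,1)=\Gamma_1(1-q\,|\,1)=\Gamma(1-q)/\sqrt{2\pi}$ (from \eqref{feq}, \eqref{gamma1}) and $G(2-q)=\Gamma(1-q)G(1-q)$, a short computation shows that \eqref{Mderiv} equals $(2\pi)^q\,\Gamma(1-q)\,{\bf E}\bigl[M_{((1,1),(2,2))}^q\bigr]$, with $M_{((1,1),(2,2))}$ the probability law furnished by Theorem \ref{general} for $a=(1,1),\ x=(2,2)$. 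Hence the limiting Mellin transform is the moment generating function in $q$ of $\log\bigl(2\pi\,Y\,M_{((1,1),(2,2))}\bigr)$, where $Y$ is the Fr\'echet law with ${\bf E}[Y^q]=\Gamma(1-q)$, so by the continuity theorem for moment generating functions $M_{(\tau,0,0)}/(1-1/\tau)$ converges weakly; thus $M_c$ exists, has Mellin transform \eqref{Mderiv}, and satisfies $M_c\overset{{\rm in \,law}}{=}2\pi\,Y\,M_{((1,1),(2,2))}$.

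The factorization \eqref{McDecomposition} now follows from \eqref{generaldecomp} applied with $a=(1,1),\ x=(2,2)$: there $2(x_1+x_2)-(a_1+a_2)=6$ and $a_1a_2=1$, so $M_{((1,1),(2,2))}\overset{{\rm in \,law}}{=}2^{-6}L\,X_1X_2X_3$, where $X_1=\beta_{2,2}^{-1}(a,\,b_0=2,\,b_1=b_2=0)$ has Mellin transform $\equiv1$ by \eqref{beta22} and so is the unit mass, $X_2=\beta_{2,2}^{-1}\bigl((1,1),2,\tfrac12,\tfrac12\bigr)$, and $X_3=\beta_{2,2}^{-1}\bigl((1,1),2,1,1\bigr)$, which by Theorem \ref{Reduction} (with $b_1=1=a_1$) equals $\beta_{1,1}^{-1}(1,2,1)$ in law, whose Mellin transform $\Gamma(q+2)\Gamma(3)/\bigl(\Gamma(q+3)\Gamma(2)\bigr)=2/(q+2)$, computed from Definition \ref{bdef} and \eqref{gamma1}, identifies $X_3$ as the Pareto law $2y^{-3}dy$ on $(1,\infty)$. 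Since $Y$ has density $y^{-2}e^{-1/y}$ on $(0,\infty)$ and $L=\exp(\mathcal{N}(0,4\log2))$, these combine into $M_c\overset{{\rm in \,law}}{=}\tfrac{\pi}{32}\,L\,X_2\,X_3\,Y$, which is \eqref{McDecomposition}. (Alternatively, one may pass to the limit $\tau\downarrow1$ directly in \eqref{Decomposition}, using $\Gamma(2-1/\tau)\to1$ and $X_1(\tau,0)\equiv1$, the weak convergence of the $\beta_{2,2}$ and $Y$ factors following from continuity of their Mellin transforms, respectively densities.)

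The remaining assertions are routine. The zeros and poles are read off \eqref{Mderiv} from the facts that $G$ is entire with a zero of multiplicity $n+1$ at $z=-n$ and $G(1)=G(2)=1$: the only $q$ at which an argument of one of $G(4-2q),G(1-q),G(2-q),G(4-q)$ is a non-positive integer are $q=1$, the integers $q\ge2$, and the half-integers $q=n+\tfrac52$ with $n\ge0$, and comparing the order of vanishing of the numerator with that of the denominator at each of these gives poles of orders $1$ at $q=1$, $3$ at $q=2$, and $2n-2$ at $q=n\ge3$, and zeros of multiplicity $2n+2$ at $q=n+\tfrac52$. The functional equation follows from \eqref{Mderiv} via $G(z+1)=\Gamma(z)G(z)$ (equivalently from the second of the two functional equations recorded in the Remark after Corollary \ref{Uniqueness}, with $\lambda_i=0$ and the renormalization); the infinite product follows by passing to $\tau\downarrow1$ in \eqref{Minfinprod}, the convergence being locally uniform in $q$ so that the limit may be taken term by term; and the negative moments follow by passing to $\tau\downarrow1$ in \eqref{Selbergmomentsnegative} with $\lambda_i=0$, where the $l$ factors $\Gamma(1-1/\tau)$ combine with $(1-1/\tau)^l$ into $\Gamma(2-1/\tau)^l\to1$, leaving $\prod_{k=0}^{l-1}(3+l+k)!/\bigl((k+1)!^2\,k!\bigr)$. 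Finally, $\log M_c$ is infinitely divisible because infinite divisibility is preserved under weak limits and $\log M_{(\tau,0,0)}$ is infinitely divisible by Theorem \ref{BSM}, and $\log M_c$ is absolutely continuous because, by \eqref{McDecomposition}, it has the independent Gaussian (hence absolutely continuous) summand $\log L$. The main obstacle, and the only genuinely delicate point, is the analytic bookkeeping behind the limits: the joint continuity of $\Gamma_2(\cdot\,|\,\tau)$ up to $\tau=1$ and the uniform control needed to interchange $\tau\downarrow1$ with the infinite product in \eqref{Minfinprod}; both follow from the integral representations of Section 2.
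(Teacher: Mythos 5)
Your proposal is correct and follows exactly the route the paper intends: the paper's proof of Theorem \ref{Deriv} is simply the remark that it ``follows directly from our results in Sections 3--5,'' and your write-up is a faithful, detailed implementation of that — taking $\tau\downarrow 1$ with $\lambda_1=\lambda_2=0$ in \eqref{M}, \eqref{Minfinprod}, \eqref{Selbergmomentsnegative} and \eqref{Decomposition}, absorbing the renormalization via $(1-1/\tau)\Gamma(1-1/\tau)=\Gamma(2-1/\tau)\to 1$, converting $\Gamma_2(\cdot\,|\,1)$ to the Barnes $G$ function, and identifying the degenerate $X_1$ and the Pareto factor $X_3$ through Theorem \ref{Reduction}. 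All the checks (zero/pole orders, functional equation via $G(z+1)=\Gamma(z)G(z)$, constants in \eqref{McDecomposition}) are accurate, so no gap to report.
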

The proof follows directly from our results in Sections 3 -- 5.

It is clear from Theorem \ref{Deriv} that the most nontrivial component of
$M_c$ is $X_2.$ Consider more generally a family of $\beta_{2,2}$ distributions
that is parameterized by $\delta>0.$
\begin{equation}
\beta_{2,2}(\delta) \triangleq \beta_{2,2}\bigl(a=(1,1), \,b_0=\delta, \,b_1=b_2=1/2\bigr).
\end{equation}
Its Mellin transform is given in \eqref{beta22} and satisfies in terms of $G(z)$
\begin{align}
{\bf E}\bigl[\beta_{2,2}^q(\delta)\bigr] & = \frac{G(\delta)}{G(q+\delta)}
\frac{G^2(q+\delta+1/2)}{G^2(\delta+1/2)}
\frac{G(\delta+1)}{G(q+\delta+1)}, \\
& = \prod\limits_{k=0}^\infty \Bigl[
\frac{\delta+k}{q+\delta+k} \frac{(q+\delta+1/2+k)^2}
{(\delta+1/2+k)^2} \frac{\delta+1+k}
{q+\delta+1+k}\Bigr]^{k+1}
\end{align}
by Corollary \ref{BarnesFactorSpecial}.
Remarkably, this distribution is intrinsically related to the Riemann xi function. Using \eqref{LKH}, the Mellin transform of $\beta_{2,2}(\delta)$ is given by
\begin{align}
{\bf E}\bigl[\beta^q_{2, 2}(\delta)\bigr] & =
\exp\Bigl(\int\limits_0^\infty (e^{-tq}-1) e^{-\delta
t} \frac{(1-e^{-t/2})^2}{(1-e^{-t})^2} \frac{dt}{t}\Bigr), \nonumber \\
& = \exp\Bigl(\frac{1}{4} \int\limits_0^\infty (e^{-tq}-1) e^{-(\delta-1/2)
t} \frac{1}{\cosh^2(t/4)}\frac{dt}{t}\Bigr).
\end{align}
Now, the reader who is familiar with \cite{BiaPitYor} and \cite{PitYor}
will recognize the significance of the hyperbolic cosine in the L\'evy density
of  $-\log\beta_{2,2}(\delta).$ Recall that the Laplace transform of the $C_2$ distribution satisfies
\begin{equation}\label{C2Laplace}
{\bf E}\bigl[e^{-qC_2}\bigr] = \Bigl[\frac{1}{\cosh\sqrt{2q}}\Bigr]^2,\; q>0,
\end{equation}
confer \cite{BiaPitYor} for details. Hence, we can write
\begin{equation}\label{beta22mellinC2}
{\bf E}\bigl[\beta^q_{2, 2}(\delta)\bigr] = \exp\Bigl(\frac{1}{4} \int\limits_0^\infty (e^{-tq}-1) e^{-(\delta-1/2)
t} \,{\bf E}\bigl[e^{-t^2\,C_2/32}\bigr] \frac{dt}{t}\Bigr).
\end{equation}
On the other hand, the Mellin transform of the $C_2$ distribution (and of its kin $S_2,$ confer Section 7) is essentially the Riemann xi function. 
\begin{equation}\label{C2Mellin}
{\bf E}\bigl[C_2^q\bigr] = \frac{(2^{2q+2}-1)}{q+1}\bigl(\frac{2}{\pi}\bigr)^{q+1}\xi(2q+2),\;q\in\mathbb{C},
\end{equation}
where xi is defined in terms of the Riemann zeta function\footnote{Contrary to the commonly accepted usage, we use
$q$ as opposed to $s$ as the generic complex variable, to be consistent with the rest of the paper.} by
\begin{equation}\label{riemannxidef}
\xi(q) \triangleq \frac{1}{2}q(q-1)\pi^{-q/2}\Gamma(q/2)\zeta(q).
\end{equation}
Hence, the intrinsic relationship between $\beta_{2,2}(\delta)$
and the xi function. We will make this relationship more explicit for $0<\delta<1$ by computing the cumulants of  $-\log\beta_{2,2}(\delta),$
which we denote by $\kappa_n(\delta)$ so that
\begin{equation}
\kappa_n(\delta) \triangleq (-1)^n\frac{d^n}{dq^n}\vert_{q=0} \log {\bf E}\bigl[\beta_{2,2}^q(\delta)\bigr].
\end{equation}
We now have the following result.
\begin{theorem}\label{Gxi}
For $0<\delta<1,$ and $n=1,2,3\cdots,$ the $n$th cumulant is
\begin{equation}\label{cumrestricted}
\kappa_n(\delta)  = \frac{1}{8} \sum\limits_{m=0}^\infty \frac{\bigl(-(\delta-1/2)\bigr)^m}{m!} \,32^{(m+n)/2} \,\Gamma\bigl(\frac{m+n}{2}\bigr) \,{\bf E}\bigl[C_2^{-(m+n)/2}\bigr].
\end{equation}
In particular, for $\delta=1/2,$ we obtain
\begin{equation}\label{cumxi}
\kappa_n(1/2) = \frac{32^{n/2}}{8} \Gamma\bigl(\frac{n}{2}\bigr)
\frac{(2^{2-n}-1)}{1-n/2}\bigl(\frac{2}{\pi}\bigr)^{1-n/2}\,\xi(n-1).
\end{equation}
\end{theorem}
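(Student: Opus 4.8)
The plan is to start from the L\'evy--Khinchine representation \eqref{beta22mellinC2}, which writes $\log{\bf E}\bigl[\beta_{2,2}^q(\delta)\bigr]=\frac14\int_0^\infty(e^{-tq}-1)\,e^{-(\delta-1/2)t}\,{\bf E}\bigl[e^{-t^2C_2/32}\bigr]\,\frac{dt}{t}$, and to read off the cumulants by differentiating under the integral sign. First I would check that for $\delta>0$ one may differentiate $n$ times in $q$ inside the integral on a disc $|q|<\epsilon$ with $\epsilon<\delta$: a dominating function is $t^{k-1}e^{(\epsilon+|\delta-1/2|)t}\,{\bf E}\bigl[e^{-t^2C_2/32}\bigr]$, and since \eqref{C2Laplace} with $q=t^2/32$ (so $\sqrt{2q}=t/4$) gives ${\bf E}\bigl[e^{-t^2C_2/32}\bigr]=1/\cosh^2(t/4)\sim 4e^{-t/2}$ as $t\to\infty$, the integrand decays like $t^{k-1}e^{(\epsilon-\delta)t}$ and is also integrable at $t=0$ for $k\geq1$. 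Using $\partial_q^n(e^{-tq}-1)\big|_{q=0}=(-t)^n$ and multiplying by $(-1)^n$ then yields the closed form
\[ \kappa_n(\delta)=\frac14\int_0^\infty t^{n-1}e^{-(\delta-1/2)t}\,{\bf E}\bigl[e^{-t^2C_2/32}\bigr]\,dt. \]

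Next I would expand $e^{-(\delta-1/2)t}=\sum_{m\geq0}\frac{(-(\delta-1/2))^m}{m!}t^m$ and interchange summation with integration. This is the heart of the argument and the precise point at which the hypothesis $0<\delta<1$ is used: by Tonelli the interchange is legitimate provided $\int_0^\infty e^{|\delta-1/2|t}t^{n-1}\,{\bf E}\bigl[e^{-t^2C_2/32}\bigr]\,dt<\infty$, and because the integrand behaves like $t^{n-1}e^{(|\delta-1/2|-1/2)t}$ at infinity this holds if and only if $|\delta-1/2|<1/2$, i.e. iff $0<\delta<1$; in that range Fubini simultaneously justifies the interchange and shows that the resulting series converges to $\kappa_n(\delta)$. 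It then remains to evaluate $\int_0^\infty t^{n+m-1}\,{\bf E}\bigl[e^{-t^2C_2/32}\bigr]\,dt$: interchanging expectation with the integral (again Tonelli, the integrand being non-negative) and substituting $u=t^2C_2/32$ turns the inner integral into a Gamma integral, giving $\frac12\,32^{(n+m)/2}\,\Gamma\!\bigl(\frac{m+n}{2}\bigr)\,{\bf E}\bigl[C_2^{-(m+n)/2}\bigr]$, where ${\bf E}\bigl[C_2^{-s}\bigr]<\infty$ for every $s>0$ since the right side of \eqref{C2Mellin} is entire in $q$. Collecting constants ($\frac14\cdot\frac12=\frac18$) produces \eqref{cumrestricted}.

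Finally, for $\delta=1/2$ every term with $m\geq1$ vanishes (with the convention $0^0=1$), so the series collapses to $\kappa_n(1/2)=\frac18\,32^{n/2}\,\Gamma(n/2)\,{\bf E}\bigl[C_2^{-n/2}\bigr]$; equivalently this follows at once from the displayed integral formula with $\delta-1/2=0$. Substituting $q=-n/2$ into \eqref{C2Mellin} and using the functional equation $\xi(q)=\xi(1-q)$ to rewrite $\xi(2-n)=\xi(n-1)$ gives ${\bf E}\bigl[C_2^{-n/2}\bigr]=\frac{2^{2-n}-1}{1-n/2}\bigl(\frac{2}{\pi}\bigr)^{1-n/2}\xi(n-1)$ (interpreted by continuity at $n=2$), and hence \eqref{cumxi}.

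The main obstacle is the term-by-term integration of the second paragraph: one must pin down the exact exponential order of $1/\cosh^2(t/4)$ at infinity and observe that $0<\delta<1$ is precisely the condition under which the dominating integral is finite — outside that range the series in \eqref{cumrestricted} diverges even though the cumulants $\kappa_n(\delta)$ themselves remain well defined for all $\delta>0$.
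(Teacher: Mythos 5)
Your proof is correct and follows essentially the same route as the paper: differentiate the L\'evy--Khinchine representation \eqref{beta22mellinC2} at $q=0$, expand $e^{-(\delta-1/2)t}$ in a power series, substitute $y=t^{2}C_{2}/32$ to produce $\Gamma\bigl(\tfrac{m+n}{2}\bigr)\,{\bf E}\bigl[C_{2}^{-(m+n)/2}\bigr]$, and then invoke \eqref{C2Mellin} together with $\xi(2-n)=\xi(n-1)$ for the case $\delta=1/2$. The only difference is that you spell out the dominated-convergence and Tonelli justifications (and the exponential decay of $1/\cosh^{2}(t/4)$ that pins down the range $0<\delta<1$), which the paper states without detail.
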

\begin{proof}
This follows from \eqref{beta22mellinC2} and \eqref{C2Mellin} by changing variables $y=t^2\,C_2/32,$ expanding the exponential in power series,
and using $\xi(2-n)=\xi(n-1)$ to obtain \eqref{cumxi}. The resulting series is only convergent for $0<\delta<1.$
\end{proof}
\begin{remark}
The relationship between $\beta_{2,2}(\delta)$ and $C_2$ is not fully understood. Like $-\log\beta_{2,2}(\delta),$ $C_2$ is an infinitely divisible and absolutely continuous probability distribution on $(0, \,\infty).$ It is an open question how to relate them beyond \eqref{beta22mellinC2}. In particular,
what, if anything, can be inferred about $C_2$ and its Mellin transform from the structure of $\beta_{2,2}(\delta)$ or $M_c?$
\end{remark}

\section{An Approximation of the Riemann Xi Function}

We remind the reader of the definition of the $S_2$ distribution and its relationship to the Riemann xi function
following \cite{BiaPitYor}, which is the primary reference as well
as motivation for the results of this section.
The Riemann xi has a remarkable probabilistic representation,
originally due to Riemann, which in the modern
language can be written in the form
\begin{equation}\label{S2xi}
\Bigl(\frac{2}{\pi}\Bigr)^{q}2\xi(2q) = {\bf E}\bigl[S_2^q\bigr],\;
q\in\mathbb{C},
\end{equation}
where $S_2$ is a probability distribution on $(0, \,\infty)$ that is
defined by
\begin{equation}
S_2 \triangleq \frac{2}{\pi^2} \sum\limits_{n=1}^\infty
\frac{\Gamma_{2,n}}{n^2},
\end{equation}
and $\{\Gamma_{2,n}\}$ denotes an iid family of gamma distributions on
$(0, \,\infty)$ with the density $x e^{-x}.$ Recalling the Jacobi
theta function $\theta(t)$ (strictly speaking, a special case of
Jacobi's $\theta_3$)
\begin{equation}
\theta(t)\triangleq 1+2\sum\limits_{n=1}^\infty e^{-\pi tn^2},\;t>0,
\end{equation}
the Laplace transform of $S_2$ can be written as\footnote{We mention in passing
that $S_2$ as defined by \eqref{S2Laplace1} appears also 
in a model of Anderson localization in the context of statistics of eigenvectors of random banded matrices, confer \cite{YM}.}
\begin{align}
{\bf E}\bigl[e^{-qS_2}\bigr] & = \Bigl[\frac{\sqrt{2q}}{\sinh\sqrt{2q}}\Bigr]^2, \label{S2Laplace1}\\
& = \exp\Bigl(\int\limits_{0}^\infty
(e^{-qt}-1)\bigl(\theta(\frac{\pi
t}{2})-1\bigr)\frac{dt}{t}\Bigr), \label{S2Laplace2}\; q>0.
\end{align}
In particular, $S_2$ is infinitely divisible and absolutely
continuous.\footnote{We note that $\theta(t)\propto t^{-1/2}$ as
$t\rightarrow 0$ and $\theta(t)-1\propto e^{-\pi t}$ as $t\rightarrow
+\infty$ so that $\bigl(\theta(\pi t/2)-1\bigr)/t$ is a valid L\'evy
density.}

This construction inspired us to try to find a probabilistic
representation of $\xi$ in terms of the Mellin transform of the logarithm of Barnes beta distributions. The relevance of the latter is that the Jacobi
triple product for $\theta(t)$ naturally leads to a limit of Barnes
beta distributions.
\begin{equation}
\theta(t) = \prod\limits_{n=1}^\infty (1-e^{-2n\pi
t})\prod\limits_{n=1}^\infty (1+e^{-(2n-1)\pi
t})\prod\limits_{n=1}^\infty (1+e^{-(2n-1)\pi t}),
\end{equation}
confer \cite{WhitWat}, Chapter 21. Define a sequence of approximations
\begin{align}
\theta_M(t)  \triangleq \prod\limits_{n=1}^M (1-e^{-2n\pi
t})\Bigl[\prod\limits_{n=1}^M (1+e^{-(2n-1)\pi
t})\Bigr]^2 = & \prod\limits_{n=1}^M (1-e^{-2n\pi
t}) \star \nonumber \\ & \star  \Bigl[\prod\limits_{n=1}^M \frac{(1-e^{-(4n-2)\pi
t})}{(1-e^{-(2n-1)\pi
t})}\Bigr]^2.
\end{align}
The corresponding Barnes beta sequence $\beta_M(\delta)$ is
a family of distributions on $(0, 1]$ that is parameterized
by $M\in\mathbb{N}$ and $\delta>0.$ Let $i, j=1\cdots M$ and $k=1,2.$ By Theorem \ref{main},
\begin{gather}
\beta_M(\delta) \triangleq \beta_{2M,3M}\Bigl(a^{(k)}_i=(2i-1)\frac{\pi^2}{2},\,b_0=\delta,\,b^{(k)}_j=(4j-2)\frac{\pi^2}{2},\,
b^{(3)}_j=2j\frac{\pi^2}{2}\Bigr), \label{betaMdelta} \\
{\bf E}\bigl[e^{q\log\beta_M(\delta)}\bigr] = \exp\Bigl(\int\limits_{0}^\infty
(e^{-qt}-1)e^{-\delta t}\theta_M(\frac{\pi
t}{2})\frac{dt}{t}\Bigr),\; \Re(q)>-\delta.
\end{gather}
Define the limit distribution $T(\delta)$ on $(0,\,\infty)$ by\footnote{\({\bf P}[\beta_M(\delta)=1]\rightarrow 0\) as \(M\rightarrow \infty\) by \eqref{Pof1}.}
\begin{equation}
\beta_M(\delta) \overset{{\rm in \,law}}{\rightarrow} \exp\bigl(-T(\delta)\bigr),\,M\rightarrow \infty.
\end{equation}
By construction, as $\theta_M(t)\rightarrow\theta(t)$ in the limit $M\rightarrow \infty,$ we have
\begin{equation}\label{TLaplace}
{\bf E}\bigl[e^{-qT(\delta)}\bigr]  =
\exp\Bigl(\int\limits_{0}^\infty (e^{-qt}-1)e^{-\delta t}\theta(\frac{\pi
t}{2})\frac{dt}{t}\Bigr),\; q>0, \,\delta>0,
\end{equation}
so that $T(\delta)$ is absolutely continuous and infinitely divisible.
In light of \eqref{S2Laplace2}, one naturally wants to know how $T(\delta)$ is related to
$S_2$ and how its Mellin transform is related to $\xi.$ These questions are answered in the following
theorem. Recall the definition of $\beta_{0,0},$ in particular, $\beta_{0, 0}$
has density $b_0\,x^{b_0-1}\,dx$ on \((0,1).\)
\begin{theorem}\label{Txi}
Let $\delta>0$ and $\beta_{00}(b_0=\delta)$ be independent of the other distributions.
\begin{equation}\label{Txieq1}
T(\delta) \overset{{\rm in \,law}}{=} \sum\limits_{n=1}^\infty
\frac{\Gamma_{2,n}}{\pi^2 n^2/2+\delta} + \bigl(-\log\beta_{00}(\delta)\bigr).
\end{equation}
Denote the density of $T(\delta)$ by $f_{T(\delta)}(x).$
\begin{equation}\label{Txieq2}
f_{T(\delta)}(x) = \delta e^{-\delta x}
\Bigl[\frac{\sinh\sqrt{2\delta}} {\sqrt{2\delta}}\Bigr]^2\,{\bf
P}(S_2<x),\;x>0.
\end{equation}
The Mellin transform of $T(\delta)$ is entire
and satisfies the functional equation for $\delta<\pi^2/2$
\begin{equation}\label{Txieq3}
{\bf E}[T(\delta)^q] -\frac{q}{\delta}{\bf E}[T(\delta)^{q-1}]  =
\Bigl[\frac{\sinh\sqrt{2\delta}} {\sqrt{2\delta}}\Bigr]^2\,
\Bigl(\frac{2}{\pi}\Bigr)^q \sum\limits_{n=0}^\infty
\frac{1}{n!}\Bigl(\frac{-2\delta}{\pi}\Bigr)^n\,2\xi(2q+2n).
\end{equation}
In particular, by taking the limit $\delta\rightarrow 0,$ we obtain
\begin{equation}
\lim\limits_{\delta\rightarrow 0} \Bigl[{\bf E}[T(\delta)^q] -\frac{q}{\delta}{\bf E}[T(\delta)^{q-1}]\Bigr] =
\Bigl(\frac{2}{\pi}\Bigr)^q
\,2\xi(2q).
\end{equation}

\end{theorem}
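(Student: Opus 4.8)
The plan is to treat the three assertions in turn, each reduced to the Laplace transform \eqref{TLaplace} and then to a density or Mellin computation built on it. For \eqref{Txieq1}, I would insert $\theta(\pi t/2)=1+2\sum_{n\geq1}e^{-\pi^2n^2t/2}$ into the exponent of \eqref{TLaplace}, so that $e^{-\delta t}\theta(\pi t/2)=e^{-\delta t}+2\sum_{n\geq1}e^{-(\pi^2n^2/2+\delta)t}$, and integrate term by term using the Frullani identity $\int_0^\infty(e^{-qt}-1)e^{-at}\,dt/t=\log\bigl(a/(a+q)\bigr)$ (the interchange of sum and integral being justified by monotone convergence within each sign class, and convergence by $\Re(q)>-\delta$). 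This gives
$$
{\bf E}\bigl[e^{-qT(\delta)}\bigr]=\frac{\delta}{\delta+q}\prod_{n=1}^\infty\Bigl(\frac{\pi^2n^2/2+\delta}{\pi^2n^2/2+\delta+q}\Bigr)^2 .
$$
The first factor is the Laplace transform of an exponential of rate $\delta$, i.e.\ of $-\log\beta_{0,0}(\delta)$; the $n$th factor is that of $\Gamma_{2,n}/(\pi^2n^2/2+\delta)$, since $\Gamma_{2,n}$ has Laplace transform $(1+q)^{-2}$. As $\sum_n 2/(\pi^2n^2/2+\delta)<\infty$, the series in \eqref{Txieq1} converges a.s., and \eqref{Txieq1} follows by independence and uniqueness of Laplace transforms.

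For \eqref{Txieq2} I would write $T(\delta)\overset{{\rm in \,law}}{=}E+W$, where $E$ is exponential of rate $\delta$ and $W\triangleq\sum_n\Gamma_{2,n}/(\pi^2n^2/2+\delta)$ is independent, by \eqref{Txieq1}. Using the product $\sinh z/z=\prod_n(1+z^2/(n^2\pi^2))$, the identity above rewrites as
$$
{\bf E}\bigl[e^{-qW}\bigr]=\Bigl[\frac{\sinh\sqrt{2\delta}}{\sqrt{2\delta}}\Bigr]^2\,{\bf E}\bigl[e^{-(q+\delta)S_2}\bigr]
$$
by \eqref{S2Laplace1}, so $W$ has density $C\,e^{-\delta x}f_{S_2}(x)$ with $C\triangleq\bigl[\sinh\sqrt{2\delta}/\sqrt{2\delta}\bigr]^2$ (automatically a probability density, as one checks at $q=0$). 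Convolving with the exponential law of $E$,
$$
f_{T(\delta)}(x)=\int_0^x\delta e^{-\delta(x-w)}\,C e^{-\delta w}f_{S_2}(w)\,dw=\delta C e^{-\delta x}\,{\bf P}(S_2<x),
$$
which is \eqref{Txieq2}. The point here is recognizing $W$ as the Esscher (exponential) transform of $S_2$.

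For \eqref{Txieq3} I would start from ${\bf E}[T(\delta)^q]=\delta C\int_0^\infty x^q e^{-\delta x}{\bf P}(S_2<x)\,dx$ (from \eqref{Txieq2}), use $\delta x^q e^{-\delta x}=q x^{q-1}e^{-\delta x}-(x^q e^{-\delta x})'$, and integrate the derivative term by parts; the boundary terms vanish because ${\bf P}(S_2<x)$ decays faster than every power of $x$ as $x\downarrow0$. This yields
$$
{\bf E}\bigl[T(\delta)^q\bigr]-\frac{q}{\delta}{\bf E}\bigl[T(\delta)^{q-1}\bigr]=C\,{\bf E}\bigl[S_2^q e^{-\delta S_2}\bigr],
$$
valid for all $q\in\mathbb{C}$ and $\delta>0$. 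Expanding $e^{-\delta S_2}$ in powers of $S_2$, interchanging sum and expectation, and substituting ${\bf E}[S_2^{q+n}]=(2/\pi)^{q+n}2\xi(2q+2n)$ from \eqref{S2xi} gives \eqref{Txieq3}; Stirling's formula applied to $\xi(2q+2n)=\tfrac12(2q+2n)(2q+2n-1)\pi^{-(q+n)}\Gamma(q+n)\zeta(2q+2n)$ shows the series converges precisely when $2\delta/\pi^2<1$, i.e.\ $\delta<\pi^2/2$. The limit $\delta\downarrow0$ is then immediate from the boxed identity: $C\to1$ and ${\bf E}[S_2^q e^{-\delta S_2}]\to{\bf E}[S_2^q]=(2/\pi)^q2\xi(2q)$ by dominated convergence.

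The main obstacle is not the algebra but the analytic bookkeeping: justifying that ${\bf E}[T(\delta)^q]$ is entire and that the integration by parts and the boundary vanishing are legitimate, both of which rest on the stretched-exponential decay $f_{S_2}(x)=O(e^{-c/x})$ near $0$ (a Tauberian consequence of the $e^{-2\sqrt{2q}}$ decay of \eqref{S2Laplace1}), together with the interchanges of series/expectation in the $\xi$-expansion and of limits in the $\delta\to0$ passage. Once the Esscher-transform relation between $W$ and $S_2$ is in place, the passage from $\theta$ to $\xi$ via \eqref{S2xi} is automatic.
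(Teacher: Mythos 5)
Your proposal is correct and follows essentially the same route as the paper: decompose $T(\delta)$ as an independent sum of an exponential of rate $\delta$ (i.e.\ $-\log\beta_{0,0}(\delta)$) and the tilted distribution $S_2(\delta)$, recognize the latter as an Esscher transform of $S_2$ via the $\sinh$ product, convolve to get \eqref{Txieq2}, then integrate by parts and expand $e^{-\delta x}$ against \eqref{S2xi} to obtain \eqref{Txieq3}. The paper merely packages the facts about $S_2(\delta)$ (its L\'evy density, Laplace transform, density, and Mellin series, with the same $\delta<\pi^2/2$ restriction) into a separate auxiliary lemma before the short proof of the theorem, which is what you carry out inline.
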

\begin{remark}\label{Splitting} By Theorem \ref{Reduction}, $\beta_M(\delta)$ splits into a product of $\beta_{0,M}\bigl(b_0, b_1^{(3)}\cdots b_M^{(3)}\bigr)$ so that
$T(\delta)$ is the $M\rightarrow\infty$ limit of a sum of 
$-\log\beta_{0,M}\bigl(b_0, b_1^{(3)}\cdots b_M^{(3)}\bigr)$ with different values of $b_0,$ confer \eqref{betaMdelta}. 
The details of this calculation and its implications are left to further research. 
\end{remark}
\begin{remark} The Jacobi triple product plays the central role in the relationship between Barnes beta distributions and $T(\delta).$ It is interesting to point out that the Jacobi triple product is also of
central importance in the proof of the key probabilistic property of
the two-variable zeta function in \cite{LagRains}. We are not aware
of any connection between the two results and leave it as an intriguing open question.
\end{remark}

The proof of Theorem \ref{Txi} requires an auxiliary result that we state and prove first.
We need to define a one-parameter family of infinitely divisible
distributions $S_2(\delta)$ on $(0,\,\infty)$ extending $S_2=S_2(0).$ Let
$f_{S_2}(x)$ denote the probability density of $S_2.$
\begin{definition} Given $\delta>0,$
\begin{equation}
S_2(\delta) \triangleq \sum\limits_{n=1}^\infty
\frac{\Gamma_{2,n}}{\pi^2 n^2/2+\delta}.
\end{equation}
\end{definition}
\begin{lemma}\label{S2aux}
$S_2(\delta)$ is infinitely divisible and absolutely continuous.
Denote its density by $f_{S_2(\delta)}(x).$ Then, its Laplace
transform, density, and Mellin transform satisfy
\begin{align}
{\bf E}\bigl[e^{-qS_2(\delta)}\bigr] & =
\Bigl[\frac{\sinh\sqrt{2\delta}}{\sqrt{2\delta}}\Bigr]^2\, {\bf E}\bigl[e^{-(q+\delta)S_2}\bigr], \label{Seq1}\\
& = \exp\Bigl(\int\limits_{0}^\infty
(e^{-qt}-1)e^{-\delta
t}\bigl(\theta(\frac{\pi t}{2})-1\bigr)\frac{dt}{t}\Bigr),\; q>0, \,\delta>0, \label{Seq2}\\
f_{S_2(\delta)}(x) & =
\Bigl[\frac{\sinh\sqrt{2\delta}}{\sqrt{2\delta}}\Bigr]^2\,e^{-\delta
x}f_{S_2}(x), \; x>0, \,\delta>0, \label{Seq3} \\
{\bf E}[S_2(\delta)^q] & = \Bigl[\frac{\sinh\sqrt{2\delta}}
{\sqrt{2\delta}}\Bigr]^2\, \Bigl(\frac{2}{\pi}\Bigr)^q
\sum\limits_{n=0}^\infty
\frac{1}{n!}\Bigl(\frac{-2\delta}{\pi}\Bigr)^n\,2\xi(2q+2n),\;q\in\mathbb{C},\,
\delta<\pi^2/2. \label{Seq4}
\end{align}
\end{lemma}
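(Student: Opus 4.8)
The plan is to establish the four claims about $S_2(\delta)$ in the order they are stated, deriving each from the previous ones together with the known properties of $S_2$ recalled in Section 7. The underlying principle is that $S_2(\delta)$ is obtained from $S_2$ by rescaling each of the independent summands $\Gamma_{2,n}/(\pi^2 n^2/2)$ by the factor $(\pi^2 n^2/2)/(\pi^2 n^2/2+\delta)$, equivalently by an exponential tilting at the level of L\'evy measures.

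First I would prove \eqref{Seq1} directly from the definition. Since $S_2(\delta)=\sum_{n\ge 1}\Gamma_{2,n}/(\pi^2 n^2/2+\delta)$ with the $\Gamma_{2,n}$ independent and $\Gamma_{2,n}$ having Laplace transform $(1+s)^{-2}$, I compute
\begin{equation}
{\bf E}\bigl[e^{-qS_2(\delta)}\bigr]=\prod_{n=1}^\infty\Bigl(1+\frac{q}{\pi^2 n^2/2+\delta}\Bigr)^{-2}
=\prod_{n=1}^\infty\Bigl(\frac{\pi^2 n^2/2+\delta}{\pi^2 n^2/2+\delta+q}\Bigr)^{2}.
\end{equation}
Using the classical product $\prod_{n\ge1}(1+x/n^2)=\sinh(\pi\sqrt{x})/(\pi\sqrt{x})$ with $x=2(\delta+q)/\pi^2$ in the numerator-shifted form and $x=2\delta/\pi^2$ for the normalizing factor, and recalling ${\bf E}[e^{-qS_2}]=(\sqrt{2q}/\sinh\sqrt{2q})^2$ from \eqref{S2Laplace1}, the product telescopes into $[\sinh\sqrt{2\delta}/\sqrt{2\delta}]^2\,{\bf E}[e^{-(q+\delta)S_2}]$, which is \eqref{Seq1}. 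Infinite divisibility is immediate because each factor $(1+q/(\pi^2n^2/2+\delta))^{-2}$ is the Laplace transform of a gamma law, so $S_2(\delta)$ is a convergent sum of independent infinitely divisible variables; absolute continuity follows from \eqref{Seq3}, proved next, since $S_2$ is absolutely continuous.

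Next, \eqref{Seq2} follows by substituting \eqref{S2Laplace2} into \eqref{Seq1}: writing ${\bf E}[e^{-(q+\delta)S_2}]=\exp\bigl(\int_0^\infty(e^{-(q+\delta)t}-1)(\theta(\pi t/2)-1)\,dt/t\bigr)$ and $[\sinh\sqrt{2\delta}/\sqrt{2\delta}]^2={\bf E}[e^{-(-\delta)S_2}]^{-1}\cdot$(care with sign) — more cleanly, $[\sinh\sqrt{2\delta}/\sqrt{2\delta}]^2=\exp\bigl(-\int_0^\infty(e^{-\delta t}-1)(\theta(\pi t/2)-1)\,dt/t\bigr)\cdot e^{\text{(linear term)}}$; I would instead just verify \eqref{Seq2} by matching L\'evy measures: the L\'evy density of $-\log$… no, of $S_2(\delta)$ is $e^{-\delta t}(\theta(\pi t/2)-1)/t$ because tilting the L\'evy measure of $S_2$ by $e^{-\delta t}$ multiplies each jump-rate and the product formula \eqref{Seq1} is exactly the exponential-tilting identity ${\bf E}[e^{-qX^{(\delta)}}]= {\bf E}[e^{-(q+\delta)X}]/{\bf E}[e^{-\delta X}]$ for a subordinator $X$ with the convention that the $\delta$-tilted law is still a probability measure; the prefactor $[\sinh\sqrt{2\delta}/\sqrt{2\delta}]^2$ is precisely ${\bf E}[e^{-\delta S_2}]^{-1}$ evaluated via \eqref{S2Laplace1} at $q=\delta$. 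This simultaneously gives \eqref{Seq3}: since ${\bf E}[e^{-qS_2(\delta)}]=[\sinh\sqrt{2\delta}/\sqrt{2\delta}]^2\int_0^\infty e^{-qx}e^{-\delta x}f_{S_2}(x)\,dx$, inverting the Laplace transform yields $f_{S_2(\delta)}(x)=[\sinh\sqrt{2\delta}/\sqrt{2\delta}]^2 e^{-\delta x}f_{S_2}(x)$, and absolute continuity of $S_2(\delta)$ follows. Finally \eqref{Seq4}: take the Mellin transform of \eqref{Seq3}, ${\bf E}[S_2(\delta)^q]=[\sinh\sqrt{2\delta}/\sqrt{2\delta}]^2\int_0^\infty x^q e^{-\delta x}f_{S_2}(x)\,dx$, expand $e^{-\delta x}=\sum_{n\ge0}(-\delta)^n x^n/n!$, interchange sum and integral, recognize $\int_0^\infty x^{q+n}f_{S_2}(x)\,dx={\bf E}[S_2^{q+n}]=(2/\pi)^{q+n}2\xi(2q+2n)$ by \eqref{S2xi}, and collect to get \eqref{Seq4}; the series converges for $\delta<\pi^2/2$ since $f_{S_2}$ decays like $e^{-\pi^2 x/2}$ (the rightmost jump in the L\'evy density $\theta(\pi t/2)-1\sim e^{-\pi^2 t/2}$ gives tail rate $\pi^2/2$).

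The main obstacle is the justification of the two interchanges of limit and integral: the term-by-term Laplace inversion giving \eqref{Seq3}, and the expansion of $e^{-\delta x}$ under the Mellin integral in the last step, including the precise radius of convergence $\delta<\pi^2/2$. For the latter I would pin down the asymptotics of $f_{S_2}$ as $x\to\infty$: from \eqref{S2Laplace2} the L\'evy density $\nu(t)=(\theta(\pi t/2)-1)/t$ behaves like $2e^{-\pi^2 t/2}/t$ as $t\to\infty$, so $S_2$ has an exponential right tail with rate exactly $\pi^2/2$, forcing $\int_0^\infty x^{q+n}e^{-\delta x}f_{S_2}(x)\,dx$ to grow like $(\pi^2/2-\delta)^{-(n+1)}\cdot n!$ up to polynomial factors — controlled precisely when $\delta<\pi^2/2$, and then dominated convergence licenses the interchange. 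Everything else is a bookkeeping exercise with the classical sine/sinh product and \eqref{S2Laplace1}–\eqref{S2xi}.
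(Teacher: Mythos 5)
Your proposal is correct, and it uses the same basic ingredients as the paper's proof (the $\sinh$ infinite product, the L\'evy--Khinchine structure of the weighted gamma sum, and the expansion of $e^{-\delta x}$ under the Mellin integral justified by the exponential tail of $f_{S_2}$ with rate $\pi^2/2$ and dominated convergence), but it assembles them in the reverse order. The paper first identifies the L\'evy density of $S_2(\delta)$ as $e^{-\delta t}\bigl(\theta(\pi t/2)-1\bigr)/t$ via the weighted-sum formula $\rho(t)=\sum_n c_n^{-1}\rho(t/c_n)$ of Biane--Pitman--Yor, which gives \eqref{Seq2} at once, and then evaluates the L\'evy--Khinchine integral with Frullani's formula to reach the product form and hence \eqref{Seq1}; you instead compute the Laplace transform directly as $\prod_n\bigl(1+q/(\pi^2n^2/2+\delta)\bigr)^{-2}$, telescope it with the $\sinh$ product to get \eqref{Seq1}, and then read off \eqref{Seq2} from the Esscher-tilting identity (tilting by $e^{-\delta x}$ multiplies the L\'evy density by $e^{-\delta t}$), with \eqref{Seq3} following by uniqueness of Laplace transforms exactly as in the paper. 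Your route buys a completely explicit, Frullani-free derivation of \eqref{Seq1}, while the paper's route makes \eqref{Seq2} (the form actually matched against \eqref{TLaplace} later) immediate; both are equally rigorous. Two small points of polish: the aborted first attempt at \eqref{Seq2} by substitution should simply be deleted in favor of the tilting argument you settle on, and in the convergence discussion the terms after expansion are $\int_0^\infty x^{q+n}f_{S_2}(x)\,dx$ (without the factor $e^{-\delta x}$); the paper handles this step by bounding the partial sums of $e^{-\delta x}$ by $e^{\delta x}$ and citing the tail asymptotics of $f_{S_2}$ from Table 1 of \cite{BiaPitYor}, which is the same estimate you derive from the decay of the L\'evy density.
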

\begin{proof}
The starting point is the formula given in \cite{BiaPitYor}, Section 3.2, for the L\'evy density
$\rho(t)$ of the weighted sum of positive, independent, infinitely divisible distributions of the
form $\sum_n c_n\, X_n,$
where $c_n>0$ and $X_n$ has L\'evy density $\rho(t)$ for all $n.$
\begin{equation}
\rho(t) = \sum_n \frac{1}{c_n} \rho(t/c_n).
\end{equation}
The L\'evy density of $\Gamma_{2,n}$ is $2e^{-t}/t$ so that the L\'evy density $\rho_{S_2(\delta)}(t)$ of $S_2(\delta)$ is
\begin{equation}
\rho_{S_2(\delta)}(t) = \frac{e^{-\delta t}}{t} \bigl(\theta(\pi t/2)-1\bigr).
\end{equation}
Then, the Laplace transform of $S_2(\delta)$ can be written as
\begin{align}
{\bf E}\bigl[e^{-qS_2(\delta)}\bigr] & =
\exp\Bigl(\int\limits_{0}^\infty
(e^{-qt}-1)\rho_{S_2(\delta)}(t)\,dt\Bigr), \label{Saux1}\\
& = \Bigl[\prod\limits_{n=1}^\infty \frac{\delta+\pi^2n^2/2}{q+\delta+\pi^2n^2/2}\Bigr]^2, \label{Saux2} \\
& = \Bigl[\frac{\sinh\sqrt{2\delta}}{\sqrt{2\delta}}\Bigr]^2\, {\bf E}\bigl[e^{-(q+\delta)S_2}\bigr], \label{Saux3}
\end{align}
where we used Frullani's formula for $\log(x)$  
to evaluate the integral in \eqref{Saux1} and the infinite product representation of $\sinh(x)$ and \eqref{S2Laplace1} to obtain \eqref{Saux3}. This proves \eqref{Seq1} and \eqref{Seq2}. The density of $S_2(\delta)$ follows from \eqref{Seq1} so that the Mellin transform 
is
\begin{equation}
{\bf E}[S_2(\delta)^q] = \Bigl[\frac{\sinh\sqrt{2\delta}}
{\sqrt{2\delta}}\Bigr]^2\, \int\limits_0^\infty x^q e^{-\delta x}f_{S_2}(x)\,dx.
\end{equation}
Expanding the exponential and making use of \eqref{S2xi}, we obtain \eqref{Seq4}, provided
that the integral can be computed term by term. The partial sums of $\exp(-\delta x)$ are
bounded by $\exp(\delta x).$ If $\delta<\pi^2/2,$ then $\exp(\delta x) f_{S_2}(x)$ is exponentially
small as $x\rightarrow \infty,$ confer Table 1 in \cite{BiaPitYor} so that the result follows by dominated
convergence. The series is absolutely convergent if $\delta<\pi^2/2$ as is clear from
\eqref{riemannxidef} since $\zeta(q)\rightarrow 1$ as $\Re(q)\rightarrow +\infty.$
\end{proof}
\begin{proof}[Proof of Theorem \ref{Txi}]
The proof of \eqref{Txieq1} follows from \eqref{TLaplace}, \eqref{Seq2} and
\begin{equation}
{\bf E}\Bigl[\exp\bigl(q\log\beta_{00}(\delta)\bigr)\Bigr] = \exp\Bigl(\int\limits_0^\infty(e^{-qt}-1)e^{-\delta t} \frac{dt}{t}\Bigr).
\end{equation}
The density of $T(\delta)$ in \eqref{Txieq2} is the convolution of the density of $S_2(\delta)$ in \eqref{Seq3} and the density $\delta\exp(-\delta x)$ of
$-\log\beta_{00}(\delta).$  Since the cumulative distribution function ${\bf P}(S_2<x)$ of $S_2$
is exponentially small as $x\rightarrow 0,$ confer Proposition 2.1 in \cite{BiaPitYor}, the Mellin transform of $T(\delta)$ is entire
by \eqref{Txieq2}. To prove \eqref{Txieq3}, we integrate by parts and use \eqref{Seq4}.
\begin{align}
{\bf E}[T(\delta)^q] & = -\Bigl[\frac{\sinh\sqrt{2\delta}} {\sqrt{2\delta}}\Bigr]^2
\int\limits_0^\infty x^q {\bf P}(S_2<x) \frac{d}{dx} e^{-\delta x} \, \,dx, \\
& = \Bigl[\frac{\sinh\sqrt{2\delta}} {\sqrt{2\delta}}\Bigr]^2
\int\limits_0^\infty  e^{-\delta x}\Bigl[x^q f_{S_2}(x)+qx^{q-1}{\bf P}(S_2<x)\Bigr] \,dx, \\
& = {\bf E}[S_2(\delta)^q] + \frac{q}{\delta}{\bf E}[T(\delta)^{q-1}].
\end{align}
\end{proof}
\begin{remark}
Why do Barnes beta distributions appear in the context of the Riemann zeta function? One possible answer is that they share one essential property in common with the zeta function -- they combine additive and multiplicative structures into a single object. This property follows from Definition \ref{bdef} and can be seen explicitly from Theorem \ref{BarnesFactorization}. 
\end{remark}
\section{Proofs of Theorems \ref{scaling}, \ref{multiplic}, and \ref{ShinFactor}.}
In this section we will give proofs of Theorems \ref{scaling}, \ref{multiplic}, and \ref{ShinFactor} using the Ruijsenaars representation of the log-gamma functions in Theorem \ref{R} and elementary properties of multiple Bernoulli polynomials.

\begin{proof}[Proof of Theorem \ref{scaling}]
Let $f(t)$ be defined by \eqref{fdef} and $\kappa>0.$ Then, it is easy to see from \eqref{fdef}
and \eqref{Bdefa} that we have the identities
\begin{gather}
f(t\,|\,\kappa\,a) = \kappa^{-M}\,f(\kappa\,t\,|\,a), \\
B_{M,m}(x\,|\,\kappa\,a) = \kappa^{m-M}\,B_{M,m}\bigl(\frac{x}{\kappa}\,|\,a\bigr).
\end{gather}
The result now follows by applying \eqref{key} to $L_M(\kappa\,w\,|\,\kappa\,a)$
changing variables $t'=\kappa\,t,$ and using Frullani's integral for logarithm.
\end{proof}

\begin{proof}[Proof of Theorem \ref{multiplic}]
Let $f(t)$ be defined by \eqref{fdef} and $k=1, 2, 3, \cdots.$ The key identity that we need is
\begin{equation}
\sum\limits_{p_1,\cdots,p_M=0}^{k-1}B_{M,m}\Bigl(w+\frac{\sum_{j=1}^M
p_j a_j}{k}\,|\,a\Bigr) = k^{M-m}\,B_{M,m}(kw\,|\,a).
\end{equation}
It is a simple corollary of \eqref{Bdefa}. By \eqref{key}, we then
have
\begin{gather}
\sum\limits_{p_1,\cdots,p_M=0}^{k-1}L_{M}\Bigl(w+\frac{\sum_{j=1}^M
p_j a_j}{k}\,|\,a\Bigr) = \int\limits_0^\infty \frac{dt}{t^{M+1}} \Bigl(
e^{-kwt}\,f(t) - \nonumber \\
- \sum\limits_{m=0}^{M-1} \frac{t^m}{m!}\,B_{M,m}(kw\,|\,a)
- \frac{t^M\,e^{-kt}}{M!}\, B_{M,M}(kw\,|\,a)\Bigr).
\end{gather}
By Frullani's integral for logarithm, we thus obtain
\begin{equation}
\sum\limits_{p_1,\cdots,p_M=0}^{k-1}L_{M}\Bigl(w+\frac{\sum_{j=1}^M
p_j a_j}{k}\,|\,a\Bigr) = L_M(kw\,|\,a) + \frac{1}{M!} B_{M,M}(kw\,|\,a)\log k.
\end{equation}
The result follows.
\end{proof}

The proof of Theorem \ref{ShinFactor} requires an auxiliary lemma of independent interest.
\begin{lemma}[Main Lemma]\label{mainlemma}
Let $f(t)$ be an arbitrary function of the Ruijsenaars class. Let $\Psi^{(f)}_{M+1}(x,y)$ be defined by \eqref{Psidef} with $B^{(f)}_{m}(x)$
given in \eqref{Bdefa}. Then,
\begin{equation}
\Psi^{(f)}_{M+1}(x,y) = \sum\limits_{m=0}^M \frac{B^{(f)}_{m}(x)}{m!}\frac{(-y)^{M-m}}{(M-m)!}\bigl(\sum\limits_{l=1}^{M-m}\frac{1}{l}-\log y\bigr) +\frac{1}{y(M+1)!}B^{(f)}_{M+1}(x). \label{Psiorig}
\end{equation}
Define the function
\begin{align}
\chi^{(f)}_{M+1}(x,y)\triangleq &\int\limits_0^\infty \frac{dt}{t^{M+1}}\Bigl[
\frac{f(t)}{e^{yt}-1}\,e^{-xt}-
\sum\limits_{m=0}^{M} \frac{t^m}{m!}B^{(f)}_{m}(x)\frac{1}{e^{yt}-1}-\nonumber \\
& - \frac{t^{M}}{y(M+1)!}B^{(f)}_{M+1}(x)e^{-yt}\Bigr], \label{chidef}
\end{align}
and let $\gamma$ denote Euler's constant. For any $\Re(x),\,\Re(y),\,\Re(w)>0 $ we have
\begin{gather}
\log\prod\limits_{k=1}^\infty \frac{\Gamma^{(f)}_M(w+ky)}{\Gamma^{(f)}_M(x+ky)}e^{\Psi^{(f)}_{M+1}(x,ky)-\Psi^{(f)}_{M+1}(w,ky)} = \chi^{(f)}_{M+1}(w,y)-\chi^{(f)}_{M+1}(x,y)+\nonumber \\ +\frac{\gamma}{y(M+1)!}\bigl(B^{(f)}_{M+1}(x)- B^{(f)}_{M+1}(w)\bigr).\label{shinproduct}
\end{gather}
\end{lemma}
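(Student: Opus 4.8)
\textbf{Proof proposal for Lemma \ref{mainlemma}.}

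The plan is to establish the lemma in three stages. First I would verify the closed form \eqref{Psiorig} for $\Psi^{(f)}_{M+1}(x,y)$ by directly evaluating the integral in the definition \eqref{Psidef}. The integrand there is, by construction, the difference between $\sum_{m=0}^M \tfrac{t^m}{m!}B^{(f)}_m(x)e^{-yt}$ and the first $M$ terms of its Taylor expansion at $t=0$ plus the $e^{-t}$-regularized $M$th term; this is precisely the structure of the Malmst\'en integrand in Theorem \ref{R}. Each summand $\tfrac{t^m}{m!}B^{(f)}_m(x)e^{-yt}$ contributes, after dividing by $t^{M+1}$ and integrating, a term of the form $\tfrac{B^{(f)}_m(x)}{m!}\int_0^\infty t^{m-M-1}e^{-yt}\,dt$ suitably regularized; for $m<M$ this is an elementary Gamma integral giving $\tfrac{B^{(f)}_m(x)}{m!}\tfrac{(-y)^{M-m}}{(M-m)!}\bigl(\sum_{l=1}^{M-m}\tfrac1l-\log y-\gamma\bigr)$ after subtracting the lower-order Taylor terms, the standard way one obtains the harmonic-number-plus-log correction. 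The $m=M$ term together with the $e^{-t}$ regularization produces a $-\log y$ contribution, and the remaining $\tfrac{1}{y(M+1)!}B^{(f)}_{M+1}(x)$ term is carried along unchanged. Collecting everything yields \eqref{Psiorig}; I would need to be careful that the $\gamma$'s cancel in exactly the right way, which is the one point of genuine bookkeeping here.

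Second, with \eqref{Psiorig} in hand I would expand the left-hand side of \eqref{shinproduct}. By the Malmst\'en formula \eqref{key}, $\log\Gamma^{(f)}_M(w+ky)=L^{(f)}_M(w+ky)$ is an integral of $e^{-(w+ky)t}f(t)$ minus its Bernoulli-polynomial tails, divided by $t^{M+1}$. Summing over $k\geq 1$ turns $\sum_{k\geq1}e^{-(w+ky)t}$ into $e^{-wt}/(e^{yt}-1)$, which is exactly the kernel appearing in the definition \eqref{chidef} of $\chi^{(f)}_{M+1}$. The subtracted tail terms in $L^{(f)}_M$, namely $\sum_{m=0}^{M-1}\tfrac{t^m}{m!}B^{(f)}_m(w+ky)$ and the $e^{-t}$-regularized $M$th term, are polynomials in $k$ of degree $M$ (since $B^{(f)}_m(w+ky)$ is a polynomial in $ky$ of degree $m\le M$); summing these over $k$ diverges term by term, and this is precisely where the factors $e^{\Psi^{(f)}_{M+1}(x,ky)-\Psi^{(f)}_{M+1}(w,ky)}$ enter — from \eqref{Psiorig} one sees $\Psi^{(f)}_{M+1}(\cdot,ky)$ is built from the same Bernoulli polynomials with coefficients $\tfrac{(-y)^{M-m}}{(M-m)!}(\sum_{l}\tfrac1l-\log(ky))$, designed so that the divergent pieces telescope against the divergent tail sums and leave exactly the finite integrals defining $\chi^{(f)}_{M+1}(w,y)-\chi^{(f)}_{M+1}(x,y)$. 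Isolating the leftover finite constant: the $m=M$ Bernoulli term in $\Psi$ has a $\log(ky)=\log k+\log y$ piece, and $\sum_{k=1}^K(\ldots)$ over the $\log k$ part combines with the partial sums of the divergent polynomial tail to produce, via $\sum_{k=1}^K \tfrac1k - \log K\to\gamma$, the residual $\tfrac{\gamma}{y(M+1)!}\bigl(B^{(f)}_{M+1}(x)-B^{(f)}_{M+1}(w)\bigr)$ on the right-hand side of \eqref{shinproduct}.

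The third stage is simply to assemble the identification: match the convergent part of $\sum_{k\ge1}\bigl[L^{(f)}_M(w+ky)-L^{(f)}_M(x+ky)+\Psi^{(f)}_{M+1}(x,ky)-\Psi^{(f)}_{M+1}(w,ky)\bigr]$ term by term with $\chi^{(f)}_{M+1}(w,y)-\chi^{(f)}_{M+1}(x,y)+\tfrac{\gamma}{y(M+1)!}(B^{(f)}_{M+1}(x)-B^{(f)}_{M+1}(w))$, checking absolute convergence of the interchanged sum-and-integral using that $f(t)$ is of Ruijsenaars class (polynomial growth at $\infty$, analytic at $0$) so the integrand of $\chi^{(f)}_{M+1}$ is $O(t)$ near $0$ and exponentially small at $\infty$ once the $1/(e^{yt}-1)$ factor and the Bernoulli subtractions are accounted for. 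The main obstacle I anticipate is the delicate regrouping in stage two: one is manipulating three separately divergent sums (the Bernoulli tails of $L^{(f)}_M$, the $\log k$ and the harmonic-number parts of $\Psi^{(f)}_{M+1}$) and must organize them as $\lim_{K\to\infty}$ of partial sums so that the finite remainder emerges cleanly; getting the precise coefficient $\tfrac{\gamma}{y(M+1)!}$ and confirming no stray $\log y$ survives is where the proof really has to be done carefully rather than sketched.
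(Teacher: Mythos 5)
Your plan is sound and follows the same overall strategy as the paper: establish the equivalence of \eqref{Psidef} and \eqref{Psiorig} by evaluating the regularized elementary integrals, then evaluate the partial product up to $K$ with the Malmst\'en formula \eqref{key}, sum the geometric series to produce the kernel $1/(e^{yt}-1)$ appearing in \eqref{chidef}, let the $\Psi$-terms absorb the divergences, and extract $\gamma$ from $\sum_{k\le K}1/k-\log K$ in the limit. The one genuine difference is in the bookkeeping of stage two: the paper substitutes $\Psi^{(f)}_{M+1}$ in its \emph{integral} form \eqref{Psidef} into the finite product, so all cancellations happen under a single convergent integral and the only divergent object left is the harmonic sum $\sum_{k=1}^K 1/k$, which is then handled by writing $\log K$ via Frullani's integral and passing to the limit by dominated convergence; you instead plan to substitute the closed form \eqref{Psiorig} and regroup several separately divergent sums (the $k^{j}\log k$ pieces from the $\log(ky)$ factors against the Bernoulli tails), which can be made to work but is exactly the delicate regrouping you flag as the main obstacle --- the paper's route sidesteps it entirely, at the modest price of having to check once that the integrand of \eqref{chidef} is bounded near $t=0$ and exponentially small at infinity (which you also note). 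A small correction to your stage one: the relevant regularized integral, namely the paper's identity $\int_0^\infty t^{-N-1}\bigl[e^{-yt}-\sum_{j=0}^{N-1}\tfrac{(-yt)^j}{j!}-\tfrac{(-yt)^N}{N!}e^{-t}\bigr]dt=\tfrac{(-y)^N}{N!}\bigl(\sum_{l=1}^{N}\tfrac1l-\log y\bigr)$, produces the harmonic-number-minus-$\log y$ correction directly with \emph{no} Euler constant, so after regrouping the subtraction terms with the binomial identity $B^{(f)}_m(x+y)=\sum_p\binom{m}{p}B^{(f)}_p(x)(-y)^{m-p}$ the feared $\gamma$-cancellation never arises; the $\gamma$ in \eqref{shinproduct} comes only from the $k$-sum of the $\tfrac{1}{ky(M+1)!}B^{(f)}_{M+1}$ terms.
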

\begin{proof}[Proof of Lemma \ref{mainlemma}]
The expression on the right-hand side of \eqref{Psiorig} is due to \cite{KataOhts}. Its equality to our definition of \(\Psi^{(f)}_{M+1}(x,y)\) in \eqref{Psidef} (which we only need here to
show that the integral in \eqref{Psidef} is convergent)
follows from the result of \cite{Ruij}
\begin{equation}\label{Ruijidentity}
\int\limits_0^\infty \frac{dt}{t^{N+1}} \Bigl[e^{-yt}-\sum\limits_{j=0}^{N-1} \frac{(-yt)^j}{j!} - \frac{(-yt)^N}{N!}e^{-t}\Bigr] = \frac{(-y)^{N}}{N!}\bigl(\sum\limits_{l=1}^{N}\frac{1}{l}-\log y\bigr).
\end{equation}
Indeed, applying \eqref{Ruijidentity} to the sum on the right-hand side of
\eqref{Psiorig}, we get
\begin{gather}
\int\limits_0^\infty \frac{dt}{t^{M+1}}\Bigl[\sum\limits_{m=0}^M \frac{t^m}{m!}B^{(f)}_{m}(x)e^{-yt} - \sum\limits_{m=0}^{M}\frac{t^m}{m!}B^{(f)}_{m}(x)\sum\limits_{j=0}^{M-m-1}
\frac{(-yt)^j}{j!} - \nonumber \\ - \sum\limits_{m=0}^{M}\frac{t^m}{m!}B^{(f)}_{m}(x)\frac{(-yt)^{M-m}}{(M-m)!}
e^{-t}\Bigr].
\end{gather}
Using the identity
\begin{equation}\label{Bbinom}
B^{(f)}_m(x+y) = \sum\limits_{p=0}^m \binom{m}{p}\,B^{(f)}_p(x) (-y)^{m-p},
\end{equation}
we can simplify
\begin{equation}
\sum\limits_{m=0}^{M}\frac{t^m}{m!}B^{(f)}_{m}(x)\frac{(-yt)^{M-m}}{(M-m)!} = \frac{t^M}{M!} B^{(f)}_{M}(x+y).
\end{equation}
Thus, to prove that \eqref{Psidef} and \eqref{Psiorig} are the same, it only remains to show
\begin{equation}
\sum\limits_{m=0}^{M}\frac{t^m}{m!}B^{(f)}_{m}(x)\sum\limits_{j=0}^{M-m-1}
\frac{(-yt)^j}{j!} = \sum\limits_{m=0}^{M-1}\frac{t^m}{m!}B^{(f)}_{m}(x+y),
\end{equation}
which follows from \eqref{Bbinom} by changes of the order of summation. 

Next, we need to prove that the integral in \eqref{chidef} is convergent. 
Define the function \(h(t)\triangleq \exp(-xt)\,f(t).\)
Then,
\begin{align}
\chi^{(f)}_{M+1}(x,y) = & \int\limits_0^\infty \frac{dt}{t^{M+1}}\Bigl[\Bigl(h(t)-\sum\limits_{m=0}^M \frac{t^m}{m!}h^{(m)}(0)\Bigr)\frac{1}{e^{yt}-1} - \nonumber \\ & 
- \frac{t^{M+1}}{(M+1)!}\frac{h^{(M+1)}(0)}{yt}e^{-yt}
\Bigr].
\end{align}
It is now easy to see that the integrand in bounded near $t=0$ and exponentially small as $t\rightarrow \infty$ so that the integral is convergent.  

We now proceed to establish \eqref{shinproduct}. The left-hand side of \eqref{shinproduct} can be evaluated using \eqref{key}.
Fix $K$ and substitute \eqref{key} and \eqref{Psidef}. After cancelations, we obtain
\begin{gather}
\log\prod\limits_{k=1}^K\frac{\Gamma^{(f)}_M(w+ky)}{\Gamma^{(f)}_M(x+ky)}e^{\Psi^{(f)}_{M+1}(x,ky)-\Psi^{(f)}_{M+1}(w,ky)} = \frac{B^{(f)}_{M+1}(x)- B^{(f)}_{M+1}(w)}{y(M+1)!}\sum\limits_{k=1}^K \frac{1}{k} + \nonumber \\ +
\int\limits_0^\infty \frac{dt}{t^{M+1}}\left[\Bigl(f(t)(e^{-wt}-e^{-xt})+\sum\limits_{m=0}^M \frac{t^m}{m!}\bigl(B^{(f)}_{m}(x)-B^{(f)}_{m}(w)\bigr)\Bigr)\frac{1-e^{-ytK}}{e^{yt}-1}\right].
\end{gather}
Using the identity
\begin{equation}
\sum\limits_{k=1}^K \frac{1}{k} = \gamma+\log K+O(1/K),\;K\rightarrow\infty,
\end{equation}
and Frullani's integral in the form
\begin{equation}
\log(K) = \int\limits_0^\infty
\bigl(e^{-yt}-e^{-ytK}\bigr)\frac{dt}{t},
\end{equation}
it remains to show that in the limit of $K\rightarrow\infty$
\begin{gather}
\lim\limits_{K\rightarrow\infty}\int\limits_0^\infty \frac{dt}{t^{M+1}}\Bigl[\Bigl(f(t)(e^{-wt}-e^{-xt})+\sum\limits_{m=0}^M \frac{t^m}{m!}\bigl(B^{(f)}_{m}(x)-B^{(f)}_{m}(w)\bigr)\Bigr)\frac{1}{e^{yt}-1}
+\nonumber \\ + \frac{t^{M+1}}{(M+1)!}\frac{\bigl(B^{(f)}_{M+1}(x)- B^{(f)}_{M+1}(w)\bigr)}{yt}
\Bigr]e^{-ytK}=0. \label{Klim}
\end{gather}
Define the function \(h(t)\triangleq \exp(-wt)\,f(t).\)
To prove \eqref{Klim}, given the definition of $B^{(f)}_{m}(w)$
in \eqref{Bdefa}, it is sufficient to show that
\begin{equation}
\lim\limits_{K\rightarrow\infty}\int\limits_0^\infty \frac{dt}{t^{M+1}}\Bigl[\Bigl(h(t)-\sum\limits_{m=0}^M \frac{t^m}{m!}h^{(m)}(0)\Bigr)\frac{1}{e^{yt}-1}
- \frac{t^{M+1}}{(M+1)!}\frac{h^{(M+1)}(0)}{yt}
\Bigr]e^{-ytK}=0.
\end{equation}
It is elementary to check that the integrand is bounded for all $t\geq 0$ so that the result follows by dominated convergence.
\end{proof}

\begin{proof}[Proof of Theorem \ref{ShinFactor}]
Let $f(t)$ be defined by \eqref{fgdef} for some $g(t)$ of Ruijsenaars class.
Consider the quantity
\begin{equation}\label{Pdef}
P^{(f)}_{M+1}(w, y\,|\,a) \triangleq \log\Gamma^{(f)}_{M+1}\bigl(w\,|\,a, y\bigr) - \chi^{(f)}_{M+1}(w,y\,|\,a) - \log\Gamma^{(f)}_M(w\,|\,a),
\end{equation}
where \(a\triangleq (a_1\cdots a_M)\) as usual. By \eqref{key}, \eqref{fgdef}, \eqref{chidef}, we have the identity
\begin{align}
P^{(f)}_{M+1}(w, y\,|\,a) & = \int\limits_0^\infty \frac{dt}{t^{M+2}}\Bigl[
\sum\limits_{m=0}^{M-1} \frac{t^m}{m!}B^{(f)}_{M,m}(w\,|\,a)\frac{t}{1-e^{-yt}}
+ \nonumber \\ & + \frac{t^M}{M!}B^{(f)}_{M,M}(w\,|\,a)\frac{t}{e^{yt}-1} +
\frac{t^{M+1}}{M!}e^{-t}B^{(f)}_{M,M}(w\,|\,a) + \nonumber \\ & +
\frac{t^{M+1}}{y(M+1)!}e^{-yt}B^{(f)}_{M,M+1}(w\,|\,a) -
\sum\limits_{m=0}^{M} \frac{t^m}{m!}B^{(f)}_{M+1,m}(w\,|\,a,y) - \nonumber \\ & -
\frac{t^{M+1}}{(M+1)!}e^{-t}B^{(f)}_{M+1,M+1}(w\,|\,a,y)\Bigr]. \label{Pformula}
\end{align}
We now have two expressions for $\chi^{(f)}_{M+1}(w,y\,|\,a)$ given by \eqref{shinproduct} and \eqref{Pdef}. Hence, letting $y=a_{M+1},$ we obtain the identity
\begin{align}
\Gamma^{(f)}_{M+1}\bigl(w\,|\,a,a_{M+1}\bigr) = & \prod\limits_{k=1}^\infty \frac{\Gamma^{(f)}_M(w+ka_{M+1}\,|\,a)}{\Gamma^{(f)}_M(x+ka_{M+1}\,|\,a)}e^{\Psi^{(f)}_{M+1}(x,ka_{M+1}\,|\,a)-\Psi^{(f)}_{M+1}(w,ka_{M+1}\,|\,a)} \star \nonumber \\
& \star \exp\Bigl(\phi^{(f)}_{M+1}(w,x\,|\,a, a_{M+1})\Bigr)\Gamma^{(f)}_{M}(w\,|\,a),
\end{align}
where the quantity \(\phi^{(f)}_{M+1}(w,x\,|\,a, a_{M+1})\) is defined by
\begin{align}
\phi^{(f)}_{M+1}(w,x\,|\,a, a_{M+1}) \triangleq & P^{(f)}_{M+1}(w, a_{M+1}\,|\,a) +
\chi^{(f)}_{M+1}(x,a_{M+1}\,|\,a) + \nonumber \\ & +\frac{\gamma}{a_{M+1}(M+1)!}\bigl(B^{(f)}_{M,M+1}(w\,|\,a) - B^{(f)}_{M,M+1}(x\,|\,a)\bigr). \label{phiformula}
\end{align}
It remains to note that $\Psi^{(f)}_{M+1}(w,y\,|\,a)$ is a polynomial in $w$ of degree $M+1$ by construction, $P^{(f)}_{M+1}(w, y\,|\,a)$ is a polynomial in $w$ of degree $M+1$ by \eqref{Pformula}, and so is $\phi^{(f)}_{M+1}(w,x\,|\,a, a_{M+1})$ by \eqref{phiformula}.
\end{proof}

\section{Conclusions}
We advanced the general theory of Barnes beta probability distributions by showing that the scaling invariance, multiplication formula, and Shintani factorization of Barnes multiple gamma functions imply novel properties of the Barnes beta distributions. In particular, we derived a novel infinite product factorization of the Mellin transform (Barnes factorization) as a corollary of the Shintani factorization and used it to derive explicit formulas for integral moments and mass at 1. 

We considered the probability distribution underlying the Selberg integral (conjectured to be the law of the total mass of the limit lognormal stochastic measure on the unit interval) as the main area of applications. We used the multiplication formula to show that a certain combination of products of ratios of Barnes double gamma functions is the Mellin transform of a probability distribution that splits into a product of a lognormal and three Barnes beta distributions $\beta^{-1}_{2,2}.$ We used this result to give a new, purely probabilistic proof of the existence and structure of the Selberg integral distribution and then established involution invariance of its components by applying the general scaling invariance property of Barnes beta distributions. In particular, we gave a new probabilistic proof of the involution invariance of the corresponding Mellin transform that was first noted in \cite{FLDR}. It is interesting to point out that the Selberg integral distribution can be naturally thought of as consisting of two blocks of factors that are intrinsically different: a lognormal and three $\beta^{-1}_{2,2}$ on the one hand and a Fr\'echet on the other. The first block is involution invariant, whereas the other is not. Given the conjecture of \cite{FyoBou} that the Fr\'echet factor is the law of the total mass of the limit lognormal measure on the circle, we can naturally interpret the first block as being the superstructure that is needed to ``lift'' the law of this measure from the circle to the unit interval.

We considered the weak limit of the Selberg integral distribution that corresponds to the critical limit lognormal measure and described in detail the resulting probability distribution (conjectured to be the law of the derivative martingale). We noted that its Mellin transform can
be represented in the form of a product of ratios of Barnes $G$ functions
and factorized it into a product of a lognormal, \(\beta^{-1}_{2, 2}\), Pareto, and Fr\'echet distributions. The L\'evy density of \(-\log\beta_{2, 2}\) was shown to be related to the Laplace transform of the $C_2$ distribution, which allowed us to compute the cumulants of a related class
of \(-\log\beta_{2, 2}\) distributions in terms of the values of the Riemann xi functions at the integers.

We contributed to the probabilistic theory of the Riemann xi function. We showed that the Jacobi triple product has a probabilistic interpretation in terms of a limit of Barnes beta distributions and constructed a one-parameter family of
probability distributions in this limit. This construction resulted in a functional equation for entire functions expressing certain sums over the values of xi in terms of the Mellin transform of the limit distribution. In  particular, we expressed the xi function itself by taking the zero limit of the parameter of the limit distribution.

We have reviewed certain aspects of the general theory of Barnes multiple gamma functions that are relevant to Barnes beta distributions using the approach
(and normalization) of Ruijsenaars. In particular, we stated and proved a more general version of the Shintani identity than what has been done previously
and applied it to explain the origin of the infinite product factorization of
the Mellin transform of Barnes beta distributions.

Overall, our results suggest that there might be an interesting, not yet understood, connection between the Selberg integral distribution, especially in the critical case, and the Riemann xi. We have shown that the two objects have
Barnes beta distributions as their respective building blocks (\(\beta_{2, 2}\)
in the former case and a limit of \(\beta_{2M, 3M}\) in the latter) and so are naturally led to speculate that they might be related directly, perhaps involving the $G$ function. The relationship between the cumulants of  \(-\log\beta_{2, 2}\) in the critical case and values
of the xi function at the integers is a first step in this direction, which suggests that there might be a direct relationship between  \(-\log\beta_{2, 2}\) and $C_2$ or $S_2$ distributions and that xi might be somehow related
to the Mellin transform of \(-\log\beta_{2, 2}\).

\ACKNO{The author wishes to thank the organizers and participants of
the conference ``Branching Diffusions and Gaussian Free Fields in Physics, Probability and Number Theory'', Marseille, 2013 for 
drawing out interest to the critical limit lognormal measure. The author also wants to extend special gratitude to Y. V. Fyodorov, who made it possible
for the author to attend the event and brought reference \cite{FyoBou} to our attention. Finally, the author wants to thank the anonymous referee for the careful reading of the paper and helpful suggestions.
}


\begin{thebibliography}{50}

\bibitem{Genesis} E. W. Barnes, The genesis of the double gamma functions, \emph{Proc. London Math. Soc.} \textbf{ s1-31}, (1899),
358--381.

\bibitem{mBarnes} E. W. Barnes, On the theory of the multiple gamma
function. \emph{Trans. Camb. Philos. Soc.} \textbf{19}, (1904),
374--425.

\bibitem{barraletal} J. Barral, A. Kupiainen, M. Nikula, E. Saksman, C. Webb, Basic properties of critical lognormal multiplicative chaos, preprint (2013),
\url{http://arxiv.org/abs/1303.4548}.

\bibitem{BiaPitYor} P. Biane, J. Pitman, M. Yor, Probability laws
related to the Jacobi theta and Riemann zeta functions, and brownian
excursions. \emph{Bulletin of the American Mathematical Society}
\textbf{38}, (2001), 435--465.

\bibitem{Char} T. M. Bisgaard, Z. Sasvari,
\emph{Characteristic Functions and Moment Sequences}, Nova Science
Publishers, Huntington, 2000.

\bibitem{Bressoud} D. M. Bressoud, Combinatorial analysis, in: \emph{NIST Handbook of Mathematical Functions}, NIST and Cambridge University Press, (2010), 617--636. 

\bibitem{ChaLet}  J-F. Chamayou, G. Letac, Additive properties of the Dufresne laws and their multivariate extension. \emph{J. Theoret. Probab.} \textbf{12},
(1999), 1045--1066.

\bibitem{Duf10} D. Dufresne, $G$ distributions and the beta-gamma
algebra. \emph{Elect. J. Probab.} \textbf{15}, (2010), 2163--2199.

\bibitem{dupluntieratal} B. Duplantier, R. Rhodes, S. Sheffield, V. Vargas,
Critical gaussian multiplicative chaos: convergence of the derivative martingale, preprint (2012), \url{http://arxiv.org/abs/1206.1671}.

\bibitem{ForresterBook} P. J. Forrester,  \emph{Log-Gases and Random
Matrices}, Princeton University Press, Princeton, 2010.

\bibitem{FyoBou} Y. V. Fyodorov and J. P. Bouchaud: Freezing and extreme-value statistics in a random energy model with logarithmically correlated potential.
\emph{J. Phys. A, Math Theor.} \textbf{41}, (2008), 372001.

\bibitem{YK} Y. V. Fyodorov and J. P. Keating: Freezing transitions and extreme values: random matrix theory, $\zeta(1/2+it),$ and disordered landscapes. \emph{Phil. Trans. R. Soc. A} \textbf{372}, (2014), 20120503. 

\bibitem{FLDR} Y. V. Fyodorov, P. Le Doussal, A. Rosso, Statistical mechanics of logarithmic REM: duality, freezing and extreme value statistics of 1/f noises
generated by gaussian free fields. \emph{J. Stat. Mech.}, (2009), P10005.

\bibitem{YM} Y. V. Fyodorov and A. D. Mirlin: Level-to-level 
fluctuations in the inverse participation ratio in finite quasi 1D disordered systems. \emph{Phys. Rev. Lett.} \textbf{71}, (1993), 412--415.

\bibitem{HacKuz} D. Hackmann, A. Kuznetsov, A note on the series representation for the density of the supremum of a stable process, \emph{Elect. Comm. in Probab.} \textbf{18}, no. 42, (2013), 1--5.

\bibitem{HubKuz} F. Hubalek, A. Kuznetsov, A convergent series representation for the density of the supremum of a stable process, \emph{Elect. Comm. in Probab.} \textbf{16}, (2011), 84--95.

\bibitem{Jacod} J. Jacod, E. Kowalski, A. Nikeghbali,
Mod-gaussian convergence: new limit theorems in probability and
number theory. \emph{Forum Mathematicum} \textbf{23}, (2011),
835--873.

\bibitem{KataOhts} K. Katayama, M. Ohtsuki, On the multiple
gamma-functions. \emph{Tokyo J. Math.} \textbf{21}, (1998),
159--182.

\bibitem{KS} J.P. Keating, N.C. Snaith (2000), Random matrix theory and
$\zeta(1/2+it),$ \emph{Comm. Math. Phys.} \textbf{214}: 57--89.

\bibitem{Kuz} A. Kuznetsov, On extrema of stable processes. \emph{Ann. Probab.} \textbf{39}, (2011), 1027--1060.

\bibitem{LagRains} J. Lagarias, E. Rains, On a two-variable zeta function for number fields. \emph{Ann. Inst. Fourier, Grenoble} \textbf{53}, (2003),
1--68.

\bibitem{NikYor} A. Nikeghbali, M. Yor, The Barnes G function
and its relations with sums and products of generalized gamma
convolutions variables. \emph{Elect. Comm. in Prob.} \textbf{14},
(2009), 396--411.

\bibitem{Me4} D. Ostrovsky, Mellin transform of the limit
lognormal distribution. \emph{Comm. Math. Phys.} \textbf{288},
(2009), 287--310.

\bibitem{Me} D. Ostrovsky, Selberg integral as a meromorphic
function. \emph{Int. Math. Res. Notices}, \textbf{17}, (2013), 3988--4028.

\bibitem{Me13} D. Ostrovsky, Theory of Barnes beta distributions. \emph{Elect. Comm. in Prob.}, \textbf{18}, no. 59, (2013), 1--16.

\bibitem{PitYor} J. Pitman, M. Yor, Infinitely divisible laws
associated with hyperbolic functions. \emph{Canad. J. Math.}
\textbf{55}, (2003), 292--330.

\bibitem{Ruij} S. N. M. Ruijsenaars, On Barnes' multiple zeta and
gamma functions. \emph{Advances in Mathematics} \textbf{156},
(2000), 107--132.

\bibitem{Selberg} A. Selberg, Remarks on a multiple integral. \emph{Norske Mat.
Tidsskr.} \textbf{26}, (1944), 71--78.

\bibitem{Shintani} T. Shintani, A proof of the classical
Kronecker limit formula. \emph{Tokyo J. Math.} \textbf{3}, (1980),
191--199.

\bibitem{SriCho} H. M. Srivastava, J. Choi, \emph{Zeta and q-Zeta Functions and Associated Series and Integrals}, Elsevier, Amsterdam, 2012.

\bibitem{WhitWat}  E. T. Whittaker, G. N. Watson, \emph{A
Course of Modern Analysis}, 4th ed., Cambridge University Press,
London, 1958.
\end{thebibliography}
\end{document}